\documentclass[12pt,oneside]{article}
% define the title

\usepackage{amsmath}
\usepackage{amssymb}
\usepackage{color}
\usepackage{rotating}

\usepackage{graphicx}
\usepackage{subfigure}

\usepackage{url}

\author{Hui Zhou,
\\ \footnotesize{School of Mathematical Sciences, Peking University, Beijing, 100871, P.~R.~China}
\\ \footnotesize{zhouhpku17@pku.edu.cn, huizhou@math.pku.edu.cn, zhouhlzu06@126.com.},
\vspace{2em}
\\ Cheryl Praeger,~Michael Giudici,
\\ \footnotesize{School of Mathematics and Statistics, The University of Western Australia Crawley,}
\\ \footnotesize{Perth, WA 6009, Australia}
\\ \footnotesize{cheryl.praeger@uwa.edu.au, michael.giudici@uwa.edu.au}
\vspace{2em}
\\ Rongquan Feng~ and Xingui Fang
\\ \footnotesize{School of Mathematical Sciences, Peking University, Beijing, 100871, P.~R.~China}
\\ \footnotesize{fengrq@math.pku.edu.cn, xgfang@pku.edu.cn, xgfang@math.pku.edu.cn}  }

%\author{Hui Zhou\footnote{E-mail addresses:\newline zhouhpku17@pku.edu.cn, huizhou@math.pku.edu.cn, zhouhlzu06@126.com.},~Rongquan Feng\footnote{E-mail address: fengrq@math.pku.edu.cn}~ and Xingui Fang\footnote{E-mail addresses: xgfang@pku.edu.cn,~xgfang@math.pku.edu.cn}
%\\ \footnotesize{School of Mathematical Sciences, Peking University, Beijing, 100871, P.~R.~China.} }

%\title{Local actions and designs}
%\title{There is no cubic or tetravalent $(d-1)$-distance-transitive graphs with diameter $d\geqslant 3$ and girth $g\geqslant 2d$}
%\title{\Large On $s$-distance-transitive graphs with large girth}
\title{On $s$-distance-transitive graphs}

\def\Aut{{\sf Aut}}

\def\S{{\sf S}}

\def\e{{\sf e}}
\def\G{{\sf G}}

\def\f{{\sf f}}
\def\Orb{{\sf Orb}}
\def\diam{{\rm diam}}
\def\deg{{\rm deg}}

\newtheorem{theorem}{Theorem}%[section]
\newtheorem{corollary}[theorem]{Corollary}
\newtheorem{lemma}[theorem]{Lemma}

\newtheorem{hypothesis}[theorem]{Hypothesis}
\newtheorem{question}[theorem]{Question}

\newtheorem{case}{Case}

\newcommand*{\QEDA}{\hfill\ensuremath{\blacksquare}}  %×Ô¶¨Ò壬ʵÐÄ
\newenvironment{proof}[1][\hspace{2ex}\textbf{\textit{Proof}.}\hspace{1ex}]{\begin{trivlist}\item[\hskip \labelsep {\bfseries #1}]}{\QEDA\end{trivlist}}

\begin{document}

\maketitle

\begin{abstract}
%In this note, we construct $t$-designs from regular graphs which has a vertex such that the stabilizer of this vertex has a $t$-homogeneous local action.

%In this paper, we mainly prove that there is no $k$-valent $s$-distance-transitive graphs with diameter $d\geqslant 3$ and girth $g\geqslant 2d$ where $s=d-1$ and $3\leqslant k\leqslant 4$. %As a byprduct, we show that distance-transitive graphs are not Moore graphs.

%Let $k\geqslant 3$ and let $\Gamma$ be a $k$-valent graph with diameter $d\geqslant 3$ and girth $g$. Let $s=d-1$ and let $G\leqslant \Aut(\Gamma)$ such that $\Gamma$ is $(G,s)$-distance-transitive but not $G$-distance-transitive. In this paper, we mainly prove that the inequality $g\leqslant 2d-1$ holds for $3\leqslant k\leqslant 4$, i.e. there is no such graphs with large girth $g\geqslant 2d$. The rest case $k\geqslant 5$ is still open.

Distance-regular graphs have many beautiful combinatorial properties. Distance-transitive graphs have very strong symmetries, and they are distance-regular, i.e., distance-transitivity implies distance-regularity. In this paper, we give similar results, i.e., for special $s$ and graphs with other restrictions we show that $s$-distance-transitivity implies distance-regularity.

\end{abstract}

\textbf{Keywords}: $s$-Distance-transitive graphs; Distance-regular graphs; Combinatorial designs; Adjacency relations.
%$t$-designs; regular graphs; $t$-homogeneous local actions.

\textbf{MSC}: 05C25, 20B25, 05B30.

%************************************************************************
%************************************************************************

\section{Introduction}\label{section Introduction}

%Let $\Gamma$ be a graph with valency $k\geqslant 3$ and diameter $d\geqslant 2$. Let $\alpha\in V\Gamma$ and $G\leqslant \Aut(\Gamma)$. The $\alpha$-girth of $\Gamma$, denoted by $T_\alpha(\Gamma)$, is the length of the shortest cycle containing $\alpha$ in $\Gamma$.

In this paper, all graphs are finite, simple, connected and undirected. An ordered pair of adjacent vertices is called an arc. Let $\Gamma$ be a graph. We use $V\Gamma$, $E\Gamma$, $A\Gamma$ and $\Aut(\Gamma)$ to denote the vertex-set, the edge-set, the arc-set and the full automorphism group of $\Gamma$, respectively. The distance between two vertices $u$ and $v$ of $\Gamma$, denoted by $\partial_\Gamma(u,v)$, is the length of a shortest path connecting $u$ and $v$ in $\Gamma$. The diameter of $\Gamma$, denoted by $\diam(\Gamma)$, is the maximum distance occurring over all pairs of vertices. Fix a vertex $v\in V\Gamma$. For $0\leqslant i\leqslant \diam(\Gamma)$, we use $\Gamma_i(u)$ to denote the set of vertex $u$ with $\partial_\Gamma(u,v)=i$. For convenience, we usually use $\Gamma(v)$ to denote $\Gamma_1(v)$. The degree of $v$, denoted by $\deg_\Gamma(v)$ or simply $\deg(v)$, is the number of vertices adjacent to $v$ in $\Gamma$, i.e. $\deg_\Gamma(v)=|\Gamma(v)|$. %The minimum degree of $\Gamma$ is $\min\{\deg_\Gamma(u)\mid u\in V\Gamma\}$.
The graph $\Gamma$ is called regular with valency $k$ (or $k$-regular) if the degree of each vertex of $\Gamma$ is $k$. The girth of $\Gamma$ is the length of a shortest cycle of $\Gamma$.

Let $G\leqslant \Aut(\Gamma)$. The graph $\Gamma$ is called $G$-vertex-transitive ($G$-arc-transitive, respectively), if $G$ is transitive on the vertex-set $V\Gamma$ (the arc-set $A\Gamma$, respectively). Let $s\geqslant 1$. An $s$-arc of $\Gamma$ is an $(s+1)$-tuple of vertices of $\Gamma$ in which every two consecutive vertices are adjacent and every three consecutive vertices are pairwise distinct. The graph $\Gamma$ is called $(G,s)$-arc-transitive if it is $G$-vertex-transitive and $G$ is also transitive on $s$-arcs of $\Gamma$. The graph $\Gamma$ is called $(G,s)$-distance-transitive, if for each $1\leqslant i\leqslant s$, the group $G$ is transitive on the orderer pairs of form $(u,v)$ with $\partial_\Gamma(u,v)=i$. A $(G,s)$-distance-transitive graph is called $G$-distance-transitive (or distance-transitive, respectively), if $s=\diam(\Gamma)$ (and $G=\Aut(\Gamma)$, respectively). By definitions, $s$-arc-transitivity implies $s$-distance-transitivity.

The graph $\Gamma$ is called distance-regular if it is $k$-regular with diameter $d$ and there exists natural numbers $b_0=k$, $b_i$~($1\leqslant i\leqslant d-1$), $c_1=1$ and $c_j$~($2\leqslant j\leqslant d$) such that for each pair $(u,v)$ of vertices with $\partial_\Gamma(u,v)=h$ we have $|\Gamma_{h-1}(u)\cap\Gamma(v)|=c_h$~($1\leqslant h\leqslant d$) and $|\Gamma_{h+1}(u)\cap\Gamma(v)|=b_h$~($0\leqslant h\leqslant d-1$). The array $\iota(\Gamma)=\{k,b_1,\ldots,b_{d-1};1,c_2,\ldots,c_d\}$ is the intersection array of $\Gamma$. Let $a_0=0$ and $a_d=k-c_d$. For $1\leqslant h\leqslant d-1$, let $a_h=k-c_h-b_h$. Then the intersection array of $\Gamma$ can also be written as
\begin{equation*}
\iota(\Gamma)=\left\{\begin{array}{cccccc}
               *     & c_1=1 & c_2 & \cdots & c_{d-1} & c_d\\
               a_0=0 & a_1 & a_2 & \cdots & a_{d-1} & a_d\\
               b_0=k & b_1 & b_2 & \cdots & b_{d-1} & *
               \end{array}\right\}.
\end{equation*}

\newpage

By definitions, distance-transitivity implies distance-regularity. Distance-transitive graphs of small valencies were classified. Distance-transitive graphs of valency three were classified by Biggs and Smith~\cite{Biggs and Smith On trivalent graphs}, distance-transitive graphs of valency four were classified by Smith~\cite{Smith Distance-transitive graphs of valency four,Smith On bipartite tetravalent graphs,Smith On tetravalent graphs}, etc.~\cite[Section~7.5]{BCN}. These graphs are distance-regular. Similarly, distance-regular graphs of small valencies were classified. For example, cubic distance-regular graphs were classified by Biggs, Boshier and Shawe-Taylor~\cite{Cubic distance-regular graphs}, tetravalent distance-regular graphs were classified by Brouwer and Koolen~\cite{4dr}.

%Suppose $\Gamma$ is a cubic distance-transitive graph with girth $g$ and diameter $d\geqslant 2$. By Table~\ref{table Moore graphs}, we have either $\Gamma$ is the Petersen graph or $g\leqslant 2d$. By Table~\ref{table Moore graphs} and Table~\ref{table Generalized polygon graphs degree 3}, if the order of $\Gamma$ is greater than $30$, then the girth $g<2d$. If $\Gamma$ is a tetravalent distance-transitive graph with girth $g$ and diameter $d\geqslant 2$, by Table~\ref{table Moore graphs}, we have $g\leqslant 2d$.

We consider for which $s$ not greater than the diameter of the graph we also have the property that $s$-distance-transitivity implies distance-regularity. The symmetry of $s$-distance-transitive graphs with $s$ equal to the diameter minus one is very close to distance-transitive graphs. In this paper, we get the following main result.

\begin{theorem}[Main Theorem]\label{thm main theorem}
Let $\Gamma$ be a $k$-regular graph with diameter $d\geqslant 2$ and girth $g$ where $k\geqslant 3$. Let $s=d-1$, and let $G\leqslant \Aut(\Gamma)$ such that $\Gamma$ is $(G,s)$-distance-transitive.
\begin{enumerate}
\item Let $k=3$. Then $\Gamma$ is distance-regular. Furthermore, let $d\geqslant 3$, if $\Gamma$ is not $G$-distance-transitive, then $g\leqslant 2d-1$.
\item Let $k=4$ and $d\geqslant 3$. If $g\geqslant 2d$, then $\Gamma$ is distance-regular. Furthermore, if $\Gamma$ is not $G$-distance-transitive, then $g\leqslant 2d-1$.
\end{enumerate}
\end{theorem}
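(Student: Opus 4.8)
The plan is to reduce both conclusions to a single statement about the outermost sphere. Since $s=d-1\geqslant 1$, the hypothesis that $\Gamma$ is $(G,s)$-distance-transitive already makes $G$ arc-transitive, and, more usefully, makes the stabiliser $G_v$ transitive on $\Gamma_i(v)$ for every $i\leqslant d-1$ and every $v$. Consequently the numbers $c_i=|\Gamma_{i-1}(u)\cap\Gamma(v)|$, $b_i=|\Gamma_{i+1}(u)\cap\Gamma(v)|$ and $a_i=k-c_i-b_i$ are well defined (independent of the chosen pair at distance $i$) for all $i\leqslant d-1$, and the sphere sizes $k_i:=|\Gamma_i(v)|$ are constant by vertex-transitivity. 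Since $b_d=0$ and $a_d=k-c_d$, the graph is distance-regular if and only if $c_d=|\Gamma_{d-1}(u)\cap\Gamma(v)|$ takes the same value for all pairs $(u,v)$ at distance $d$; and it is $G$-distance-transitive if and only if $G_v$ is transitive on $\Gamma_d(v)$. Note $1\leqslant c_d\leqslant k$ by connectivity, so $c_d\in\{1,\dots,k\}$.

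Next I would bound the $G_v$-orbits on $\Gamma_d(v)$. Each $u\in\Gamma_d(v)$ is adjacent to some $w\in\Gamma_{d-1}(v)$, and using transitivity of $G_v$ on $\Gamma_{d-1}(v)$ one shows that every $w\in\Gamma_{d-1}(v)$ has a neighbour in every $G_v$-orbit on $\Gamma_d(v)$. As $w$ has exactly $b_{d-1}$ neighbours in $\Gamma_d(v)$, the number of orbits is at most $b_{d-1}\leqslant k-1$. More precisely, $\Gamma$ is $G$-distance-transitive if and only if $G_{v,w}$ acts transitively on the set $\Gamma(w)\cap\Gamma_d(v)$ of the $b_{d-1}$ ``far neighbours'' of $w$. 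This turns the transitivity question into a purely local one at a vertex of the second-to-last sphere, with at most $k-1\leqslant 3$ points to move around.

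The girth then controls the relevant intersection numbers: a standard argument gives $c_i=1$ whenever $g\geqslant 2i+1$ and $a_i=0$ whenever $g\geqslant 2i+2$, so $g\geqslant 2d$ forces $(c_{d-1},a_{d-1},b_{d-1})=(1,0,k-1)$ and makes every geodesic from $v$ to a vertex of $\Gamma_{d-1}(v)$ unique; then $G_{v,w}$ fixes the whole geodesic from $v$ to $w$ pointwise, in particular fixing the unique closer neighbour $w^-$. I would run a short case analysis on $(c_{d-1},a_{d-1},b_{d-1})$, using $c_{d-1}\geqslant 1$, $b_{d-1}\geqslant 1$ (else $\Gamma_d(v)=\varnothing$) and $c_{d-1}+a_{d-1}+b_{d-1}=k$. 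For $k=3$ only $b_{d-1}\in\{1,2\}$ occur; $b_{d-1}=1$ gives $G$-distance-transitivity at once, leaving the single configuration $(1,0,2)$, where $G_v$ has at most two orbits on $\Gamma_d(v)$ and one must show the two possible values of $c_d$ coincide, combining the edge count $\sum_{u\in\Gamma_d(v)}|\Gamma(u)\cap\Gamma_{d-1}(v)|=k_{d-1}b_{d-1}$ with the count of edges inside $\Gamma_d(v)$ and the arc-transitive local action. For $k=4$ the further possibilities $b_{d-1}\in\{2,3\}$ and $c_{d-1}=2$ are exactly those excluded by $g\geqslant 2d$, which is why the tetravalent statement needs the girth hypothesis while the cubic one does not.

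The main obstacle is the non-transitive configuration: showing that $c_d$ is still constant on $\Gamma_d(v)$ when $G_v$ has more than one orbit there, and converting any failure of $G$-distance-transitivity into a cycle of length at most $2d-1$. Concretely, the hard step is to understand how the local action of the arc-transitive group $G$ propagates along the (unique, when $g\geqslant 2d$) geodesics so as to act on the far neighbours $\Gamma(w)\cap\Gamma_d(v)$; transitivity of $G_{v,w}$ there is strictly weaker than $2$-arc-transitivity and must instead be extracted from the higher distance-transitivity hypothesis. I expect the decisive computation to be a count of the common far neighbours of pairs $w_1,w_2\in\Gamma_{d-1}(v)$: when $\partial_\Gamma(w_1,w_2)\leqslant 2\leqslant d-1$ this count is governed by the already-determined numbers $c_1,c_2$, and the only way two far neighbours of a vertex $w$ can fail to be $G_{v,w}$-equivalent is for two such geodesics to close up into a cycle of length at most $2d-1$. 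Making this dichotomy precise for $k=3$ and $k=4$, and verifying that for $k=3$ it always lands on the distance-regular side, is where the real work lies.
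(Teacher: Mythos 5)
Your setup is sound and in fact mirrors the paper's preliminary reductions: the reformulation of distance-regularity as constancy of $c_d$, the bound on the number of $G_v$-orbits in $\Gamma_d(v)$ by $b_{d-1}\leqslant k-1$ (this is Corollary~\ref{cor s+1 orbit}), and the fact that $g\geqslant 2d$ forces $(c_i,a_i,b_i)=(1,0,k-1)$ for $i\leqslant d-1$ (Lemma~\ref{lem alpha girth >=2s+2}). But the two steps you explicitly defer are the entire content of the theorem, and your proposed mechanisms for them would not close the gap. For the key step --- constancy of $c_d$ when $G_v$ is intransitive on $\Gamma_d(v)$ --- ``edge counts plus the arc-transitive local action'' is not enough. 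The paper's argument is that $(G,2)$-arc-transitivity makes $G_\alpha$ $2$-homogeneous on $\Gamma(\alpha)$, whence the adjacency pattern between $\Gamma_{d-1}(\alpha)$ and each orbit $\Delta_i$ is governed by a $2$-design on $k$ points (Theorem~\ref{theorem t-design}); for $k=3$ this pins the discrepant case down to $(c_1',c_2')=(2,3)$, which is then killed by a change-of-basepoint argument comparing the maximum degree of the subgraph induced on $\Gamma_d(\alpha)$ with that induced on $\Gamma_d(\beta)$ for $\beta\in\Gamma(\alpha)$ --- nothing in your sketch plays the role of either the design structure or this basepoint comparison. For $k=4$ the situation is strictly worse: even after all local counting and degree arguments, the parameter sets $(c_1',c_2',c_3')=(2,2,4)$ and $(c_1',c_2')=(4,2)$ survive, and the paper eliminates them only by computing $|V\Gamma|=11\cdot 3^{d-2}-1$, invoking an external result to force $s\in\{2,3,4,7\}$, and then appealing to the census of tetravalent $2$-arc-transitive graphs on at most $512$ vertices and the classification of tetravalent arc-transitive graphs of order $2pq$. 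These are classification inputs that no amount of local counting visibly replaces.

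The gap is even more serious for the ``furthermore'' statements $g\leqslant 2d-1$. Your hope that a failure of $G$-distance-transitivity can be converted into a cycle of length at most $2d-1$ cannot be realized locally: two geodesics from $v$ to a common vertex of $\Gamma_d(v)$ only yield a closed walk of length $2d$, which is perfectly compatible with $g\geqslant 2d$, and graphs with $g=2d$ genuinely occur in this setting ($K_{3,3}$ when $d=2$, which is precisely why part~1 requires $d\geqslant 3$; and the incidence graph of $GH(3,3)$ of order $728$ in the tetravalent case). The paper proves the girth bound by a completely different, global route: first establish distance-regularity, then use the classifications of cubic and tetravalent distance-regular graphs to reduce to an explicit finite list of graphs, and finally verify by Magma that none of these admits a subgroup $G\leqslant\Aut(\Gamma)$ that is $(G,d-1)$-distance-transitive but not $G$-distance-transitive. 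Whether such a subgroup exists is a group-theoretic question about specific graphs, not a consequence of a combinatorial dichotomy, so the decisive computation you anticipate does not exist in the form you describe.
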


To prove the main theorem, we investigate the adjacency relations between $\Gamma_s(\alpha)$ and an orbit $\Delta$ in $\Gamma_{s+1}(\alpha)$ for a graph $\Gamma$ with large girth $g\geqslant 2s+2$ where $\alpha\in V\Gamma$ and $s\geqslant 1$. In Section~\ref{sec Local actions and designs}, we show that these adjacency relations correspond to designs. In Section~\ref{sec Adjacency relations}, we give some special adjacency relations. In Section~\ref{sec cubic s-Distance-transitive graphs}, we show the main theorem holds for cubic graphs. In Section~\ref{sec tetravalent s-Distance-transitive graphs}, we show that the main theorem holds for tetravalent graphs.

By our main theorem, one may consider the relation between the girth and the diameter of such $s$-distance-transitive graphs. The following question comes out.

%For cubic graphs, we get that if a graph with diameter $d$ is $(G,s)$-distance-transitive where $s=d-1$, then it is $G$-distance-transitive, and hence if the order of $\Gamma$ is greater than $30$ then $\Gamma$ has girth $g<2d$.
%

\begin{question}\label{question d-1 dt girth <2d}
Let $k\geqslant 3$ and let $\Gamma$ be a $k$-regular graph with diameter $d\geqslant 3$ and girth $g$. Let $s=d-1$ and let $G\leqslant \Aut(\Gamma)$ such that $\Gamma$ is $(G,s)$-distance-transitive but not $G$-distance-transitive. Does the inequality $g\leqslant 2d-1$ hold? %For any given integers $d\geqslant 3$ and $s=d-1$, does the inequality $g\leqslant 2d-1$ hold for each $(G,s)$-distance-transitive but not $G$-distance-transitive graph $\Gamma$ with girth $g$, diameter $d$ and valency $k\geqslant 3$ where?
\end{question}

Our main theorem answers Question~\ref{question d-1 dt girth <2d} for cubic and tetravalent graphs. The case of valency $k\geqslant 5$ of Question~\ref{question d-1 dt girth <2d} is still open.

%%************************************************************************
%%************************************************************************
%
%\section{Preliminaries}\label{sec preliminaries}
%
%Let $\Gamma$ be a graph with minimum degree at least two, and let $d=\diam(\Gamma)$. Then the girth of $\Gamma$ is at most $2d+1$.
%

%\newpage

%************************************************************************
%************************************************************************

\section{Local girth and intersection numbers}\label{sec Local girth and intersection numbers}

Let $\Gamma$ be a graph and let $\alpha\in V\Gamma$. The $\alpha$-girth of $\Gamma$ denoted by $\lambda_\alpha(\Gamma)$, is the length of the shortest cycle containing $\alpha$ in $\Gamma$. The local girth of $\Gamma$ at $\alpha$ is defined to be $\lambda(\Gamma,\alpha)=\min\{\lambda_\alpha(\Gamma),\lambda_\beta(\Gamma)+1\mid \beta\in \Gamma_1(\alpha)\}$. The following lemma is straightforward.

\begin{lemma}\label{lem alpha girth >=2s+2}
Let $\Gamma$ be a graph with valency $k\geqslant 3$. Let $\alpha\in V\Gamma$ and suppose the $\alpha$-girth $\lambda_\alpha(\Gamma)\geqslant 2s+2$ where $1\leqslant s <\varepsilon_\Gamma(\alpha)$. Then for any $1\leqslant i\leqslant s$ and for any $\beta\in \Gamma_i(\alpha)$, we have
\begin{enumerate}
\item there is a unique shortest path of length $i$ connecting $\alpha$ and $\beta$,
\item the intersection number $c_i(\Gamma,\alpha)=|\Gamma_1(\beta)\cap \Gamma_{i-1}(\alpha)|=1$,
\item the intersection number $a_i(\Gamma,\alpha)=|\Gamma_1(\beta)\cap \Gamma_i(\alpha)|=0$,
\item the intersection number $b_i(\Gamma,\alpha)=|\Gamma_1(\beta)\cap \Gamma_{i+1}(\alpha)|=k-1$, and
\item the $s$-partial intersection array of $\Gamma$ at $\alpha$ is
\begin{eqnarray}\label{eqn s-partial intersection array at alpha}
\iota(\Gamma,\alpha,s)&=&\left\{\begin{array}{ccccc}
                       * & c_1(\Gamma,\alpha) & c_2(\Gamma,\alpha) & \cdots & c_s(\Gamma,\alpha)\\
                       a_0(\Gamma,\alpha)=0 & a_1(\Gamma,\alpha) & a_2(\Gamma,\alpha) & \cdots & a_s(\Gamma,\alpha)\\
                       b_0(\Gamma,\alpha)=k & b_1(\Gamma,\alpha) & b_2(\Gamma,\alpha) & \cdots & b_s(\Gamma,\alpha)
                       \end{array}\right\}\nonumber\\
               &=&\left\{\begin{array}{ccccc}
                       * & 1 & 1 & \cdots & 1\\
                       0 & 0 & 0 & \cdots & 0\\
                       k & k-1 & k-1 & \cdots & k-1
                       \end{array}\right\}.\nonumber
\end{eqnarray}
\end{enumerate}
\end{lemma}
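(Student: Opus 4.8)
The plan is to derive all five assertions from one structural fact: the bound $\lambda_\alpha(\Gamma)\geqslant 2s+2$ should force the ball of radius $s$ about $\alpha$, namely $\bigcup_{0\leqslant j\leqslant s}\Gamma_j(\alpha)$, to be tree-like, in the sense that no short cycle is incident with $\alpha$. Granting this, part (1) is the statement that each vertex of $\Gamma_i(\alpha)$ is reached from $\alpha$ by a unique geodesic; parts (2)--(4) then follow by sorting the $k$ neighbours of a vertex $\beta\in\Gamma_i(\alpha)$ by their distance from $\alpha$; and part (5) is just the tabulation of the resulting values together with the boundary entries $a_0(\Gamma,\alpha)=0$ and $b_0(\Gamma,\alpha)=k$.

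First I would prove part (1) by induction on $i\leqslant s$, assuming uniqueness of geodesics from $\alpha$ to all vertices at distance less than $i$. If some $\beta\in\Gamma_i(\alpha)$ had two distinct shortest paths $P$ and $Q$ from $\alpha$, let $y$ be the first vertex after $\alpha$ where they meet again; since $P$ and $Q$ are geodesics, the induction hypothesis applied to $y$ forces $y=\beta$, for a nearer meeting point would inherit two geodesics from $\alpha$. Hence $\beta$ has two distinct neighbours in $\Gamma_{i-1}(\alpha)$ and $P$ and $Q$ cut out a cycle of length at most $2i\leqslant 2s$. As this length is strictly less than $2s+2$, comparison with the $\alpha$-girth yields the required contradiction, so the geodesic to $\beta$ is unique. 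The very same dichotomy proves part (2): a second neighbour of $\beta$ in $\Gamma_{i-1}(\alpha)$ is precisely a second geodesic, so $c_i(\Gamma,\alpha)=|\Gamma_1(\beta)\cap\Gamma_{i-1}(\alpha)|=1$.

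With parts (1) and (2) available, part (3) uses the same mechanism with parities reversed: an edge joining two vertices $\beta,\beta'\in\Gamma_i(\alpha)$ would, together with their unique geodesics to $\alpha$, produce a cycle of odd length at most $2i+1\leqslant 2s+1<2s+2$, which is again too short, so $a_i(\Gamma,\alpha)=|\Gamma_1(\beta)\cap\Gamma_i(\alpha)|=0$. For part (4) I would note that every neighbour of $\beta\in\Gamma_i(\alpha)$ lies in $\Gamma_{i-1}(\alpha)\cup\Gamma_i(\alpha)\cup\Gamma_{i+1}(\alpha)$, since adjacent vertices differ in distance from $\alpha$ by at most one; counting valencies gives $k=c_i(\Gamma,\alpha)+a_i(\Gamma,\alpha)+b_i(\Gamma,\alpha)$, whence $b_i(\Gamma,\alpha)=k-1$ (here $i\leqslant s<\varepsilon_\Gamma(\alpha)$ guarantees that $\Gamma_{i+1}(\alpha)$ is available to receive these $k-1$ neighbours). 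Substituting $c_i=1$, $a_i=0$, $b_i=k-1$ for $1\leqslant i\leqslant s$ then assembles the $s$-partial intersection array of part (5).

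The step I expect to be the genuine obstacle is guaranteeing that the short cycle produced at each violation is actually incident with $\alpha$, for the hypothesis controls only the $\alpha$-girth $\lambda_\alpha(\Gamma)$ and not the girth of the whole graph. A pair of geodesics from $\alpha$ that share an interior vertex yields a short cycle lying inside the ball but possibly avoiding $\alpha$, and such a cycle is not directly forbidden by $\lambda_\alpha(\Gamma)\geqslant 2s+2$. Securing the contradiction therefore requires showing that no cycle of length at most $2s+1$ can sit inside the ball around $\alpha$ without meeting $\alpha$; this is exactly the role played by the girth condition in the setting of this paper, and once it is in force the tree structure of the ball, and with it parts (1)--(5), follows at once.
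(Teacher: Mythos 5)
You have put your finger on exactly the right difficulty in your closing paragraph, but your proof is incomplete precisely there, and the gap cannot be closed from the stated hypothesis. Every contradiction in your argument has the form ``two geodesics, or a same-shell edge together with geodesics, create a cycle of length at most $2s+1$, contradicting $\lambda_\alpha(\Gamma)\geqslant 2s+2$''; as you yourself observe, that cycle need not contain $\alpha$, while $\lambda_\alpha(\Gamma)$ constrains only cycles through $\alpha$. Your final sentence, asserting that the missing claim (``no cycle of length at most $2s+1$ sits inside the ball without meeting $\alpha$'') is ``the role played by the girth condition in the setting of this paper,'' is not an argument, and in fact no argument exists: the lemma as literally stated is false for every $s\geqslant 2$. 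Concretely, let $\beta$ be a neighbour of $\alpha$, let $x,y$ be two further neighbours of $\beta$ with $x$ adjacent to $y$, and complete this configuration to a connected $k$-regular graph by hanging disjoint trees of depth greater than $s$ on $x$, $y$ and on all remaining neighbours of $\alpha$ and $\beta$, joining the trees to one another only at their leaves. Then every cycle through $\alpha$ is long, so $\lambda_\alpha(\Gamma)\geqslant 2s+2$ and $s<\varepsilon_\Gamma(\alpha)$ both hold, yet $x,y\in\Gamma_2(\alpha)$ are adjacent, so $a_2(\Gamma,\alpha)\geqslant 1$, contradicting conclusion (3); placing a quadrilateral one level deeper instead produces a vertex in $\Gamma_i(\alpha)$ with two geodesics from $\alpha$, killing conclusions (1) and (2). (The same examples show that even the stronger ``local girth'' condition $\lambda(\Gamma,\alpha)\geqslant 2s+2$ of Hypothesis 1 suffices only for $s\leqslant 2$.)

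The correct repair is to assume that the girth of $\Gamma$ itself is at least $2s+2$ (equivalently, that no cycle of length at most $2s+1$ lies in the ball of radius $s$ about $\alpha$); this is the hypothesis actually available wherever the paper invokes the lemma, since Theorem 1(2) and Section 6 assume $g\geqslant 2d=2s+2$ and the cubic argument claims to derive it. Under that hypothesis your proof goes through essentially verbatim, because every cycle it manufactures has length at most $2s+1$ and is excluded outright, wherever it sits relative to $\alpha$. Two small points should still be tightened. In the induction for part (1), ``the first vertex after $\alpha$ where $P$ and $Q$ meet again'' is not the right object when $P$ and $Q$ share their first edge (the shared initial segment gives no second geodesic and no contradiction); instead take a maximal interval of positions on which the two geodesics disagree, whose endpoints bound two internally disjoint subpaths and hence a genuine cycle of length at most $2i\leqslant 2s$. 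In part (3), the claim that the cycle has odd length at most $2i+1$ requires first noting that, by parts (1) and (2), the unique geodesics to the two adjacent vertices of $\Gamma_i(\alpha)$ agree on a common initial segment and are disjoint thereafter; with that observation, and with the girth hypothesis in place, parts (3), (4) and (5) follow exactly as you describe.
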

%
%\begin{proof}
%
%\end{proof}

\begin{figure}[htb]
\centering
\includegraphics[width=0.40\textwidth]{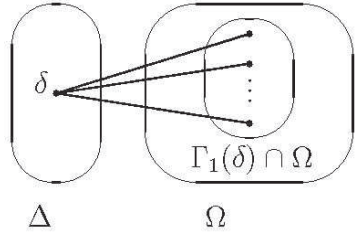}
\caption{The intersection number $\kappa(\Gamma,G;\Delta,\Omega)$ from $\Delta$ to $\Omega$.}
\label{fig: intersectionnumbersbetweenorbits}
\end{figure}

%
%\clearpage
%
%%intersection number between orbits
%\newsavebox{\intersectionnumbersbetweenorbits}
%\savebox{\intersectionnumbersbetweenorbits}{
%\setlength{\unitlength}{1em}
%\begin{picture}(12,10)\label{pic intersectionnumbersbetweenorbits}
%%\multiput(0,0)(1,0){13}{\line(0,1){9}}
%%\multiput(0,0)(0,1){10}{\line(1,0){12}}
%\put(2.5,5){\oval(3,6)}
%\put(9,5){\oval(7,6)}
%\put(1.5,0.5){$\Delta$}
%\put(7.5,0.5){$\Omega$}
%\put(1.75,5){$\delta$}
%\put(2.5,5){\circle*{0.25}}
%\put(7,2.5){$\Gamma_1(\delta)\cap\Omega$}
%\qbezier(2.5,5)(8.99,6.99)(9,7)
%\qbezier(2.5,5)(8.99,5.99)(9,6)
%\qbezier(2.5,5)(8.99,3.99)(9,4)
%\put(9,7){\circle*{0.25}}
%\put(9,6){\circle*{0.25}}
%\put(9,4){\circle*{0.25}}
%\put(8.9,4.7){$\vdots$}
%\put(9,5.5){\oval(3,4)}
%\end{picture}}
%
%\begin{figure}[ht]
%\centering
%\usebox{\intersectionnumbersbetweenorbits}
%%\caption{The intersection number $\kappa(\Gamma,G;\Delta,\Omega)$ from $\Delta$ to $\Omega$.}
%\label{fig: intersectionnumbersbetweenorbits}
%\end{figure}
%
%\clearpage

\begin{lemma}\label{lem intersection number between orbits}
Let $\Gamma$ be a graph and let $G\leqslant \Aut(\Gamma)$. Let $\Omega$ and $\Delta$ be two subsets of $V\Gamma$ such that $G$ is transitive on $\Delta$ and every element of $G$ fixes $\Omega$ setwise. Let $\delta\in \Delta$. Then the intersection number \begin{center}$\kappa(\Gamma,G;\Delta,\Omega)=|\Gamma_1(\delta)\cap \Omega|$\end{center} is independent of the choice of $\delta$. If there have edges between $\Delta$ and $\Omega$, then $\kappa(\Gamma,G;\Delta,\Omega)\geqslant 1$.
\end{lemma}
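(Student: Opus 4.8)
The plan is to prove that the intersection number $\kappa(\Gamma,G;\Delta,\Omega)=|\Gamma_1(\delta)\cap\Omega|$ does not depend on the representative $\delta\in\Delta$, using only the transitivity of $G$ on $\Delta$ together with the fact that $G$ fixes $\Omega$ setwise. The key observation is that every $g\in G$ is a graph automorphism, so it permutes the vertex set while preserving adjacency; consequently $g$ carries the neighbourhood of any vertex to the neighbourhood of its image, that is, $\Gamma_1(\delta)^{g}=\Gamma_1(\delta^{g})$. This is the single structural fact on which the whole argument rests.

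First I would fix two arbitrary elements $\delta,\delta'\in\Delta$ and, invoking the transitivity of $G$ on $\Delta$, choose $g\in G$ with $\delta^{g}=\delta'$. Next I would apply $g$ to the set $\Gamma_1(\delta)\cap\Omega$ and track how the two factors transform. On one hand, since $g$ is an automorphism, the image of the neighbourhood is the neighbourhood of the image: $\bigl(\Gamma_1(\delta)\bigr)^{g}=\Gamma_1(\delta^{g})=\Gamma_1(\delta')$. On the other hand, because $g$ fixes $\Omega$ setwise by hypothesis, $\Omega^{g}=\Omega$. Since applying the bijection $g$ commutes with intersection, I obtain
\begin{equation*}
\bigl(\Gamma_1(\delta)\cap\Omega\bigr)^{g}=\Gamma_1(\delta)^{g}\cap\Omega^{g}=\Gamma_1(\delta')\cap\Omega.
\end{equation*}

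Because $g$ acts as a bijection on $V\Gamma$, it restricts to a bijection between the two finite sets $\Gamma_1(\delta)\cap\Omega$ and $\Gamma_1(\delta')\cap\Omega$, and hence these sets have equal cardinality: $|\Gamma_1(\delta)\cap\Omega|=|\Gamma_1(\delta')\cap\Omega|$. As $\delta,\delta'$ were arbitrary, the quantity $\kappa(\Gamma,G;\Delta,\Omega)$ is well defined, independent of the chosen representative. For the final assertion, I would note that if some edge joins $\Delta$ to $\Omega$, then there exists a vertex $\delta''\in\Delta$ with $\Gamma_1(\delta'')\cap\Omega\neq\emptyset$; evaluating $\kappa$ at this representative gives a value at least $1$, and by the independence just established this value equals $\kappa(\Gamma,G;\Delta,\Omega)$, so $\kappa(\Gamma,G;\Delta,\Omega)\geqslant 1$.

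I do not expect any genuine obstacle here; the statement is exactly the standard fact that orbit-to-fixed-set valencies are constant under a transitive group action, and the proof is a one-line transport-of-structure argument. The only point requiring a modest amount of care is making explicit that automorphisms commute with the neighbourhood operator and with set intersection, so that the two ends of the displayed equation line up; everything else is bookkeeping with bijections between finite sets.
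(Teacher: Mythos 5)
Your proposal is correct and follows essentially the same argument as the paper: transport $\Gamma_1(\delta)\cap\Omega$ by an element $g\in G$ taking $\delta$ to $\delta'$, use that $g$ preserves neighbourhoods and fixes $\Omega$ setwise, and conclude equal cardinalities via the bijection. The only cosmetic difference is that you invoke directly that a bijection commutes with intersection, where the paper establishes the set equality $(\Gamma_1(\delta)\cap\Omega)^g=\Gamma_1(\delta')\cap\Omega$ by a two-sided containment argument; you also spell out the final claim $\kappa\geqslant 1$, which the paper leaves implicit.
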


\begin{proof}
Take any $\alpha\in \Delta$. Then there exists $g\in G$ such that $\alpha=\delta^g$. We have $(\Gamma_1(\delta)\cap \Omega)^g\subseteq \Gamma_1^g(\delta)\cap \Omega^g=\Gamma_1(\delta^g)\cap \Omega=\Gamma_1(\alpha)\cap \Omega$. By the same argument, we have $(\Gamma_1(\alpha)\cap \Omega)^{g^{-1}}\subseteq \Gamma_1(\delta)\cap \Omega$ and so $\Gamma_1(\alpha)\cap \Omega=(\Gamma_1(\alpha)\cap \Omega)^{g^{-1}g}\subseteq (\Gamma_1(\delta)\cap \Omega)^g$. Then $(\Gamma_1(\delta)\cap \Omega)^g=\Gamma_1(\alpha)\cap \Omega$. Hence $g$ induces a one-to-one map from $\Gamma_1(\delta)\cap \Omega$ to $\Gamma_1(\alpha)\cap \Omega$, which implies $|\Gamma_1(\delta)\cap \Omega|=|\Gamma_1(\alpha)\cap \Omega|$. This means the intersection number $|\Gamma_1(\delta)\cap \Omega|$ is independent of the choice of $\delta$.
\end{proof}

Let $G$ be a group acting on a set $\Omega$. We use $\Orb(G;\Omega)$ to denote the set of $G$-orbits in $\Omega$. The following corollary is a direct consequence of Lemma~\ref{lem alpha girth >=2s+2} and Lemma~\ref{lem intersection number between orbits}.

\begin{figure}[htb]
\centering
\includegraphics[width=0.60\textwidth]{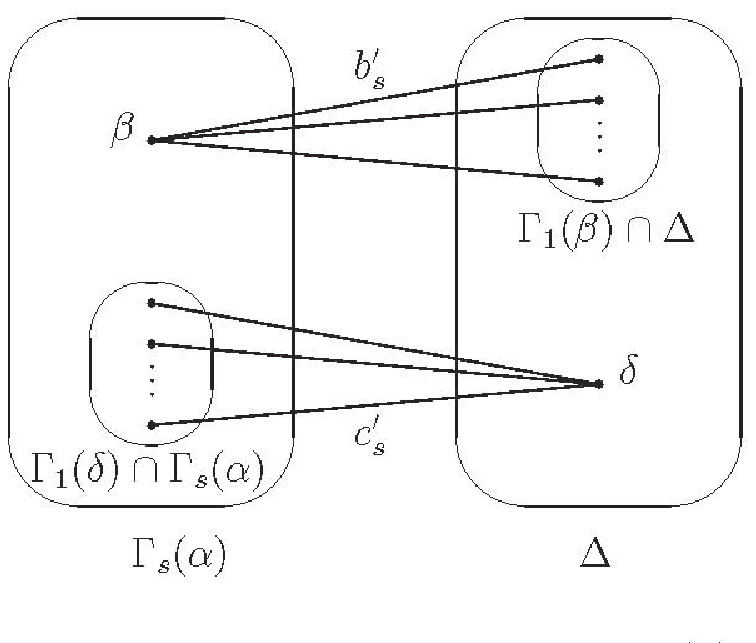}
\caption{The intersection number between $\Gamma_s(\alpha)$ and $\Delta$.}
\label{fig: intersectionnumbersbetweenorbitsR}
\end{figure}

%\clearpage
%
%%intersection number between orbits R
%\newsavebox{\intersectionnumbersbetweenorbitsR}
%\savebox{\intersectionnumbersbetweenorbitsR}{
%\setlength{\unitlength}{1em}
%\begin{picture}(20,16)\label{pic intersectionnumbersbetweenorbitsR}
%%\multiput(0,0)(1,0){13}{\line(0,1){9}}
%%\multiput(0,0)(0,1){10}{\line(1,0){12}}
%\put(4.5,8){\oval(7,12)}
%\put(15.5,8){\oval(7,12)}
%\put(4,0.5){$\Gamma_s(\alpha)$}
%\put(15,0.5){$\Delta$}
%\put(3.5,11){$\beta$}
%\put(4.5,11){\circle*{0.25}}
%\put(16,5){$\delta$}
%\put(15.5,5){\circle*{0.25}}
%\put(13.5,8.5){$\Gamma_1(\beta)\cap\Delta$}
%\put(1.5,2.6){$\Gamma_1(\delta)\cap\Gamma_s(\alpha)$}
%\qbezier(15.5,5)(4.49,6.99)(4.5,7)
%\qbezier(15.5,5)(4.49,5.99)(4.5,6)
%\qbezier(15.5,5)(4.49,3.99)(4.5,4)
%\put(4.5,7){\circle*{0.25}}
%\put(4.5,6){\circle*{0.25}}
%\put(4.5,4){\circle*{0.25}}
%\put(4.4,4.7){$\vdots$}
%\put(4.5,5.5){\oval(3,4)}
%\qbezier(4.5,11)(15.49,12.99)(15.5,13)
%\qbezier(4.5,11)(15.49,11.99)(15.5,12)
%\qbezier(4.5,11)(15.49,9.99)(15.5,10)
%\put(15.5,13){\circle*{0.25}}
%\put(15.5,12){\circle*{0.25}}
%\put(15.5,10){\circle*{0.25}}
%\put(15.4,10.7){$\vdots$}
%\put(15.5,11.5){\oval(3,4)}
%\put(9.5,12.5){$b_s'$}
%\put(9.5,3.5){$c_s'$}
%\end{picture}}
%
%\begin{figure}[ht]
%\centering
%\usebox{\intersectionnumbersbetweenorbitsR}
%\caption{The intersection number between $\Gamma_s(\alpha)$ and $\Delta$.}
%\label{fig: intersectionnumbersbetweenorbitsR}
%\end{figure}
%
%\clearpage

\begin{corollary}\label{cor s+1 orbit}
Let $\Gamma$ be a graph with valency $k\geqslant 3$ and let $G\leqslant \Aut(\Gamma)$. Let $\alpha\in V\Gamma$ and suppose the $\alpha$-girth $\lambda_\alpha(\Gamma)\geqslant 2s+2$ where $s\geqslant 1$. Suppose $G_\alpha$ is transitive on $\Gamma_s(\alpha)$. Let $\Delta$ be a $G_\alpha$-orbit in $\Gamma_{s+1}(\alpha)$. Let $\beta\in \Gamma_s(\alpha)$ and $\delta\in \Delta$. Then
\begin{enumerate}
%\item there is no edges between vertices in $\Gamma_i(\alpha)$ for $1\leqslant i\leqslant s$,
%\item $k_i=|\Gamma_i(\alpha)|=k(k-1)^{i-1}$ for $1\leqslant i\leqslant s$,
\item $G_\alpha$ is transitive on $\Gamma_i(\alpha)$ for $1\leqslant i\leqslant s$,
\item $b_s'(\Gamma,G_\alpha,\Delta)=|\Gamma_1(\beta)\cap \Delta|\geqslant 1$ is independent of the choice of $\beta$,
\item $c_s'(\Gamma,G_\alpha,\Delta)=|\Gamma_1(\delta)\cap \Gamma_s(\alpha)|\geqslant 1$ is independent of the choice of $\delta$, and
\item $b_s(\Gamma,\alpha)=\sum\limits_{\Delta\in \Orb(G_\alpha;\Gamma_{s+1}(\alpha))}b_s'(\Gamma,G_\alpha;\Delta)=k-1$.
\end{enumerate}
\end{corollary}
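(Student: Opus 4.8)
The plan is to prove the four parts in order, leaning on the two lemmas just established. The key observation is that the hypotheses of Corollary~\ref{cor s+1 orbit} are tailored so that both Lemma~\ref{lem alpha girth >=2s+2} and Lemma~\ref{lem intersection number between orbits} apply. Since $\lambda_\alpha(\Gamma)\geqslant 2s+2$ and $G_\alpha$ is transitive on $\Gamma_s(\alpha)$ (so in particular $\Gamma_s(\alpha)\neq\emptyset$, giving $s<\varepsilon_\Gamma(\alpha)$), Lemma~\ref{lem alpha girth >=2s+2} supplies the local intersection numbers, most importantly $b_s(\Gamma,\alpha)=k-1$, which will be the right-hand side in part~(4).

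For part~(1), I would argue that transitivity of $G_\alpha$ on $\Gamma_s(\alpha)$ forces transitivity on each nearer sphere $\Gamma_i(\alpha)$ for $1\leqslant i\leqslant s$. The mechanism is the uniqueness of shortest paths from Lemma~\ref{lem alpha girth >=2s+2}(1): every $\beta\in\Gamma_s(\alpha)$ lies at the end of a unique shortest path $\alpha=\beta_0,\beta_1,\ldots,\beta_s=\beta$, and $G_\alpha$, fixing $\alpha$ and preserving distances, must send this unique path to the unique path attached to $\beta^g$. Thus the map $\beta\mapsto\beta_i$ (the $i$-th vertex on its shortest path) is $G_\alpha$-equivariant and surjective onto $\Gamma_i(\alpha)$, so transitivity on $\Gamma_s(\alpha)$ transfers to $\Gamma_i(\alpha)$.

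For parts~(2) and~(3), I would invoke Lemma~\ref{lem intersection number between orbits} twice with the roles of the two sets swapped. In part~(2), take $\Delta$ (a $G_\alpha$-orbit, hence $G_\alpha$-transitive) as the transitive set and $\Gamma_s(\alpha)$ as the setwise-fixed set $\Omega$: here $\Gamma_s(\alpha)$ is fixed setwise by every element of $G_\alpha$ because automorphisms in $G_\alpha$ fix $\alpha$ and preserve distance. Wait—part~(2) instead counts $|\Gamma_1(\beta)\cap\Delta|$ for $\beta\in\Gamma_s(\alpha)$, so I take $\Gamma_s(\alpha)$ as the transitive set (transitivity from part~(1)) and $\Delta$ as setwise-fixed, which holds since $\Delta$ is a $G_\alpha$-orbit. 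Independence of $b_s'$ then follows, and $b_s'\geqslant 1$ will follow from the fact that $\Delta\subseteq\Gamma_{s+1}(\alpha)$ is nonempty and every vertex of $\Gamma_{s+1}(\alpha)$ has a neighbour in $\Gamma_s(\alpha)$ (a shortest-path predecessor), so there is at least one edge between $\Gamma_s(\alpha)$ and $\Delta$, whence the final clause of Lemma~\ref{lem intersection number between orbits} gives $b_s'\geqslant 1$. Part~(3) is the symmetric statement: take $\Delta$ as the transitive set and $\Gamma_s(\alpha)$ as setwise-fixed, count $|\Gamma_1(\delta)\cap\Gamma_s(\alpha)|$, and conclude independence and positivity the same way.

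For part~(4), I would partition $\Gamma_1(\beta)\cap\Gamma_{s+1}(\alpha)$ according to which $G_\alpha$-orbit $\Delta$ of $\Gamma_{s+1}(\alpha)$ each neighbour lies in. These orbits are disjoint and cover $\Gamma_{s+1}(\alpha)$, so summing $|\Gamma_1(\beta)\cap\Delta|=b_s'(\Gamma,G_\alpha;\Delta)$ over all $\Delta\in\Orb(G_\alpha;\Gamma_{s+1}(\alpha))$ yields $|\Gamma_1(\beta)\cap\Gamma_{s+1}(\alpha)|$, which is exactly $b_s(\Gamma,\alpha)=k-1$ by Lemma~\ref{lem alpha girth >=2s+2}(4). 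I do not expect a serious obstacle here; the main care is bookkeeping—being explicit about which set plays the transitive role and which the setwise-fixed role in each application of Lemma~\ref{lem intersection number between orbits}, and verifying at the outset that the girth bound indeed lets us apply Lemma~\ref{lem alpha girth >=2s+2} with this value of $s$.
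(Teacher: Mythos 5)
Your proposal is correct and takes essentially the same route as the paper: the paper gives no separate proof, stating only that the corollary is a direct consequence of Lemma~\ref{lem alpha girth >=2s+2} and Lemma~\ref{lem intersection number between orbits}, and your write-up is precisely that two-lemma bookkeeping (equivariant truncation of unique shortest paths for part~(1), the intersection-number lemma with the transitive and setwise-fixed roles assigned as you describe for parts~(2) and~(3), and the orbit partition of $\Gamma_{s+1}(\alpha)$ for part~(4)). One small correction: the strict inequality $s<\varepsilon_\Gamma(\alpha)$ needed to invoke Lemma~\ref{lem alpha girth >=2s+2} comes from $\Delta\subseteq\Gamma_{s+1}(\alpha)$ being nonempty, not from $\Gamma_s(\alpha)\neq\emptyset$ (which only yields $s\leqslant\varepsilon_\Gamma(\alpha)$), though this is immediate from the corollary's hypotheses.
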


Note that,
\begin{eqnarray*}
c_s'(\Gamma,G_\alpha,\Delta)&=&\kappa(\Gamma,G_\alpha,\Delta,\Gamma_s(\alpha)),\\
b_s'(\Gamma,G_\alpha,\Delta)&=&\kappa(\Gamma,G_\alpha,\Gamma_s(\alpha),\Delta)
\end{eqnarray*}
in the above corollary. If the graph in Corollary~\ref{cor s+1 orbit} is vertex-transitive, then the intersection numbers are independent of the vertex $\alpha$ and they have the following properties.

%b_s'(\Gamma)&=&\max\{b_s'(\Gamma,G_\alpha,\Delta)\mid \Delta\in \Orb(G_\alpha;\Gamma_{s+1}(\alpha))\}\\
%&=&\max\{\kappa(\Gamma,G_\alpha,\Gamma_s(\alpha),\Delta)\mid \Delta\in \Orb(G_\alpha;\Gamma_{s+1}(\alpha))\},\\
%&=&\min\{\kappa(\Gamma,G_\alpha,\Delta,\Gamma_s(\alpha))\mid \Delta\in \Orb(G_\alpha;\Gamma_{s+1}(\alpha))\}

\begin{lemma}\label{lem s-dt properties of intersection numbers}
Let $\Gamma$ be a $(G,s)$-distance-transitive graph with valency $k\geqslant 3$ where $G\leqslant \Aut(\Gamma)$. Let $1\leqslant i\leqslant s$. Then the intersection numbers
\begin{eqnarray*}
c_i(\Gamma)&=&c_i(\Gamma,\alpha)=\kappa(\Gamma,G_\alpha;\Gamma_i(\alpha),\Gamma_{i-1}(\alpha)), \\ a_i(\Gamma)&=&a_i(\Gamma,\alpha)=\kappa(\Gamma,G_\alpha;\Gamma_i(\alpha),\Gamma_i(\alpha)), \\ b_i(\Gamma)&=&b_i(\Gamma,\alpha)=\kappa(\Gamma,G_\alpha;\Gamma_i(\alpha),\Gamma_{i+1}(\alpha)),\\
c_s'(\Gamma)&=&c_s'(\Gamma,\alpha)=\min\{c_s'(\Gamma,G_\alpha,\Delta)\mid \Delta\in \Orb(G_\alpha;\Gamma_{s+1}(\alpha))\}
\end{eqnarray*}
are independent of $\alpha$. Furthermore, they have the following relations
\begin{eqnarray*}
1&=&c_1(\Gamma)\leqslant c_2(\Gamma)\leqslant c_3(\Gamma)\leqslant \cdots\leqslant c_s(\Gamma)\leqslant c_s'(\Gamma),\\
k&=&b_0(\Gamma)>b_1(\Gamma)\geqslant b_2(\Gamma)\geqslant\cdots\geqslant b_s(\Gamma).
\end{eqnarray*}
\end{lemma}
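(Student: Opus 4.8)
The plan is to treat the two assertions separately: first that every listed quantity is well defined and independent of $\alpha$, and then the two chains of inequalities, which I would derive from a single inclusion between neighbour-layers of a geodesic.

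For well-definedness I would start from the fact that $(G,s)$-distance-transitivity makes $G$ transitive on $V\Gamma$ and, for each $1\le i\le s$, forces $G_\alpha$ to be transitive on $\Gamma_i(\alpha)$: given $\beta,\beta'\in\Gamma_i(\alpha)$, transitivity of $G$ on ordered pairs at distance $i$ supplies $g$ with $(\alpha,\beta)^g=(\alpha,\beta')$, so $g\in G_\alpha$. Since every element of $G_\alpha$ preserves $\partial_\Gamma(\alpha,-)$ and hence fixes each layer setwise, Lemma~\ref{lem intersection number between orbits} applies with $\Delta=\Gamma_i(\alpha)$ and $\Omega\in\{\Gamma_{i-1}(\alpha),\Gamma_i(\alpha),\Gamma_{i+1}(\alpha)\}$, giving $c_i(\Gamma,\alpha),a_i(\Gamma,\alpha),b_i(\Gamma,\alpha)$; the quantities $c_s'(\Gamma,G_\alpha,\Delta)$ are available from Corollary~\ref{cor s+1 orbit}. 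For independence of $\alpha$ I would use vertex-transitivity: if $\alpha'=\alpha^g$, then $g$ maps $\Gamma_j(\alpha)$ to $\Gamma_j(\alpha')$ and preserves adjacency, matching every count at $\alpha$ with the corresponding count at $\alpha'$, and it induces a value-preserving bijection $\Delta\mapsto\Delta^g$ between $\Orb(G_\alpha;\Gamma_{s+1}(\alpha))$ and $\Orb(G_{\alpha'};\Gamma_{s+1}(\alpha'))$, so the two defining minima for $c_s'$ agree.

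The boundary values are immediate: $c_1(\Gamma)=|\Gamma_1(\beta)\cap\{\alpha\}|=1$, $b_0(\Gamma)=k$, and for $\gamma\in\Gamma_1(\alpha)$ the vertex $\alpha$ already accounts for one neighbour of $\gamma$ in $\Gamma_0(\alpha)$, so $b_1(\Gamma)\le k-1<b_0(\Gamma)$. For the monotonicity the key step, and really the only idea in the proof, is to compare two neighbour-layers across base points one edge apart. For $1\le i\le s-1$ I would take $u,w$ with $\partial_\Gamma(u,w)=i+1$ and let $v$ be the neighbour of $u$ on a shortest $u$--$w$ path, so $\partial_\Gamma(v,w)=i$; for $y\in\Gamma_1(w)\cap\Gamma_{i-1}(v)$ the triangle inequality pins $i=\partial_\Gamma(u,w)-1\le\partial_\Gamma(u,y)\le\partial_\Gamma(u,v)+\partial_\Gamma(v,y)=i$, giving $\Gamma_1(w)\cap\Gamma_{i-1}(v)\subseteq\Gamma_1(w)\cap\Gamma_i(u)$ and hence $c_i(\Gamma)\le c_{i+1}(\Gamma)$. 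The mirror-image choice, with $\partial_\Gamma(q,x)=i+1$ and $p$ the neighbour of $q$ on a shortest $q$--$x$ path (so $\partial_\Gamma(p,x)=i$), yields $\Gamma_1(x)\cap\Gamma_{i+2}(q)\subseteq\Gamma_1(x)\cap\Gamma_{i+1}(p)$ by the same estimate and hence $b_{i+1}(\Gamma)\le b_i(\Gamma)$; chaining these gives the two monotone sequences.

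For the last inequality $c_s(\Gamma)\le c_s'(\Gamma)$ I would reuse the first inclusion with $u=\alpha$ and $w=\delta$ for an arbitrary $\delta\in\Gamma_{s+1}(\alpha)$: it yields $c_s(\Gamma)\le|\Gamma_1(\delta)\cap\Gamma_s(\alpha)|=c_s'(\Gamma,G_\alpha,\Delta)$ for the orbit $\Delta$ containing $\delta$, and since this holds for every orbit it holds for their minimum. The hard part is not any single computation but the bookkeeping of ranges: every pair I invoke must stay at distance at most $s$ so that $(G,s)$-distance-transitivity identifies the local count with the global number, and in the final step the count at the distance-$s$ pair $(v,\delta)$ must genuinely equal $c_s(\Gamma)$. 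Once the base point is correctly chosen as a neighbour of an endpoint, each inclusion is a one-line consequence of the triangle inequality, so I expect no further obstacle.
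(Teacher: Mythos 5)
Your proposal is correct and follows essentially the same route as the paper: both arguments compare neighbour-layer intersections across base points one edge apart along a geodesic (the inclusion $\Gamma_1(w)\cap\Gamma_{i-1}(v)\subseteq\Gamma_1(w)\cap\Gamma_i(u)$ and its mirror image for the $b_i$), and handle $c_s(\Gamma)\leqslant c_s'(\Gamma)$ by taking the far endpoint in $\Gamma_{s+1}(\alpha)$ and minimizing over orbits. Your write-up is in fact slightly more complete, since the paper proves only the $c_s\leqslant c_s'$ case in detail, declares the remaining inequalities ``similar,'' and dismisses independence of $\alpha$ and $b_0>b_1$ as obvious, whereas you spell these out.
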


\begin{proof}
Since $\Gamma$ is $G$-vertex-transitive, it is obvious that the intersection numbers are independent of $\alpha$.

Now we give the proof of the relations of $c_i(\Gamma)$'s. Take a vertex $\alpha\in V\Gamma$ and an arbitrary $G_\alpha$-orbit $\Delta$ in $\Gamma_{s+1}(\alpha)$. We first show that $c_s(\Gamma,\alpha)\leqslant c_s'(\Gamma,G_\alpha,\Delta)$. Take a vertex $\delta\in \Delta$. The distance $\partial_\Gamma(\alpha,\delta)=s+1$. Let $\beta\in \Gamma(\alpha)$ such that $\beta$ lies on a shortest path connecting $\alpha$ and $\delta$. Then $\partial_\Gamma(\beta,\delta)=s$. Assume $w\in \Gamma_{s-1}(\beta)\cap\Gamma(\delta)$. Then $\partial_\Gamma(\alpha,w)=s$, i.e. $w\in \Gamma_s(\alpha)\cap\Gamma(\delta)$. This implies $\Gamma_{s-1}(\beta)\cap\Gamma(\delta)\subseteq \Gamma_s(\alpha)\cap\Gamma(\delta)$. Hence $c_s(\Gamma,\beta)=|\Gamma_{s-1}(\beta)\cap\Gamma(\delta)|\leqslant |\Gamma_s(\alpha)\cap\Gamma(\delta)|=c_s'(\Gamma,G_\alpha,\Delta)$. Since $\Delta$ is arbitrary, we thus have $c_s(\Gamma)=c_s(\Gamma,\beta)\leqslant c_s'(\Gamma,\alpha)=c_s'(\Gamma)$. The proof of $c_i(\Gamma)\leqslant c_{i+1}(\Gamma)$, where $1\leqslant i\leqslant s-1$, are similar as above.

Now we give the proof of $b_i(\Gamma)\geqslant b_{i+1}(\Gamma)$ where $1\leqslant i\leqslant s-1$. Take a vertex $\alpha\in V\Gamma$ and a vertex $\delta\in \Gamma_{i+1}(\alpha)$. Let $\beta\in \Gamma(\alpha)$ such that $\beta$ lies on a shortest path connecting $\alpha$ and $\delta$. Then $\partial_\Gamma(\beta,\delta)=i$. Assume $w\in \Gamma_{i+2}(\alpha)\cap\Gamma(\delta)$. Then $\partial_\Gamma(\beta,w)=i+1$, i.e. $w\in \Gamma_{i+1}(\beta)\cap\Gamma(\delta)$. This implies $\Gamma_{i+2}(\alpha)\cap\Gamma(\delta)\subseteq \Gamma_{i+1}(\beta)\cap\Gamma(\delta)$. Hence $b_{i+1}(\Gamma,\alpha)=|\Gamma_{i+2}(\alpha)\cap\Gamma(\delta)|\leqslant |\Gamma_{i+1}(\beta)\cap\Gamma(\delta)|=b_i(\Gamma,\beta)$, i.e. $b_{i+1}(\Gamma)\leqslant b_i(\Gamma)$.
\end{proof}

%\newpage

%************************************************************************
%************************************************************************

\section{Local actions and designs}\label{sec Local actions and designs}

We recall the definition and properties of designs~\cite{Combinatorial Designs Constructions and Analysis}. Let $t,c,k$ be integers satisfying $1 \leqslant t \leqslant c \leqslant k$, and let $\lambda$ be a positive integer. Let $X$ be a finite set with cardinality $|X|=k$. Let $\mathcal{B}$ be a subset of $\{S\subseteq X\mid |S|=c\}$. The pair $(X,\mathcal{B})$ is called a $t$-$(k,c,\lambda)$ design if for any $t$-subset $T$ of $X$, the size $|\{B\in \mathcal{B}\mid T\subseteq B\}|=\lambda$ is independent of $T$. Elements in $\mathcal{B}$ are called blocks. If $(X,\mathcal{B})$ is a $t$-$(k,c,\lambda)$ design ($t$-design for short), then it becomes a $j$-design for each $1\leqslant j\leqslant t$, since $|\{B\in \mathcal{B}\mid I\subseteq B\}|=\lambda_j$ is independent of the choice of the $j$-subset $I$ of $X$. Note that $\lambda_j=\lambda\binom{k-j}{t-j}\bigl/\binom{c-j}{t-j}$ for each $0\leqslant j\leqslant t$, $\lambda=\lambda_t$, the number of blocks is $b=|\mathcal{B}|=\lambda_0=\lambda\binom{k}{t}\bigl/\binom{c}{t}$ and each element $x\in X$ occurs in $\lambda_1=\lambda\binom{k-1}{t-1}\bigl/\binom{c-1}{t-1}$ blocks.

\begin{hypothesis}\label{hypothesis}
Let $\Gamma$ be a graph with valency $k\geqslant 3$ and let $G\leqslant \Aut(\Gamma)$. Let $\alpha\in V\Gamma$ and suppose the local girth $\lambda(\Gamma,\alpha)\geqslant 2s+2$ where $s\geqslant 1$. Suppose $G_\alpha$ is transitive on $\Gamma_s(\alpha)$. Let $\Delta$ be a $G_\alpha$-orbit in $\Gamma_{s+1}(\alpha)$. Suppose the local action of $G_\alpha$ on $\Gamma_1(\alpha)$ is $t$-homogeneous for some $1\leqslant t\leqslant c_s'$ where $c_s'=c_s'(\Gamma,G_\alpha,\Delta)\leqslant k$.
\end{hypothesis}

%\begin{hypothesis}[vertex transitive]\label{hypothesis}
%Let $\Gamma$ be a $G$-vertex transitive graph with valency $k\geqslant 3$ where $G\leqslant \Aut(\Gamma)$. Let $\alpha\in V\Gamma$ and suppose the $\alpha$-girth $\lambda_\alpha(\Gamma)\geqslant 2s+2$ where $s\geqslant 1$. Suppose $G_\alpha$ is transitive on $\Gamma_s(\alpha)$. Let $\Delta$ be a $G_\alpha$-orbit in $\Gamma_{s+1}(\alpha)$. Suppose the local action of $G_\alpha$ on $\Gamma_1(\alpha)$ is $t$-homogeneous for some $1\leqslant t\leqslant c_s'$ where $c_s'=c_s'(\Gamma,G_\alpha,\Delta)\leqslant k$.
%\end{hypothesis}
%
%\begin{remark}\label{remark s-arc transitive}
%By Corollary~\ref{cor s+1 orbit}, Hypothesis~\ref{hypothesis} implies $\Gamma$ is $(G,s)$-arc transitive.
%\end{remark}

From now on, in this section we suppose Hypothesis~\ref{hypothesis} holds. By Lemma~\ref{lem alpha girth >=2s+2}, for $1\leqslant i\leqslant s$, the intersection numbers
\begin{center}$(c_i(\Gamma,\alpha),a_i(\Gamma,\alpha),b_i(\Gamma,\alpha))=(1,0,k-1)$,\end{center} there is no edge between vertices in $\Gamma_i(\alpha)$ and \begin{center}$k_i=|\Gamma_i(\alpha)|=k(k-1)^{i-1}$.\end{center} Let $\Gamma_1(\alpha)=\{\beta_1,\beta_2,\ldots,\beta_k\}$. For each $1\leqslant j\leqslant k$, let
\begin{eqnarray*}
B_j&=&\Gamma_s(\alpha)\cap \Gamma_{s-1}(\beta_j),\\
C_j&=&\bigcup\limits_{\gamma\in B_j}\Gamma_1(\gamma)\cap\Gamma_{s+1}(\alpha)=\Gamma_{s+1}(\alpha)\cap\Gamma_s(\beta_j).
\end{eqnarray*}
Then \begin{center}$|B_j|=(k-1)^{s-1}$\end{center} and $|C_j|\leqslant b_s(\Gamma,\alpha)|B_j|=(k-1)|B_j|$. Since $\lambda_{\beta_j}(\Gamma)+1\geqslant \lambda(\Gamma,\alpha)\geqslant 2s+2$, we have $\lambda_{\beta_j}(\Gamma)\geqslant 2s+1$, and so \begin{center}$|C_j|= (k-1)|B_j|=(k-1)^s$.\end{center} The sets $B_1,B_2,\ldots,B_k$ form a partition of $\Gamma_s(\alpha)$. Let
\begin{equation}\label{equation X=[k]}
X=\{1,2,\ldots,k\}.
\end{equation}

Let $g\in G_\alpha$ such that $g$ maps $\beta_j\in\Gamma_1(\alpha)$ to $\beta_h\in\Gamma_1(\alpha)$. Then $C_j^g=\bigl(\Gamma_{s+1}(\alpha)\cap \Gamma_s(\beta_j)\bigr)^g\subseteq \Gamma_{s+1}^g(\alpha)\cap \Gamma_s^g(\beta_j)=\Gamma_{s+1}(\alpha)\cap \Gamma_s(\beta_j^g)=\Gamma_{s+1}(\alpha)\cap \Gamma_s(\beta_h)=C_h$, that is $C_j^g\subseteq C_h$. By the same argument, we have $C_h^{g^{-1}}\subseteq C_j$, and so $C_h=C_h^{g^{-1}g}\subseteq C_j^g$. Hence we have $C_j^g=C_h$. This means the vertex stabilizer $G_\alpha$ is transitive on the set $\{C_j \mid j\in X\}$. Similarly, since $G_\alpha$ is $t$-homogeneous on $\Gamma_1(\alpha)$, we have that $G_\alpha$ is transitive on the set $\Biggl\{\bigcap\limits_{j\in A}C_j~\Biggl|~ A\subseteq X \text{ with } |A|=a\Biggr\}$ where $1\leqslant a\leqslant t$.

%Let $n\geqslant 2$ be the number of $G_\alpha$-orbits in $\Gamma_d(\alpha)$ and let $\Delta_1,\Delta_2,\ldots,\Delta_n$ be all the $G_\alpha$-orbits in $\Gamma_d(\alpha)$. Fix a $G_\alpha$-orbit $\Delta_i$. The intersection numbers between $\Gamma_s(\alpha)$ and $\Delta$ are $b_s'\geqslant 1$ and $1\leqslant c_s'\leqslant k$ where $b_s'=b_s'(\Gamma,G_\alpha,\Delta)$. Next we will show the adjacency relations between $\Gamma_{d-1}(\alpha)$ and $\Delta_i$.

The intersection numbers between $\Gamma_s(\alpha)$ and $\Delta$ are $b_s'$ and $c_s'$ where $b_s'=b_s'(\Gamma,G_\alpha,\Delta)\geqslant 1$ and $1\leqslant c_s'=c_s'(\Gamma,G_\alpha,\Delta)\leqslant k$. For $1\leqslant j\leqslant k$, let
\begin{eqnarray*}
\Delta_j&=&\bigcup\limits_{\gamma\in B_j}\Delta\cap\Gamma_1(\gamma)=\Delta\cap\Gamma_s(\beta_j)=\Delta\cap C_j,\\
\Delta_j'&=&\Delta\setminus \Delta_j.
\end{eqnarray*}
The third equation in the line of $\Delta_j$ holds as $\Delta\cap\Gamma_s(\beta_j)=\bigl(\Delta\cap\Gamma_{s+1}(\alpha)\bigr)\cap\Gamma_s(\beta_j)=\Delta\cap\bigl(\Gamma_{s+1}(\alpha)\cap\Gamma_s(\beta_j)\bigr)=\Delta\cap C_j$. Then $|\Delta_j|\leqslant b_s'|B_j|$. Since $\lambda_{\beta_j}(\Gamma)\geqslant 2s+1$, we have that for each $\delta\in \Delta$ the size
\begin{equation}\label{eqn Gamma1delta cap Bj <=1}
|\Gamma_1(\delta)\cap B_j|\leqslant 1,
\end{equation}
and \begin{center}$\Bigl(\Delta\cap \Gamma_1(\gamma_1)\Bigr)\cap\Bigl(\Delta\cap \Gamma_1(\gamma_2)\Bigr)=\emptyset$\end{center} for $\gamma_1,\gamma_2\in B_j$ with $\gamma_1\neq \gamma_2$. By the definition of $b_s'$, for each $\gamma\in B_j$ the size
\begin{equation}
|\Delta \cap\Gamma_1(\gamma)|=b_s'.
\end{equation}
Then $\Delta_j$ is the disjoint union of all the set $\Delta\cap\Gamma_1(\gamma)$ for $\gamma\in B_j$ and
\begin{center}$|\Delta_j|= b_s'|B_j|=b_s'(k-1)^{s-1}\geqslant 1$\end{center}
By the discussion of the last paragraph, we have that the vertex stabilizer $G_\alpha$ is transitive on the set $\{\Delta_j \mid j\in X\}$.% (see Figure~\ref{fig: Adjacency relations between Bj and Deltaij$}).

Let $S\subseteq X$ with $|S|\leqslant c_s'$. Define
\begin{eqnarray*}
\beta(S)&=&\bigcap\limits_{j\in S}\Gamma_s(\beta_j),\\
\Delta(S)&=&\bigcap\limits_{j\in S}\Delta_j=\Delta\cap\left(~\bigcap\limits_{j\in S}\Gamma_s(\beta_j)\right)=\Delta\cap \beta(S),\\
%\beta(S)&=&\bigcap\limits_{j\in S}\Gamma_s(\beta_j),\\
B_j(S)&=&\bigcup\limits_{\delta\in\Delta(S)}B_j\cap\Gamma_1(\delta).
\end{eqnarray*}
Sometimes, when we want to specify $\Delta$, we use $B_j(S)_\Delta$ to denote $B_j(S)$. Then
\begin{eqnarray*}
%\Delta_i^S&=&\Delta_i\cap \beta^S,\\
\Delta(S)&=&\{\delta\in \Delta\mid \text{for each } j\in S,~\Gamma_1(\delta)\cap B_j\neq\emptyset\}\\
&=&\{\delta\in \Delta\mid \text{for each } j\in S,~|\Gamma_1(\delta)\cap B_j|=1\}\\
&=&\{\delta\in \Delta\mid \text{for each } j\in S,~\partial_\Gamma(\delta,\beta_j)=s\}.
\end{eqnarray*}
Let $S$ and $T$ be two subsets of $X$ with $S\subseteq T$. Then $\Delta(S)\supseteq \Delta(T)$ and $\Delta(S)\subseteq \Delta_j$ for any $j\in S$.

%\vspace{3em}

%Let $S\subset X$ with $|S|=c_i'$. Define
%\begin{eqnarray*}
%\Delta_i^S&=&\bigcap\limits_{j\in S}\Delta_{ij},\\
%\beta^S&=&\bigcap\limits_{j\in S}\Gamma_{d-1}(\beta_j),\\
%B_j^S&=&\bigcup\limits_{\delta\in\Delta_i^S}B_j\cap\Gamma_1(\delta).
%\end{eqnarray*}
%Then
%\begin{eqnarray*}
%\Delta_i^S&=&\Delta_i\cap \beta^S,\\
%\Delta_i^S&=&\{\delta\in \Delta_i\mid \text{for each } j\in S, \Gamma_1(\delta)\cap B_j\neq\emptyset\}\\
%&=&\{\delta\in \Delta_i\mid \text{for each } j\in S, |\Gamma_1(\delta)\cap B_j|=1\}\\
%&=&\{\delta\in \Delta_i\mid \text{for each } j\in S, \partial_\Gamma(\delta,\beta_j)=d-1\}.
%\end{eqnarray*}

Let $S\subseteq X$ with $|S|=c_s'$. Suppose $\Delta(S)\neq\emptyset$, and let $x\in \Delta(S)$. Then
\begin{equation}\label{eqn Gamma1x cap Bj}
|\Gamma_1(x)\cap B_j|=\left\{\begin{array}{cl} 1, &  \text{ if $j\in S$;}\\ 0, &  \text{ if $j\in X\setminus S$.}\end{array}\right.
\end{equation}
Let
\begin{eqnarray*}
S(x)&=&\{j\in X\mid \Gamma_1(x)\cap B_j\neq\emptyset\}\\
&=&\{j\in X\mid |\Gamma_1(x)\cap B_j|=1\}\\
&=&\{j\in X\mid \partial_\Gamma(x,\beta_j)=s\}.
\end{eqnarray*}
Then $S=S(x)$. Let $j\in S$. Then for each $\gamma\in B_j(S)$, we have
\begin{equation}
1\leqslant |\Gamma_1(\gamma)\cap \Delta(S)|\leqslant b_s'.
\end{equation}

Let $1\leqslant u\leqslant c_s'$ and let
\begin{equation}\label{equation B S subset X}
\mathcal{B}(u)=\{S\subseteq X\mid |S|=u\text{ and }\Delta(S)\neq\emptyset\}.
\end{equation}
Then $\mathcal{B}(1)=\{\{j\}\mid j\in X\}$ since $\Delta_j\neq \emptyset$ for each $j\in X$, and
\begin{equation}\label{eqn Delta = union of DeltaS S in Bu}
\Delta=\bigcup\limits_{S\in \mathcal{B}(u)}\Delta(S).
\end{equation}
The set $\mathcal{B}(c_s')$ has the following properties.

\begin{lemma}\label{lem DeltaiS DeltaiT}
Let $S,T\in \mathcal{B}(c_s')=\{S\subseteq X\mid |S|=c_s'\text{ and }\Delta(S)\neq\emptyset\}$.
\begin{enumerate}
\item If $S\neq T$, then \begin{equation}\label{eqn Delta S cap Delta T}\Delta(S)\cap \Delta(T)=\emptyset.\end{equation}
\item Let $x\in \Delta(S)$ and $y\in \Delta(T)$, and let $g\in G_\alpha$ such that $y=x^g$. Then $\Delta^g(S)=\Delta(T)$, and $g$ induces a bijection from $S$ to $T$.
\end{enumerate}
\end{lemma}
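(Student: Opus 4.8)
The plan is to reduce everything to one structural fact: for every $z\in\Delta$ the support set $S(z)=\{j\in X\mid \partial_\Gamma(z,\beta_j)=s\}$ has size exactly $c_s'$. This follows from the material already assembled. By Corollary~\ref{cor s+1 orbit}(3) the set $\Gamma_1(z)\cap\Gamma_s(\alpha)$ has exactly $c_s'$ elements; these are distributed among the blocks $B_1,\dots,B_k$ that partition $\Gamma_s(\alpha)$; and by~\eqref{eqn Gamma1delta cap Bj <=1} each block meets $\Gamma_1(z)$ in at most one vertex. Hence the $c_s'$ neighbours of $z$ in $\Gamma_s(\alpha)$ occupy $c_s'$ distinct blocks, and the set of those indices is exactly $S(z)$; so $|S(z)|=c_s'$ and $z\in\Delta(S(z))$. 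In particular, the identity $S=S(x)$ recorded above (for $x\in\Delta(S)$ with $|S|=c_s'$) says that each member of $\Delta(S)$ pins down $S$ uniquely.

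Part (1) I would then dispose of in one line. If $z\in\Delta(S)\cap\Delta(T)$ with $S,T\in\mathcal{B}(c_s')$, then $z\in\Delta(S)$ and $|S|=c_s'$ give $S=S(z)$, while $z\in\Delta(T)$ and $|T|=c_s'$ give $T=S(z)$; hence $S=T$. Contrapositively, $S\neq T$ forces $\Delta(S)\cap\Delta(T)=\emptyset$.

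For Part (2) I would first name the combinatorial effect of $g$. Since $g\in G_\alpha$ fixes $\alpha$, it permutes $\Gamma_1(\alpha)=\{\beta_1,\dots,\beta_k\}$ and so induces a permutation $\sigma\in\mathrm{Sym}(X)$ with $\beta_j^g=\beta_{\sigma(j)}$. As $g$ is a distance-preserving automorphism fixing $\alpha$, it sends $\Gamma_s(\beta_j)$ to $\Gamma_s(\beta_{\sigma(j)})$, and since $\Delta$ is a $G_\alpha$-orbit we have $\Delta^g=\Delta$. Transporting the defining intersection then gives
\[
\Delta(S)^g=\Bigl(\Delta\cap\bigcap_{j\in S}\Gamma_s(\beta_j)\Bigr)^{g}=\Delta\cap\bigcap_{j\in S}\Gamma_s(\beta_{\sigma(j)})=\Delta\cap\beta(\sigma(S))=\Delta(\sigma(S)),
\]
where $\sigma(S)=\{\sigma(j)\mid j\in S\}$ has size $c_s'$. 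To identify $\sigma(S)$ with $T$ I would use the uniqueness from the first paragraph: from $x\in\Delta(S)$ we get $y=x^{g}\in\Delta(S)^g=\Delta(\sigma(S))$, whence $\sigma(S)=S(y)$; and $y\in\Delta(T)$ gives $T=S(y)$. Therefore $\sigma(S)=T$, so $\Delta(S)^g=\Delta(T)$, and $\sigma$ restricts to a bijection $S\to T$, which is the bijection induced by $g$.

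I do not expect a genuine obstacle: the size count $|S(z)|=c_s'$, the fact that $g$ permutes the blocks (so $\Gamma_s(\beta_j)^g=\Gamma_s(\beta_{\sigma(j)})$), and the invariance $\Delta^g=\Delta$ are all essentially in place, so the only real care is the bookkeeping of how $g$ transports the intersection $\bigcap_{j\in S}\Gamma_s(\beta_j)$ to $\bigcap_{j\in S}\Gamma_s(\beta_{\sigma(j)})$, which is routine once $\sigma$ has been named.
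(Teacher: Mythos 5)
Your proof is correct, and for part (2) it takes a genuinely different route from the paper's. For part (1) the two arguments are the same counting in different packaging: the paper takes $x\in\Delta(S)\cap\Delta(T)$ and computes $c_s'=|\Gamma_1(x)\cap\Gamma_s(\alpha)|=|S\cup T|>c_s'$ directly, while you first isolate the fact that every $z\in\Delta$ has support $S(z)$ of size exactly $c_s'$ (via Corollary~\ref{cor s+1 orbit}, the partition of $\Gamma_s(\alpha)$ by the $B_j$, and Equation~(\ref{eqn Gamma1delta cap Bj <=1})) and then note that $z\in\Delta(S)$ with $|S|=c_s'$ forces $S=S(z)$. For part (2), however, the paper argues through the girth hypothesis concretely: it uses the uniqueness of $(\alpha,x)$-paths of length $s+1$ to track each neighbour $x_d\in L_x=\Gamma_1(x)\cap\Gamma_s(\alpha)$, shows $\beta_{j_d}^g=\beta_{h_d}$ and $B_{j_d}^g=B_{h_d}$, verifies element-by-element that $\Delta^g(S)\subseteq\Delta(T)$, and obtains the reverse inclusion by repeating the argument with $g^{-1}$. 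You instead name the permutation $\sigma$ of $X$ induced by $g$ on $\Gamma_1(\alpha)$, transport the defining intersection wholesale to get $\Delta(S)^g=\Delta(\sigma(S))$ (legitimate in one step, since a bijection commutes with intersections, so no two-inclusion dance is needed), and then pin down $\sigma(S)=T$ by the uniqueness of the support of $y$ --- in effect deriving part (2) as a formal consequence of equivariance together with the disjointness underlying part (1). Your version is shorter and exposes the logical structure more cleanly; the paper's version is more explicit about the geometry (unique shortest paths) and exhibits the block correspondence $B_{j_d}^g=B_{h_d}$ concretely, which is the same style of computation it reuses elsewhere (e.g.\ Lemma~\ref{lem Delta j gamma 1 to 1 Delta h varepsilon}). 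One word of care for your write-up: when you conclude $\sigma(S)=S(y)$ you implicitly need $\Delta(\sigma(S))\neq\emptyset$; this holds because $y=x^g$ itself lies in it, but it is worth saying explicitly.
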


\begin{proof}
Suppose $S\neq T$ and $\Delta(S)\cap \Delta(T)\neq\emptyset$. Let $x\in \Delta(S)\cap \Delta(T)$. Then $|\Gamma_1(x)\cap B_j|=1$ for each $j\in S\cup T$. So $c_s'=|\Gamma_1(x)\cap \Gamma_s(\alpha)|=\left|\bigcup\limits_{j\in S\cup T}\Gamma_1(x)\cap B_j\right|=\sum\limits_{j\in S\cup T}|\Gamma_1(x)\cap B_j|=|S\cup T| > |S|=|T|=c_s'$. This is a contradiction.

Let $x\in \Delta(S)$ and $y\in \Delta(T)$. Since $G_\alpha$ is transitive on $\Delta$, there exists an element $g\in G_\alpha$ such that $y=x^g$. Let $g$ be such an automorphism. Then $g$ maps $(\alpha,x)$-paths of length $s+1$ to $(\alpha,y)$-paths of length $s+1$. Then $g$ maps $L_x=\Gamma_1(x)\cap \Gamma_s(\alpha)$ to $L_y=\Gamma_1(y)\cap \Gamma_s(\alpha)$. Since $g$ is an automorphism, the restriction $g|_{L_x}$ of $g$ on $L_x$ is injective. The numbers $|L_x|=|L_y|=c_s'$. So $g|_{L_x}$ is a one to one correspondence from $L_x$ to $L_y$. Suppose $L_x=\{x_1,x_2,\ldots,x_c\}$ where $c=c_s'$. Then $L_y=L_x^g=\{y_d=x_d^g\mid d=1,2,\ldots,c\}$. Suppose for $1\leqslant d\leqslant c$, the vertex $x_d\in B_{j_d}$ for some $j_d\in X$. Then $S=\{j_1,j_2,\ldots,j_c\}$. For $1\leqslant d\leqslant c$, the vertex $y_d=x_d^g\in B_{j_d}^g$. There exists exactly one $(\alpha,x)$-path $P_d$ of length $s+1$ such that $x_d$ lies on $P_d$, and there exists exactly one $(\alpha,y)$-path $Q_d$ of length $s+1$ such that $y_d$ lies on $Q_d$. Then $\beta_{j_d}$ lies on $P_d$ and $g$ maps $P_d$ to $Q_d$. Let $\beta_{h_d}\in \Gamma_1(\alpha)$ lie on $Q_d$ for some $h_d\in X$. Then $\beta_{h_d}=\beta_{j_d}^g$ and $y_d\in B_{h_d}$. Hence $T=\{h_1,h_2,\ldots,h_c\}$. The element $g$ maps $\Gamma_s(\alpha)\cap \Gamma_{s-1}(\beta_{j_d})=B_{j_d}$ to $\Gamma_s(\alpha)\cap \Gamma_{s-1}(\beta_{h_d})=B_{h_d}$. Since $|B_{j_d}|=|B_{h_d}|$, we have $B_{j_d}^g=B_{h_d}$. For any $z\in \Delta(S)$, let $\Gamma_1(z)\cap \Gamma_s(\alpha)=\{z_1,z_2\ldots,z_c\}$ and $z_d\in B_{j_d}$ for each $1\leqslant d\leqslant c$. Then $z_d^g\in B_{j_d}^g=B_{h_d}$. Since $z^g\in \Delta$ and $\Gamma_1(z^g)\cap \Gamma_s(\alpha)=\{z_1^g,z_2^g\ldots,z_c^g\}$, we have $z^g\in \Delta(T)$. Thus $\Delta^g(S)\subseteq \Delta(T)$. By the same argument as above, we have $\Delta^{g^{-1}}(T)\subseteq \Delta(S)$. Since $g$ is an automorphism, $\Delta(T)=\Delta^{g^{-1}g}(T)\subseteq \Delta^g(S)$. Hence $\Delta^g(S)=\Delta(T)$.
\end{proof}

Let $S,T\in \mathcal{B}(c_s')$. As a consequence of Lemma~\ref{lem DeltaiS DeltaiT}, we have $|\Delta(S)|=|\Delta(T)|$. This means that the size
\begin{equation}\label{eqn ei}
\e =|\Delta(S)|
\end{equation}
is independent of the choice of $S\in \mathcal{B}(c_s')$. Let
\begin{equation}
\mathcal{P}(u)=\{\Delta(S)\mid S\in \mathcal{B}(u)\}.
\end{equation}
Then $\mathcal{P}(c_s')$ is a partition of $\Delta$ by Lemma~\ref{lem DeltaiS DeltaiT} and Equation~(\ref{eqn Delta = union of DeltaS S in Bu}), and so $|\mathcal{B}(c_s')|\e =|\Delta|$.
Lemma~\ref{lem DeltaiS DeltaiT} also implies that $G_\alpha$ is transitive on $\mathcal{P}(c_s')$. For $1\leqslant v\leqslant k$, let
\begin{eqnarray}\label{eqn B(j)}
\mathcal{B}(u,v) & = & \{S\in \mathcal{B}(u)\mid v\in S\}\nonumber \\
& = & \{S\subseteq X\mid v\in S, |S|=u, \Delta(S)\neq\emptyset\}.
\end{eqnarray}
Then
\begin{eqnarray}
\Delta_j & = & \bigcup\limits_{S\in \mathcal{B}(u,j)}\Delta(S),\label{eqn Deltaj = union of DeltaS S in Buj} \\
B_j & = & \bigcup\limits_{S\in \mathcal{B}(u,j)}B_j(S).\label{eqn Bj union of BjS over S in Buj}
\end{eqnarray}
Let
\begin{eqnarray}
\mathcal{P}(u,v) & = & \{\Delta(S)\mid S\in \mathcal{B}(u,v)\}\\
B_v^{\mathcal{P}(u)} & = & \{B_v(S)\mid S\in \mathcal{B}(u,v)\}.
\end{eqnarray}
Then $\mathcal{P}(c_s',j)=\{\Delta(S)\mid S\in \mathcal{B}(c_s',j)\}$ is a partition of $\Delta_j$ by Lemma~\ref{lem DeltaiS DeltaiT} and Equation~(\ref{eqn Deltaj = union of DeltaS S in Buj}), and so $|\mathcal{B}(c_s',j)|\e=|\Delta_j|$.

\begin{corollary}\label{cor P(cs') is a partition of Delta P(cs',j) is a partition of Deltaj}
Let $j\in X$. Then the set $\mathcal{P}(c_s')$ is a partition of $\Delta$, and the set $\mathcal{P}(c_s',j)$ is a partition of $\Delta_j$. We have the following equations
\begin{eqnarray}
|\mathcal{B}(c_s')|\e & = & |\Delta|={\displaystyle \frac{b_s'}{c_s'}}k(k-1)^{s-1},\label{eqn ei B = Deltai}\\
|\mathcal{B}(c_s',j)|\e & = & |\Delta_j|=b_s'(k-1)^{s-1}.\label{eqn ei Bj = Deltaj}
\end{eqnarray}
\end{corollary}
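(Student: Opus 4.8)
The plan is to treat the corollary as an assembly of facts already prepared in this section, with the one substantive new step being the closed form for $|\Delta|$. For the partition assertions I would argue as follows. Taking $u=c_s'$ in Equation~(\ref{eqn Delta = union of DeltaS S in Bu}) shows that the members of $\mathcal{P}(c_s')$ cover $\Delta$, while part~(1) of Lemma~\ref{lem DeltaiS DeltaiT} shows that distinct members are disjoint; hence $\mathcal{P}(c_s')$ is a partition of $\Delta$. Replacing (\ref{eqn Delta = union of DeltaS S in Bu}) by Equation~(\ref{eqn Deltaj = union of DeltaS S in Buj}) and restricting to $\mathcal{B}(c_s',j)$ gives, by the same two facts, that $\mathcal{P}(c_s',j)$ partitions $\Delta_j$.

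From these partitions the product equalities are immediate: every block $\Delta(S)$ has the common cardinality $\e$ by Equation~(\ref{eqn ei}), and distinct $S\in\mathcal{B}(c_s')$ give distinct (disjoint, nonempty) blocks, so summing block sizes over $\mathcal{B}(c_s')$ yields $|\mathcal{B}(c_s')|\e=|\Delta|$, and the same sum over $\mathcal{B}(c_s',j)$ yields $|\mathcal{B}(c_s',j)|\e=|\Delta_j|$. It then remains to evaluate the two cardinalities. The value $|\Delta_j|=b_s'(k-1)^{s-1}$ was already computed in the text from $|\Delta_j|=b_s'|B_j|$ and $|B_j|=(k-1)^{s-1}$, so I would simply quote it. For $|\Delta|$ I would double count the incidence set $\{(\delta,j)\mid \delta\in\Delta_j\}$. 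Fixing $\delta$, membership $\delta\in\Delta_j$ amounts to $\Gamma_1(\delta)\cap B_j\neq\emptyset$; since $B_1,\dots,B_k$ partition $\Gamma_s(\alpha)$ and $|\Gamma_1(\delta)\cap B_j|\leqslant 1$ by Equation~(\ref{eqn Gamma1delta cap Bj <=1}), the $c_s'=|\Gamma_1(\delta)\cap\Gamma_s(\alpha)|$ neighbours of $\delta$ in $\Gamma_s(\alpha)$ land in $c_s'$ distinct blocks, so $\delta$ belongs to exactly $c_s'$ of the sets $\Delta_j$. Thus the incidence count equals $c_s'|\Delta|$; computing it the other way as $\sum_{j\in X}|\Delta_j|=kb_s'(k-1)^{s-1}$ and equating gives $|\Delta|=\frac{b_s'}{c_s'}k(k-1)^{s-1}$.

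The argument is essentially bookkeeping and I foresee no serious obstacle. The only point demanding care is the claim that each $\delta$ lies in exactly $c_s'$ of the sets $\Delta_j$: this uses the bound $|\Gamma_1(\delta)\cap B_j|\leqslant 1$, itself a consequence of the girth hypothesis $\lambda_{\beta_j}(\Gamma)\geqslant 2s+1$, to guarantee that the $c_s'$ neighbours really are spread across $c_s'$ different blocks rather than clustered in fewer, which is exactly what converts the incidence count into the stated formula.
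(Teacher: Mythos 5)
Your proposal is correct and takes essentially the same route as the paper: the partition statements are assembled from Lemma~\ref{lem DeltaiS DeltaiT} together with Equations~(\ref{eqn Delta = union of DeltaS S in Bu}) and~(\ref{eqn Deltaj = union of DeltaS S in Buj}), exactly as in the discussion preceding the corollary, and the block sizes are summed using Equation~(\ref{eqn ei}). Your double count of the incidences $(\delta,j)$ with $\delta\in\Delta_j$ is equivalent to the paper's count of edges between $\Gamma_s(\alpha)$ and $\Delta$ (carried out in the proof of Lemma~\ref{lem deltait}), since the bound $|\Gamma_1(\delta)\cap B_j|\leqslant 1$ from Equation~(\ref{eqn Gamma1delta cap Bj <=1}) puts those incidences in bijection with such edges, so both yield $|\Delta|c_s'=b_s'k(k-1)^{s-1}$.
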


Let
\begin{eqnarray*}
S\in \mathcal{B}(c_s',j)&=&\{S\in \mathcal{B}(c_s')\mid j\in S\}\\
&=&\{S\subseteq X\mid j\in S, |S|=c_s', \Delta(S)\neq\emptyset\}.
\end{eqnarray*}
Then $1\leqslant |\Gamma_1(\gamma)\cap \Delta(S)|\leqslant b_s'$ for each $\gamma\in B_j(S)$. We now consider the general case. Let $j\in X$ and let $\gamma\in B_j$. Let
\begin{eqnarray*}
B_j^\gamma&=&\{S\in \mathcal{B}(c_s',j)\mid \Gamma_1(\gamma)\cap \Delta(S)\neq\emptyset\},\\
\Delta_j^\gamma&=&\{\Gamma_1(\gamma)\cap \Delta(S)\mid S\in B_j^\gamma\}.
\end{eqnarray*}
Then $\Delta_j^\gamma$ is a partition of $\Gamma_1(\gamma)\cap \Delta_j=\Gamma_1(\gamma)\cap \Delta$ (see Figure~\ref{fig: partitionofbs}) and
\begin{equation}\label{eqn bs'= sum of Gamma1gamma cap DeltaS}
b_s'=\sum\limits_{S\in B_j^\gamma}|\Gamma_1(\gamma)\cap \Delta(S)|.
\end{equation}

\begin{lemma}\label{lem Delta j gamma 1 to 1 Delta h varepsilon}
Let $\gamma\in B_j$ and $\varepsilon\in B_h$, and let $g\in G_\alpha$ such that $\varepsilon=\gamma^g$. Then $g$ induces a bijection from $\Delta_j^\gamma$ to $\Delta_h^\varepsilon$, and $g$ induces a bijection from $B_j^\gamma$ to $B_h^\varepsilon$.
\end{lemma}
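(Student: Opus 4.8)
The plan is to show that the automorphism $g$, which fixes $\alpha$ and hence permutes the neighbours $\Gamma_1(\alpha)=\{\beta_1,\ldots,\beta_k\}$, transports the entire combinatorial structure attached to $\gamma$ to the one attached to $\varepsilon$. Write $\sigma$ for the permutation of $X$ induced by $g$, defined by $\beta_i^g=\beta_{\sigma(i)}$. The first and most substantial step is to pin down $\sigma(j)$. Since $\gamma\in B_j\subseteq\Gamma_s(\alpha)$ we have $\partial_\Gamma(\alpha,\gamma)=s$ and $\partial_\Gamma(\beta_j,\gamma)=s-1$, so concatenating the edge $\alpha\beta_j$ with a shortest $(\beta_j,\gamma)$-path produces an $(\alpha,\gamma)$-path of length $s$, which by Lemma~\ref{lem alpha girth >=2s+2}(1) is the unique shortest such path; its neighbour of $\alpha$ is therefore $\beta_j$. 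As $g$ fixes $\alpha$, preserves distances, and sends $\gamma$ to $\varepsilon$, it carries this path to the unique shortest $(\alpha,\varepsilon)$-path, whose neighbour of $\alpha$ is $\beta_h$ because $\varepsilon\in B_h$. Hence $\beta_j^g=\beta_h$, that is $\sigma(j)=h$; equivalently $B_j^g=\Gamma_s(\alpha)\cap\Gamma_{s-1}(\beta_h)=B_h$.

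Next I would record the transport formula for the sets $\Delta(S)$. For $S\subseteq X$ with $|S|=c_s'$, using $\Delta^g=\Delta$ (as $\Delta$ is a $G_\alpha$-orbit and $g\in G_\alpha$) together with $\Gamma_s(\beta_i)^g=\Gamma_s(\beta_{\sigma(i)})$, one gets
\[
\Delta(S)^g=\Bigl(\Delta\cap\bigcap_{i\in S}\Gamma_s(\beta_i)\Bigr)^g=\Delta\cap\bigcap_{i\in S}\Gamma_s(\beta_{\sigma(i)})=\Delta(\sigma(S)).
\]
Since $g$ is a bijection, $\Delta(S)\neq\emptyset$ if and only if $\Delta(\sigma(S))\neq\emptyset$, and $\sigma$ preserves cardinalities; combined with $\sigma(j)=h$ this shows $\sigma$ restricts to a bijection $\mathcal{B}(c_s',j)\to\mathcal{B}(c_s',h)$. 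Now fix $S\in\mathcal{B}(c_s',j)$. From $\Gamma_1(\gamma)^g=\Gamma_1(\gamma^g)=\Gamma_1(\varepsilon)$ and the formula above,
\[
\bigl(\Gamma_1(\gamma)\cap\Delta(S)\bigr)^g=\Gamma_1(\varepsilon)\cap\Delta(\sigma(S)),
\]
so $\Gamma_1(\gamma)\cap\Delta(S)\neq\emptyset$ if and only if $\Gamma_1(\varepsilon)\cap\Delta(\sigma(S))\neq\emptyset$. By the definitions of $B_j^\gamma$ and $B_h^\varepsilon$ this says precisely that $S\mapsto\sigma(S)$ maps $B_j^\gamma$ into $B_h^\varepsilon$; applying the same argument to $g^{-1}$ gives the reverse inclusion, so $\sigma$ restricts to a bijection $B_j^\gamma\to B_h^\varepsilon$.

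Finally I would deduce the bijection on the partitions. The last display shows that $g$ sends each block $\Gamma_1(\gamma)\cap\Delta(S)$ of $\Delta_j^\gamma$ to the block $\Gamma_1(\varepsilon)\cap\Delta(\sigma(S))$ of $\Delta_h^\varepsilon$. By Lemma~\ref{lem DeltaiS DeltaiT}(1) the sets $\Delta(S)$ for distinct $S\in\mathcal{B}(c_s')$ are pairwise disjoint, so distinct $S\in B_j^\gamma$ index distinct nonempty blocks; since $S\mapsto\sigma(S)$ is already a bijection $B_j^\gamma\to B_h^\varepsilon$, the induced block map $\Delta_j^\gamma\to\Delta_h^\varepsilon$ is a bijection as well. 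I expect the one genuinely non-formal step to be the first one, the verification that $\sigma(j)=h$, since it is exactly here that the large local girth enters, through the uniqueness of shortest paths in Lemma~\ref{lem alpha girth >=2s+2}; the remaining steps are a routine transport of the orbit and intersection structure under $g$.
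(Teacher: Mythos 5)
Your proof is correct and follows essentially the same route as the paper: both hinge on the fact that $g$ maps $\beta_j$ to $\beta_h$ and then transport the block structure $\Gamma_1(\gamma)\cap\Delta(S)$ under $g$ to obtain both bijections. The only difference is one of completeness rather than method: the paper simply asserts $\beta_j^g=\beta_h$ and invokes the correspondence $\Delta^g(S)=\Delta(T)$ from Lemma~\ref{lem DeltaiS DeltaiT}, whereas you justify $\beta_j^g=\beta_h$ explicitly via uniqueness of shortest paths (Lemma~\ref{lem alpha girth >=2s+2}) and work with the induced permutation $\sigma$ of $X$, which makes the argument self-contained.
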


\begin{proof}
This automorphism $g$ also maps $\beta_j$ to $\beta_h$, and so $\Delta_j^g=\Delta_h$. Then \begin{center}$(\Gamma_1(\gamma)\cap \Delta_j)^g\subseteq \Gamma_1^g(\gamma)\cap \Delta_j^g=\Gamma_1(\gamma^g)\cap \Delta_h\newline =\Gamma_1(\varepsilon)\cap \Delta_h=(\Gamma_1(\varepsilon)\cap \Delta_h)^{g^{-1}g}\subseteq (\Gamma_1(\gamma)\cap \Delta_j)^g$.\end{center} This means $(\Gamma_1(\gamma)\cap \Delta_j)^g=\Gamma_1(\varepsilon)\cap \Delta_h$. By the same argument as above, for each $S\in B_j^\gamma$, we have $(\Gamma_1(\gamma)\cap \Delta(S))^g=\Gamma_1^g(\gamma)\cap \Delta^g(S)=\Gamma_1(\varepsilon)\cap \Delta(T)$ where $T\in B_h^\varepsilon$ and $\Delta^g(S)=\Delta(T)$. We know
\begin{eqnarray*}
\Gamma_1(\gamma)\cap \Delta_j&=&\bigcup\limits_{S\in B_j^\gamma}\Gamma_1(\gamma)\cap \Delta(S),\\
\Gamma_1(\varepsilon)\cap \Delta_h&=&\bigcup\limits_{T\in B_h^\varepsilon}\Gamma_1(\varepsilon)\cap \Delta(T).
\end{eqnarray*}
Hence $(\Gamma_1(\gamma)\cap \Delta_j)^g=\bigcup\limits_{S\in B_j^\gamma}(\Gamma_1(\gamma)\cap \Delta(S))^g=\bigcup\limits_{T\in B_h^\varepsilon}\Gamma_1(\varepsilon)\cap \Delta(T)=\Gamma_1(\varepsilon)\cap \Delta_h$. This completes the proof.
\end{proof}

Let $\gamma\in \Gamma_s(\alpha)$. Then $\gamma\in B_j$ for some $j\in X$. By Lemma~\ref{lem Delta j gamma 1 to 1 Delta h varepsilon}, the multiset $\left[|\Gamma_1(\gamma)\cap \Delta(S)|~\Bigl|~ S\in B_j^\gamma\right]$ is independent of the choice of $\gamma$ and $j$. Define
\begin{equation}
P_{s+1}(\alpha,\Delta)=\left[|\Gamma_1(\gamma)\cap \Delta(S)|~\Bigl|~ S\in B_j^\gamma\right].
\end{equation}
By Equation~(\ref{eqn bs'= sum of Gamma1gamma cap DeltaS}), we know $P_{s+1}(\alpha,\Delta)$ corresponds to a partition of the integer $b_s'$. If $b_s'=1$, then $P_{s+1}(\alpha,\Delta)=[1]$. If $b_s'=2$, then $P_{s+1}(\alpha,\Delta)=[2]$ or $[1,1]$. If for any $S\in B_j^\gamma$, we have $|\Gamma_1(\gamma)\cap\Delta(S)|=b$, then we have $b_s'=mb$ and $P_{s+1}(\alpha,\Delta)=[b,b\ldots,b]$ where $m=|B_j^\gamma|$. We will consider the adjacency relations between $\Gamma_s(\alpha)$ and $\Delta$ of these special cases of $P_{s+1}(\alpha,\Delta)$ in Section~\ref{sec Adjacency relations}.

\begin{figure}[htb]
\centering
\includegraphics[width=0.80\textwidth]{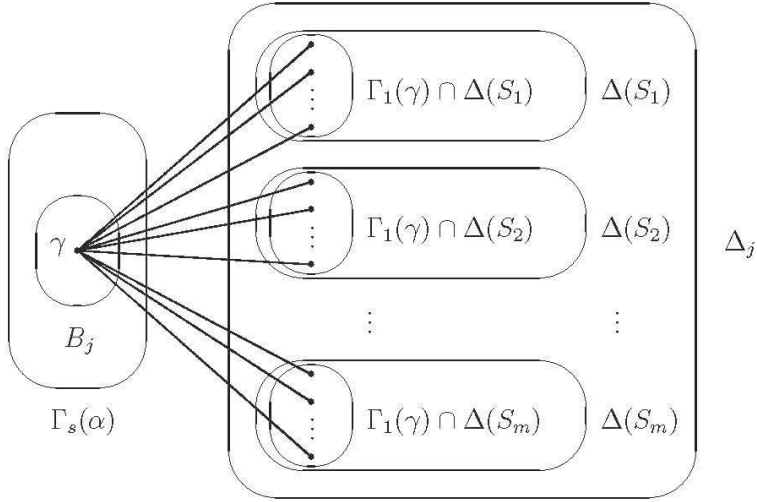}
\caption{The distribution of $\Gamma_1(\gamma)$ in $\Delta_j$ where $\gamma\in B_j$, $m=|B_j^\gamma|$ and $B_j^\gamma=\{S_1,S_2,\ldots,S_m\}$.}
\label{fig: partitionofbs}
\end{figure}

\begin{lemma}\label{lem DeltaiTt DeltaiT't}
Let $T$ and $T'$ be $t$-subsets of $X$. Then there exists $g\in G_\alpha$ such that $\Delta(T)^g=\Delta(T')$.
\end{lemma}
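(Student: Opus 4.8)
The plan is to extract the required automorphism directly from the $t$-homogeneity hypothesis, and then check that it moves the sets $\Delta(T)$ as desired. Identify $X=\{1,\dots,k\}$ with $\Gamma_1(\alpha)=\{\beta_1,\dots,\beta_k\}$ via $j\leftrightarrow\beta_j$, so that a $t$-subset $T\subseteq X$ corresponds to the $t$-subset $\{\beta_j\mid j\in T\}$ of $\Gamma_1(\alpha)$. Under Hypothesis~\ref{hypothesis} the local action of $G_\alpha$ on $\Gamma_1(\alpha)$ is $t$-homogeneous, i.e.\ transitive on the unordered $t$-subsets of $\Gamma_1(\alpha)$. Hence there is $g\in G_\alpha$ carrying $\{\beta_j\mid j\in T\}$ onto $\{\beta_j\mid j\in T'\}$. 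This $g$ determines a permutation $\sigma$ of $X$ by the rule $\beta_j^g=\beta_{\sigma(j)}$, and by the choice of $g$ we have $\sigma(T)=T'$.

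The next step is to convert this control over the images of the $\beta_j$ into control over the sets $\Delta_j$. First I would establish the intermediate identity $\Delta_j^g=\Delta_{\sigma(j)}$ for every $j\in X$. This uses the description $\Delta_j=\Delta\cap\Gamma_s(\beta_j)$ recorded in the setup of this section: since $g\in G_\alpha$ fixes the $G_\alpha$-orbit $\Delta$ setwise (so $\Delta^g=\Delta$) and $g$ is a distance-preserving automorphism with $\beta_j^g=\beta_{\sigma(j)}$ (so $\Gamma_s(\beta_j)^g=\Gamma_s(\beta_{\sigma(j)})$), we obtain $\Delta_j^g=\Delta^g\cap\Gamma_s(\beta_j)^g=\Delta\cap\Gamma_s(\beta_{\sigma(j)})=\Delta_{\sigma(j)}$.

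The final step assembles these facts. Because $g$ is a bijection it commutes with intersections, so $\Delta(T)^g=\bigl(\bigcap_{j\in T}\Delta_j\bigr)^g=\bigcap_{j\in T}\Delta_j^g=\bigcap_{j\in T}\Delta_{\sigma(j)}=\bigcap_{h\in T'}\Delta_h=\Delta(T')$, where the penultimate equality is just the reindexing $\sigma(T)=T'$. This gives the claim. Note that $T,T'$ being $t$-subsets with $t\leqslant c_s'$ guarantees that $\Delta(T)$ and $\Delta(T')$ are among the legitimately defined sets $\Delta(S)$ with $|S|\leqslant c_s'$.

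There is no serious obstacle here: the lemma is essentially a bookkeeping consequence of the $t$-homogeneity hypothesis together with the structural identity $\Delta_j=\Delta\cap\Gamma_s(\beta_j)$. The one point that deserves care is that the element $g$ produced by $t$-homogeneity a priori knows only about the action on $\Gamma_1(\alpha)$, so one must verify that it really permutes the sets $\Delta_j$ according to its action on the $\beta_j$; this is exactly the identity $\Delta_j^g=\Delta_{\sigma(j)}$, and it relies on $\Delta$ being a single $G_\alpha$-orbit so that $\Delta^g=\Delta$. Alternatively, one could avoid naming the permutation $\sigma$ altogether by writing $\Delta(T)=\Delta\cap\bigcap_{j\in T}C_j$ and invoking the transitivity of $G_\alpha$ on the family $\bigl\{\bigcap_{j\in A}C_j\mid A\subseteq X,\ |A|=t\bigr\}$ already established earlier in this section.
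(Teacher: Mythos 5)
Your proof is correct and follows essentially the same route as the paper: both obtain $g\in G_\alpha$ from the $t$-homogeneity of the local action, and then compute $\Delta(T)^g$ using $\Delta^g=\Delta$ (since $\Delta$ is a $G_\alpha$-orbit) together with $\Gamma_s(\beta_j)^g=\Gamma_s(\beta_j^g)$. The only cosmetic difference is that you invoke the fact that a bijection commutes with intersections to get $\Delta(T)^g=\Delta(T')$ in one step, whereas the paper derives the same equality by the two-sided inclusion argument using $g$ and $g^{-1}$.
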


\begin{proof}
Suppose $T=\{j_1,j_2,\ldots,j_t\}$ and $T'=\{j_1',j_2',\ldots,j_t'\}$. Since the local action of $G_\alpha$ on $\Gamma_1(\alpha)$ is $t$-homogeneous, there exists an element $g\in G_\alpha$ such that $\{\beta_j\mid j\in T\}^g=\{\beta_j\mid j\in T'\}$. Without loss of generality, we may assume $\beta_{j_h}^g=\beta_{j_h'}$ for $1\leqslant h\leqslant t$. So $\Delta^g=\Delta$ and $\Gamma_s(\beta_{j_h})^g=\Gamma_s(\beta_{j_h}^g)=\Gamma_s(\beta_{j_h'})$ for $1\leqslant h\leqslant t$. By definition, we have $\Delta(T)^g=\left(\Delta\cap \left(\bigcap\limits_{1\leqslant h\leqslant t}\Gamma_s(\beta_{j_h})\right)\right)^g\subseteq \Delta^g\cap \left(\bigcap\limits_{1\leqslant h\leqslant t}\Gamma_s(\beta_{j_h})^g\right)=\Delta\cap \left(\bigcap\limits_{1\leqslant h\leqslant t}\Gamma_s(\beta_{j_h'})\right)=\Delta(T')$, that is $\Delta(T)^g\subseteq \Delta(T')$. By the same argument, we have $\Delta(T')^{g^{-1}}\subseteq \Delta(T)$. Since $g$ is an automorphism, $\Delta(T')=\Delta(T')^{g^{-1}g}\subseteq \Delta(T)^g$. Hence $\Delta(T)^g=\Delta(T')$.
\end{proof}

By Lemma~\ref{lem DeltaiTt DeltaiT't}, we have $|\Delta(T)|=|\Delta(T')|$ for any $t$-subsets $T$ and $T'$ of $X$. This means that the size
\begin{equation}\label{eqn deltait}
\delta_t=|\Delta(T)|
\end{equation}
is independent of the choice of $T$. Note that $c_s'\geqslant t$. Take $x\in \Delta$. Let $S=S(x)$. Then $|S|=c_s'$ and $x\in \Delta(S)\neq\emptyset$. Let $T\subseteq S$ with $|T|=t$. Then $\Delta(S)\subseteq \Delta(T)\neq\emptyset$. By Lemma~\ref{lem DeltaiTt DeltaiT't}, for any $t$-subset $T'$ of $X$ we have $|\Delta(T')|=|\Delta(T)|\neq 0$ and so $\Delta(T')\neq\emptyset$. Hence $\delta_t\neq 0$ and
\begin{eqnarray*}
\mathcal{B}(t)&=&\{T\subseteq X\mid |T|=t\text{ and }\Delta(T)\neq\emptyset\}\\
&=&\{T\subseteq X\mid |T|=t\}.
\end{eqnarray*}
Let $1\leqslant j\leqslant t$. Since $t$-homogeneous implies $j$-homogeneous, by the same argument as above, we have $\mathcal{B}(j)=\{T\subseteq X\mid |T|=j\}.$

Here we use the convention that $\binom{r}{0}=1$ for any $r\geqslant 0$.

\begin{lemma}\label{lem deltait}
Let $\delta_t$ be as in Equation~\ref{eqn deltait}. Then
\begin{equation}
\delta_t=|\Delta|\binom{c_s'}{t}\biggl/\binom{k}{t}=b_s'(k-1)^{s-1}\binom{c_s'-1}{t-1}\biggl/\binom{k-1}{t-1}.
\end{equation}
\end{lemma}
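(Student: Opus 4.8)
The plan is to compute $\delta_t$ by double counting the incident pairs $(T,\delta)$, where $T$ ranges over the $t$-subsets of $X$ and $\delta\in\Delta(T)$. The two ingredients I would use are already in place just before the statement: first, by Lemma~\ref{lem DeltaiTt DeltaiT't} every $t$-subset $T$ of $X$ satisfies $\Delta(T)\neq\emptyset$ and $|\Delta(T)|=\delta_t$, a common value; second, for each $\delta\in\Delta$ the set $S(\delta)=\{j\in X\mid \partial_\Gamma(\delta,\beta_j)=s\}$ has size exactly $c_s'$, and from the characterization $\Delta(T)=\{\delta\in\Delta\mid \partial_\Gamma(\delta,\beta_j)=s \text{ for all } j\in T\}$ one reads off that $\delta\in\Delta(T)$ if and only if $T\subseteq S(\delta)$.

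With these, counting the pairs $(T,\delta)$ in the two natural orders gives the identity directly. Summing over $T$ first yields $\sum_{|T|=t}|\Delta(T)|=\binom{k}{t}\,\delta_t$, since there are $\binom{k}{t}$ such subsets, each contributing $\delta_t$. Summing over $\delta$ first, each $\delta\in\Delta$ lies in $\Delta(T)$ for precisely those $T$ with $T\subseteq S(\delta)$, of which there are $\binom{c_s'}{t}$; hence the total is $|\Delta|\binom{c_s'}{t}$. Equating the two counts gives $\delta_t=|\Delta|\binom{c_s'}{t}\big/\binom{k}{t}$, which is the first displayed equality.

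For the second equality I would substitute the value $|\Delta|=\frac{b_s'}{c_s'}k(k-1)^{s-1}$ from Equation~(\ref{eqn ei B = Deltai}) and simplify using the standard identity $\binom{n}{t}=\frac{n}{t}\binom{n-1}{t-1}$. Applying this to both $\binom{c_s'}{t}$ and $\binom{k}{t}$ turns the binomial ratio into $\frac{c_s'}{k}\binom{c_s'-1}{t-1}\big/\binom{k-1}{t-1}$, and the factors $c_s'$ and $k$ cancel against those in $|\Delta|$, leaving $\delta_t=b_s'(k-1)^{s-1}\binom{c_s'-1}{t-1}\big/\binom{k-1}{t-1}$.

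The routine calculations are all elementary, so there is no real obstacle; the only point requiring care is the equivalence $\delta\in\Delta(T)\Leftrightarrow T\subseteq S(\delta)$ together with $|S(\delta)|=c_s'$, which is what licenses the inner count $\binom{c_s'}{t}$ and, implicitly, the fact that $(X,\mathcal{B}(c_s'))$ is a $t$-$(k,c_s',\delta_t/\e)$ design. Phrasing the argument through that design and its standard parameter formulae would be an equally valid alternative, but the direct double count is shorter and self-contained.
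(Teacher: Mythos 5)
Your proof is correct and follows essentially the same route as the paper: the paper also double counts the incidence pairs $(\delta,\Delta(T))$ over $T\in\mathcal{B}(t)$, obtaining $\delta_t\binom{k}{t}=|\Delta|\binom{c_s'}{t}$, and then substitutes $|\Delta|c_s'=b_s'k(k-1)^{s-1}$ (which you cite via Equation~(\ref{eqn ei B = Deltai}) rather than re-deriving by edge counting) before simplifying the binomial ratio. The differences are purely cosmetic.
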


\begin{proof}
Let \begin{center}$Y_t=\Bigl\{(\delta,\Delta(T))\mid \delta\in \Delta, T\in \mathcal{B}(t), \text{ and }\delta\in \Delta(T)\Bigr\}$.\end{center} We will count the size of $Y_t$ in two ways. For each $T\in \mathcal{B}(t)$, the set $\Delta(T)$ has $\delta_t$ elements. So \begin{center}$\#Y_t=\delta_t|\mathcal{B}(t)|=\delta_t\binom{k}{t}$.\end{center} For any $\delta\in \Delta$, the set $S(\delta)=\{j\in X\mid \partial_\Gamma(\delta,\beta_j)=s\}$ has $c_s'$ elements. Let $S=S(\delta)$. So $\delta\in \Delta(S)$ and there are $\binom{c_s'}{t}$ elements $T\in \mathcal{B}(t)$ such that $T\subseteq S$ which implies $\Delta(S)\subseteq \Delta(T)$. Hence $\#Y_t=|\Delta|\binom{c_s'}{t}$. By counting edges between $\Gamma_s(\alpha)$ and $\Delta$, we have
\begin{equation}
|\Delta|c_s'=|\Gamma_s(\alpha)|b_s'=k_sb_s'=b_s'k(k-1)^{s-1}.
\end{equation}
So \begin{center}$\#Y_t=|\Delta|\binom{c_s'}{t}=b_s'k(k-1)^{s-1}\binom{c_s'}{t}/c_s'$.\end{center} Thus $\delta_t\binom{k}{t}=\#Y_t=b_s'k(k-1)^{s-1}\binom{c_s'}{t}/c_s'$. So $\delta_t=b_s'k(k-1)^{s-1}\binom{c_s'}{t}\Bigl/\left(c_s'\binom{k}{t}\right)=b_s'(k-1)^{s-1}\binom{c_s'-1}{t-1}\Bigl/\binom{k-1}{t-1}$.
\end{proof}

Let $T\subseteq X$ with $|T|\leqslant u$ and let
\begin{eqnarray}
\mathcal{B}(u;T)&=&\bigcap\limits_{j\in T}\mathcal{B}(u,j)=\{S\in \mathcal{B}(u)\mid T\subseteq S\},\\
\mathcal{P}(u;T)&=&\bigcap\limits_{j\in T}\mathcal{P}(u,j)=\{\Delta(S)\mid S\in \mathcal{B}(u;T)\}\nonumber\\
&=&\{\Delta(S)\mid T\subseteq S\in \mathcal{B}(u)\}.
\end{eqnarray}
For any $S\in \mathcal{B}(u;T)$, we know $\Delta(S)\subseteq \Delta(T)$.
Then
\begin{equation}
\Delta(T)=\bigcup\limits_{S\in \mathcal{B}(u;T)}\Delta(S).
\end{equation}
Hence the set $\mathcal{P}(c_s';T)$ forms a partition of $\Delta(T)$ by Lemma~\ref{lem DeltaiS DeltaiT}.

%For any $S\in \mathcal{B}(T)=\bigcap\limits_{j\in T}\mathcal{B}(j)=\{S\in \mathcal{B}\mid T\subseteq S\}$, we have $\Delta_i^S\subseteq \Delta_{i,T}^{\{t\}}$. The set $\mathcal{P}(T)=\bigcap\limits_{j\in T}\mathcal{P}(j)=\{\Delta_i^S\mid S\in \mathcal{B}(T)\}=\{\Delta_i^S\mid T\subseteq S\in \mathcal{B}\}$ forms a partition of $\Delta_{i,T}^{\{t\}}$ by Lemma~\ref{lem DeltaiS DeltaiT}.

\begin{theorem}\label{theorem t-design}
Let $T$ be a $t$-subset of $X$. Then the size $\lambda_t=|\mathcal{B}(c_s';T)|$ is independent of the choice of $T$. As a consequence, $(X,\mathcal{B}(c_s'))$ is a $t$-$(k,c_s',\lambda_t)$ design where
\begin{equation}\label{eqn lambdait ei = deltait}
\lambda_t\e =\delta_t=b_s'(k-1)^{s-1}\binom{c_s'-1}{t-1}\biggl/\binom{k-1}{t-1}.
\end{equation}
In particular, if $t=c_s'$, then $\mathcal{B}(c_s')$ consists of all $c_s'$-subsets of $X$, $\lambda_{c_s'}=1$, and $(X,\mathcal{B}(c_s'))$ is a $c_s'$-$(k,c_s',1)$ design.
\end{theorem}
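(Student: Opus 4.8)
The plan is to read off the independence of $\lambda_t$ and the design property directly from the partition structure established immediately before the statement, combined with the cardinality formula of Lemma~\ref{lem deltait}. The whole argument is essentially a uniform-partition count, so almost all the real work has already been done in the preceding lemmas.

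First I would fix a $t$-subset $T$ of $X$ and recall that $\mathcal{P}(c_s';T)=\{\Delta(S)\mid S\in \mathcal{B}(c_s';T)\}$ is a partition of $\Delta(T)$. The key point is that this partition is \emph{uniform}: by Lemma~\ref{lem DeltaiS DeltaiT} every block $\Delta(S)$ with $S\in \mathcal{B}(c_s')$ has the same cardinality $\e$ (Equation~(\ref{eqn ei})), while $|\Delta(T)|=\delta_t$ by definition. Counting the elements of $\Delta(T)$ block by block over the $|\mathcal{B}(c_s';T)|=\lambda_t$ parts then yields
\[
\lambda_t\,\e = |\mathcal{B}(c_s';T)|\cdot \e = |\Delta(T)| = \delta_t .
\]
Since $\e$ is independent of the block (Lemma~\ref{lem DeltaiS DeltaiT}) and $\delta_t$ is independent of $T$ (Lemma~\ref{lem DeltaiTt DeltaiT't}), the ratio $\lambda_t=\delta_t/\e$ is independent of $T$ as well; substituting the evaluation of $\delta_t$ from Lemma~\ref{lem deltait} gives exactly Equation~(\ref{eqn lambdait ei = deltait}).

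With this identity in hand the design property is just a check against the definition: the ground set $X$ has $k$ elements, every $S\in \mathcal{B}(c_s')$ is by construction a $c_s'$-subset, and each $t$-subset $T$ lies in precisely $\lambda_t$ blocks. Hence $(X,\mathcal{B}(c_s'))$ is a $t$-$(k,c_s',\lambda_t)$ design. For the special case $t=c_s'$ I would use two observations. Since $t$-homogeneity forces $\mathcal{B}(t)$ to consist of all $t$-subsets of $X$ (as recorded just before Lemma~\ref{lem deltait}), taking $t=c_s'$ shows $\mathcal{B}(c_s')$ is the family of all $c_s'$-subsets. And when $|T|=c_s'$ the only block containing $T$ is $T$ itself, so $\lambda_{c_s'}=1$; equivalently $\Delta(T)=\Delta(S)$ forces $\delta_{c_s'}=\e$, giving $\lambda_{c_s'}=1$ directly from the boxed identity. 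Thus $(X,\mathcal{B}(c_s'))$ is a $c_s'$-$(k,c_s',1)$ design.

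I do not anticipate a genuine obstacle. The substantive content—disjointness and equal size of the blocks $\Delta(S)$, the transitivity giving independence of $\delta_t$, and the explicit value of $\delta_t$—is already supplied by Lemmas~\ref{lem DeltaiS DeltaiT}, \ref{lem DeltaiTt DeltaiT't}, and~\ref{lem deltait}. The only care required is bookkeeping: keeping straight that $\e$ and $\delta_t$ are the two quantities (both independent of $T$) whose ratio defines $\lambda_t$, and noting that these collapse to a single value when $t=c_s'$.
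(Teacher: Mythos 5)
Your proof of the main claim is correct and is essentially the paper's own argument: both read off $\delta_t=|\mathcal{B}(c_s';T)|\,\e$ from the uniform partition $\mathcal{P}(c_s';T)$ of $\Delta(T)$ (blocks of common size $\e$ by Lemma~\ref{lem DeltaiS DeltaiT}, $\delta_t$ independent of $T$ by Lemma~\ref{lem DeltaiTt DeltaiT't}), then substitute Lemma~\ref{lem deltait}. The only place you diverge is the special case $t=c_s'$: the paper re-invokes the double count $\#Y_t$ from the proof of Lemma~\ref{lem deltait} together with Equation~(\ref{eqn ei B = Deltai}) to deduce $\delta_t=\e$ and hence $\lambda_{c_s'}=1$, whereas you observe directly that once $\mathcal{B}(c_s')$ is all $c_s'$-subsets, a $c_s'$-subset $T$ is contained in exactly one block, namely $T$ itself. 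Your route is more elementary and avoids reopening the earlier counting argument; the paper's route has the mild advantage of exhibiting the identity $\delta_{c_s'}=\e$ explicitly, but both are valid and rest on the same prior fact that $t$-homogeneity makes $\mathcal{B}(t)$ the full set of $t$-subsets.
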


\begin{proof}
Since $\mathcal{P}(c_s';T)$ forms a partition of $\Delta(T)$, we have $\delta_t=|\Delta(T)|=\left|\bigcup\limits_{S\in \mathcal{B}(c_s';T)}\Delta(S)\right|=\sum\limits_{S\in \mathcal{B}(c_s';T)}|\Delta(S)|=|\mathcal{B}(c_s';T)|\e$. So the size $\lambda_t=|\mathcal{B}(c_s';T)|$ is independent of the choice of $T$ and $\lambda_t\e =\delta_t=b_s'(k-1)^{s-1}\binom{c_s'-1}{t-1}\Bigl/\binom{k-1}{t-1}$ by Lemma~\ref{lem deltait}. Thus $(X,\mathcal{B}(c_s'))$ is a $t$-$(k,c_s',\lambda_t)$ design.

Suppose $t=c_s'$. Then $\mathcal{B}(c_s')=\mathcal{B}(t)=\{S\subseteq X\mid |S|=t\}$. We use the equations of $\#Y_t$ in the proof of Lemma~\ref{lem deltait} and Equation~\ref{eqn ei B = Deltai}. We have $\delta_t|\mathcal{B}(t)|=\# Y_t=|\Delta|\binom{c_s'}{t}=|\Delta|=|\mathcal{B}(c_s')|\e$, and so $\delta_t=\e$. Hence $\lambda_t=1$.
%
%there exists $S\in X^{\{c_i'\}}$ such that $\Delta_i^S\neq \emptyset$. Note that in this case, we have $\Delta_i^S=\Delta_{i,S}^{\{c_i'\}}$. By Lemma~\ref{lem DeltaiTt DeltaiT't}, the set $\Delta_i^T=\Delta_{i,T}^{\{c_i'\}}\neq \emptyset$ for any $T\in X^{\{c_i'\}}$. Then $\mathcal{B}=X^{\{c_i'\}}$ and so $|\mathcal{B}|=\binom{k}{c_i'}$., we have $\lambda_i^{\{c_i'\}}\e_i=\delta_i^{\{c_i'\}}=|\Delta_i|\binom{c_i'}{c_i'}\Bigl/\binom{k}{c_i'}
%=\e_i|\mathcal{B}|\Bigl/\binom{k}{c_i'}=\e_i$. Thus $\lambda_i^{\{c_i'\}}=1$.
\end{proof}

%To end this paper, we give an application of our theory to cubic graphs.

In Theorem~\ref{theorem t-design}, we say that the $t$-$(k,c_s',\lambda_t)$ design $D=(X,\mathcal{B}(c_s'))$ is relative to the $G_\alpha$-orbit $\Delta$, or we write $D(\Delta)=(X,\mathcal{B}(c_s'))$ to specify the $G_\alpha$-orbit $\Delta$. The following is a direct consequence of Theorem~\ref{theorem t-design}.

\begin{corollary}\label{cor t=1 1-design}
If $t=1$, then $(X,\mathcal{B}(c_s'))$ is a $1$-$(k,c_s',\lambda_1)$ design where
\begin{equation}\label{eqn lambda1 e = delta1}
\lambda_1\e =\delta_1=|\Delta_j|=b_s'(k-1)^{s-1}.
\end{equation}
\end{corollary}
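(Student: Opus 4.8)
The plan is simply to specialize Theorem~\ref{theorem t-design} to the case $t=1$, so the work is purely a matter of evaluating the formula and identifying the quantities involved; there is no genuine obstacle. First I would invoke Theorem~\ref{theorem t-design} directly: taking $t=1$, it asserts that $\lambda_1=|\mathcal{B}(c_s';T)|$ is independent of the $1$-subset $T$ and that $(X,\mathcal{B}(c_s'))$ is a $1$-$(k,c_s',\lambda_1)$ design satisfying $\lambda_1\e=\delta_1=b_s'(k-1)^{s-1}\binom{c_s'-1}{0}\bigl/\binom{k-1}{0}$. This requires only $1$-homogeneity, i.e.\ transitivity of $G_\alpha$ on $\Gamma_1(\alpha)$, which is already part of Hypothesis~\ref{hypothesis} and in any case is implied by $t$-homogeneity for every $t\geqslant 1$.

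Next I would evaluate the binomial factors. Using the convention $\binom{r}{0}=1$ for $r\geqslant 0$ recorded just before Lemma~\ref{lem deltait}, both $\binom{c_s'-1}{0}$ and $\binom{k-1}{0}$ equal $1$, so the right-hand side collapses and I obtain $\lambda_1\e=\delta_1=b_s'(k-1)^{s-1}$, which is the displayed equation~(\ref{eqn lambda1 e = delta1}).

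Finally I would identify $\delta_1$ with $|\Delta_j|$. For a singleton $T=\{j\}$, the definition $\Delta(S)=\bigcap_{j\in S}\Delta_j$ gives $\Delta(\{j\})=\Delta_j$, whence $\delta_1=|\Delta(\{j\})|=|\Delta_j|$ by Equation~(\ref{eqn deltait}); and $|\Delta_j|=b_s'(k-1)^{s-1}$ by the count established earlier (and recorded again in Equation~(\ref{eqn ei Bj = Deltaj}) of Corollary~\ref{cor P(cs') is a partition of Delta P(cs',j) is a partition of Deltaj}). This independently reconfirms the value of $\delta_1$ and completes the chain $\lambda_1\e=\delta_1=|\Delta_j|=b_s'(k-1)^{s-1}$. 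The only points worth flagging are the binomial convention and the elementary identification $\Delta(\{j\})=\Delta_j$; everything else is inherited verbatim from the already-proved Theorem~\ref{theorem t-design}.
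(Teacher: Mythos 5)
Your proposal is correct and matches the paper exactly: the paper states this corollary as a direct consequence of Theorem~\ref{theorem t-design} with $t=1$, which is precisely your specialization, with the binomial convention $\binom{r}{0}=1$ collapsing the formula and the identification $\Delta(\{j\})=\Delta_j$ (together with $|\Delta_j|=b_s'(k-1)^{s-1}$ from Section~\ref{sec Local actions and designs}) giving the chain of equalities. Your write-up merely makes explicit the routine details the paper leaves implicit.
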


Table~\ref{table 1-design for valency 3} and Table~\ref{table 1-design for valency 4} give all $1$-designs $(X,\mathcal{B}(c_s'))$, where $|X|=3$ or $4$, according to different structures of the graph $\Gamma$.

\begin{table}[!ht]
\centering
    \begin{tabular}{|c|c|c|c|}
        \hline
        $(X,\mathcal{B}(c_s'))$ & $|\mathcal{B}(c_s')|$ & $\mathcal{B}(c_s')$ & remark \\
        \hline
        $1$-$(3,1,1)$ design & $3$ & $\{\{1\},\{2\},\{3\}\}$ & $\backslash$ \\
        \hline
        $1$-$(3,2,2)$ design & $3$ & $\{\{1,2\},\{1,3\},\{2,3\}\}$ & $2$-$(3,2,1)$ design \\
        \hline
        $1$-$(3,3,1)$ design & $1$ & $\{\{1,2,3\}\}$ & $3$-$(3,3,1)$ design \\
        \hline
    \end{tabular}
    \caption{$1$-$(3,c_s',\lambda_1)$ design $(X,\mathcal{B}(c_s'))$ where $c_s'\in X=\{1,2,3\}$}\label{table 1-design for valency 3}
\end{table}

\begin{table}[!ht]
\centering
    \begin{tabular}{|c|c|c|c|}
        \hline
        $(X,\mathcal{B}(c_s'))$ & $|\mathcal{B}(c_s')|$ & $\mathcal{B}(c_s')$ & remark\\
        \hline
        $1$-$(4,1,1)$ design & $4$ & $\{\{1\},\{2\},\{3\},\{4\}\}$ & $\backslash$ \\
        \hline
        $1$-$(4,2,1)$ design & $2$ & \begin{tabular}{l}$\{\{1,2\},\{3,4\}\}$, \\ $\{\{1,3\},\{2,4\}\}$\\ or \\ $\{\{1,4\},\{2,3\}\}$ \end{tabular} & $\backslash$ \\
        \hline
        $1$-$(4,2,2)$ design & $4$ & \begin{tabular}{l}$\{\{1,2\},\{2,3\},\{3,4\},\{4,1\}\}$,\\ $\{\{1,2\},\{2,4\},\{4,3\},\{3,1\}\}$\\  or\\ $\{\{1,3\},\{3,2\},\{2,4\},\{4,1\}\}$\end{tabular} & $\backslash$ \\
        \hline
        $1$-$(4,2,3)$ design & $6$ & \begin{tabular}{l}$\{\{1,2\},\{1,3\},\{1,4\}$, \\ ~~~$\{2,3\},\{2,4\},\{3,4\}\}$\end{tabular} & \begin{tabular}{l}$2$-$(4,2,1)$\\ design\end{tabular} \\
        \hline
        $1$-$(4,3,3)$ design & $4$ & \begin{tabular}{l}$\{\{1,2,3\},\{1,2,4\}$,\\ ~~~$\{1,3,4\},\{2,3,4\}\}$\end{tabular} & \begin{tabular}{l}$3$-$(4,3,1)$\\ design\end{tabular} \\
        \hline
        $1$-$(4,4,1)$ design & $1$ & $\{\{1,2,3,4\}\}$ & \begin{tabular}{l}$4$-$(4,4,1)$\\ design\end{tabular} \\
        \hline
    \end{tabular}
    \caption{$1$-$(4,c_s',\lambda_1)$ design $(X,\mathcal{B}(c_s'))$ where $c_s'\in X=\{1,2,3,4\}$}\label{table 1-design for valency 4}
\end{table}

%\begin{table}[!ht]
%\centering
%    \begin{tabular}{|c|c|c|c|c|c|}
%        \hline
%        $(X,\mathcal{B})$ & remark & & $b_i'=1$, $\e_i(\Gamma)$ & & $b_i'=2$, $\e_i(\Gamma)$\\
%        \hline
%        $1$-$(4,1,1)$ design & $\backslash$ & & $3^{d-2}$ & & $2\times 3^{d-2}$\\
%        \hline
%        $1$-$(4,2,1)$ design & $\backslash$ & & $3^{d-2}$ & & $2\times 3^{d-2}$\\
%        \hline
%        $1$-$(4,2,2)$ design & $\backslash$ & & \textcolor[rgb]{1,0,0}{$3^{d-2}/2$} & & $3^{d-2}$\\
%        \hline
%        $1$-$(4,2,3)$ design & $2$-$(4,2,1)$ design & & $3^{d-3}$ & & $2\times 3^{d-3}$\\
%        \hline
%        $1$-$(4,3,3)$ design & $3$-$(4,3,1)$ design & & $3^{d-3}$ & & $2\times 3^{d-3}$\\
%        \hline
%        $1$-$(4,4,1)$ design & $4$-$(4,4,1)$ design & & $3^{d-2}$ & & $2\times 3^{d-2}$\\
%        \hline
%    \end{tabular}
%    \caption{$1$-$(4,c_i',\lambda_i)$ designs and $\e_i(\Gamma)$}\label{table 1-design for valency 4 ei}
%\end{table}

%\clearpage
%\newpage
%************************************************************************
%************************************************************************

\section{Adjacency relations}\label{sec Adjacency relations}

In this section, we use notations and assumptions as in Section~\ref{sec Local actions and designs}. Here, we consider several special cases of $P_{s+1}(\alpha,\Delta)$ and give the corresponding adjacency relations.

\subsection{$P_{s+1}(\alpha,\Delta)=[b_s']$}\label{subsec Ps+1alphaDeltaisbs'}

First, when $b_s'=1$, we get the structural information between $\Gamma_s(\alpha)$ and $\Delta$ in the following lemma.

\begin{lemma}\label{lem structural information between Gammasalpha and Delta}
We suppose $b_s'=1$. Let $j\in X$ and $S\in \mathcal{B}(c_s',j)$. Then
\begin{enumerate}
\item the edges between $B_j$ and $\Delta_j$ is a matching,
\item the edges between $B_j(S)$ and $\Delta(S)$ is a matching,
\item $B_j^{\mathcal{P}(c_s')}$ forms a partition of $B_j$, and
\item $\mathcal{P}(c_s',j)$ is a partition of $\Delta_j$.
\end{enumerate}
\end{lemma}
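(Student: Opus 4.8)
The plan is to derive all four parts from the single consequence of $b_s'=1$, namely that every vertex of $B_j$ has exactly one neighbour in $\Delta_j$. First I would record the two degree bounds that govern the bipartite graph between $B_j$ and $\Delta_j$. On the one hand, for $\gamma\in B_j$ the defining union $\Delta_j=\bigcup_{\gamma'\in B_j}\Delta\cap\Gamma_1(\gamma')$ gives $\Gamma_1(\gamma)\cap\Delta=\Gamma_1(\gamma)\cap\Delta_j$, and this set has size $b_s'=1$; thus every vertex of $B_j$ has exactly one neighbour in $\Delta_j$. On the other hand, Equation~(\ref{eqn Gamma1delta cap Bj <=1}) gives $|\Gamma_1(\delta)\cap B_j|\leqslant 1$ for every $\delta$, so every vertex of $\Delta_j$ has at most one neighbour in $B_j$. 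Since both sides of the bipartite graph have degree at most one, its edge set is a matching, which is part~(1). The attribution of the bounds is the point to state carefully: the exact-degree-one bound comes from $b_s'=1$, while the at-most-one bound is the girth condition through Equation~(\ref{eqn Gamma1delta cap Bj <=1}).

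For part~(2), I would simply restrict this matching. Since $j\in S$ forces $\Delta(S)\subseteq\Delta_j$, and $B_j(S)\subseteq B_j$ by construction, the edges between $B_j(S)$ and $\Delta(S)$ form a sub-bipartite-graph of the matching of part~(1), and any subgraph of a matching is again a matching. (In fact each $\gamma\in B_j(S)$ is adjacent to some vertex of $\Delta(S)$, so its unique $\Delta_j$-neighbour lies in $\Delta(S)$, while by Equation~(\ref{eqn Gamma1x cap Bj}) each $\delta\in\Delta(S)$ has exactly one neighbour in $B_j$; hence this matching is perfect on both sides, yielding $|B_j(S)|=|\Delta(S)|=\e$.)

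For part~(3), the covering $B_j=\bigcup_{S\in\mathcal{B}(c_s',j)}B_j(S)$ is already supplied by Equation~(\ref{eqn Bj union of BjS over S in Buj}), so the only content is disjointness. Suppose $\gamma\in B_j(S)\cap B_j(S')$ with $S,S'\in\mathcal{B}(c_s',j)$; then $\gamma$ is adjacent to some vertex of $\Delta(S)$ and to some vertex of $\Delta(S')$, but $\gamma$ has a unique neighbour in $\Delta_j\supseteq\Delta(S)\cup\Delta(S')$ by the matching of part~(1), so that neighbour lies in $\Delta(S)\cap\Delta(S')$. By Lemma~\ref{lem DeltaiS DeltaiT} this intersection is empty unless $S=S'$, forcing $S=S'$; hence the sets $B_j(S)$ are pairwise disjoint and $B_j^{\mathcal{P}(c_s')}$ partitions $B_j$. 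Finally, part~(4) is exactly the second assertion of Corollary~\ref{cor P(cs') is a partition of Delta P(cs',j) is a partition of Deltaj} and requires no new argument (and indeed holds without the hypothesis $b_s'=1$).

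I expect the only genuinely delicate step to be part~(1): once the matching is established, parts~(2)--(4) are routine restrictions and a disjointness argument built on Lemma~\ref{lem DeltaiS DeltaiT}. The subtlety in part~(1) is to make explicit that the unique $\Delta$-neighbour of $\gamma\in B_j$ indeed lies in $\Delta_j$ rather than in some other $\Delta_h$, which is immediate from the defining union for $\Delta_j$ but must be said, and then to combine the resulting exact-degree-one bound with the girth bound of Equation~(\ref{eqn Gamma1delta cap Bj <=1}) to conclude that both sides have degree at most one.
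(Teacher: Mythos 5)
Your proposal is correct and follows essentially the same route as the paper's proof: the exact-degree-one bound on the $B_j$ side from $b_s'=1$, the at-most-one bound on the $\Delta_j$ side from Equation~(\ref{eqn Gamma1delta cap Bj <=1}) (the paper uses Equation~(\ref{eqn Gamma1x cap Bj}) to get exactly one, which is equivalent here), restriction of the matching for part~(2), disjointness of the $B_j(S)$ via uniqueness of $\Delta_j$-neighbours together with Lemma~\ref{lem DeltaiS DeltaiT} for part~(3), and Corollary~\ref{cor P(cs') is a partition of Delta P(cs',j) is a partition of Deltaj} for part~(4). The only cosmetic difference is that the paper proves part~(2) directly by verifying both degree conditions on $B_j(S)$ and $\Delta(S)$, whereas you obtain it by restricting the matching of part~(1); both rest on the same facts.
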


\begin{proof}
We suppose $b_s'=1$. Let $j\in X$ and $S\in \mathcal{B}(c_s',j)$. Then $|\Delta_j|=b_s'|B_j|=|B_j|$. For each $\delta\in \Delta_j$ we have $|\Gamma_1(\delta)\cap B_j|=1$ by Equation~(\ref{eqn Gamma1x cap Bj}). For each $\gamma\in B_j$ we have $|\Gamma_1(\gamma)\cap \Delta_j|=b_s'=1$. So the edges between $B_j$ and $\Delta_j$ is a matching.

Let $\delta_1,\delta_2\in \Delta_j$ with $\delta_1\neq \delta_2$. We will show that
\begin{equation}\label{eqn disjoint union of BjS}
\Bigl(B_j\cap \Gamma_1(\delta_1)\Bigr)\cap \Bigl(B_j\cap \Gamma_1(\delta_2)\Bigr)=\emptyset.
\end{equation}
Let $A=\Bigl(B_j\cap \Gamma_1(\delta_1)\Bigr)\cap \Bigl(B_j\cap \Gamma_1(\delta_2)\Bigr)$. Suppose $A\neq \emptyset$ and take $\gamma\in A\subseteq B_j$. Then $\{\delta_1,\delta_2\}\subseteq \Gamma_1(\gamma)\cap \Delta_j$. We have $2\leqslant |\Gamma_1(\gamma)\cap \Delta_j|=b_s'=1$. This is a contradiction. So we get $A=\emptyset$. Hence $B_j(S)=\bigcup\limits_{\delta\in \Delta(S)}B_j\cap \Gamma_1(\delta)$ is the disjoint union of $B_j\cap \Gamma_1(\delta)$ over all $\delta\in \Delta(S)$. For each $\delta\in \Delta(S)$, we have $\emptyset\neq B_j\cap \Gamma_1(\delta)=B_j(S)\cap \Gamma_1(\delta)$ and $|B_j(S)\cap \Gamma_1(\delta)|=1$ by Equation~(\ref{eqn Gamma1delta cap Bj <=1}). For each $\gamma\in B_j(S)\subseteq B_j$, we have $0\neq |\Delta(S)\cap \Gamma_1(\gamma)|\leqslant |\Delta_j\cap \Gamma_1(\gamma)|=b_s'=1$, i.e. $|\Delta(S)\cap \Gamma_1(\gamma)|=1$. Thus the edges between $B_j(S)$ and $\Delta(S)$ is a matching.

Take $S,T\in \mathcal{B}(c_s',j)$ with $T\neq S$. Then $\Delta(S)\cup \Delta(T)\subseteq \Delta_j$. So by Equation~(\ref{eqn disjoint union of BjS}), we have
\begin{equation}\label{eqn BjS cap BjT empty}
B_j(S)\cap B_j(T)=\emptyset.
\end{equation}
By Equations~(\ref{eqn Bj union of BjS over S in Buj}) and (\ref{eqn BjS cap BjT empty}), we have that $B_j^{\mathcal{P}(c_s')}$ forms a partition of $B_j$.
\end{proof}

The adjacency relations between $B_j(S)$'s and $\Delta(S)$'s when $b_s'=1$ for valency $k=3$ and $k=4$ are shown in Figure~\ref{fig: Adjacency relations k3bi'1} and Figure~\ref{fig: Adjacency relations k4bi'1}. In these figures, we use $S\in \mathcal{B}(c_s')$ to denote the set $\Delta(S)$ in $\Delta$ and the corresponding set $B_j(S)$ in $B_j$, and we use an edge to denote the matching between $\Delta(S)$ and $B_j(S)$.

\begin{figure}[htb]
\centering
\subfigure[$c_s'=1$]{
\label{fig: k3bi'1ci'1}
\includegraphics[width=0.40\textwidth]{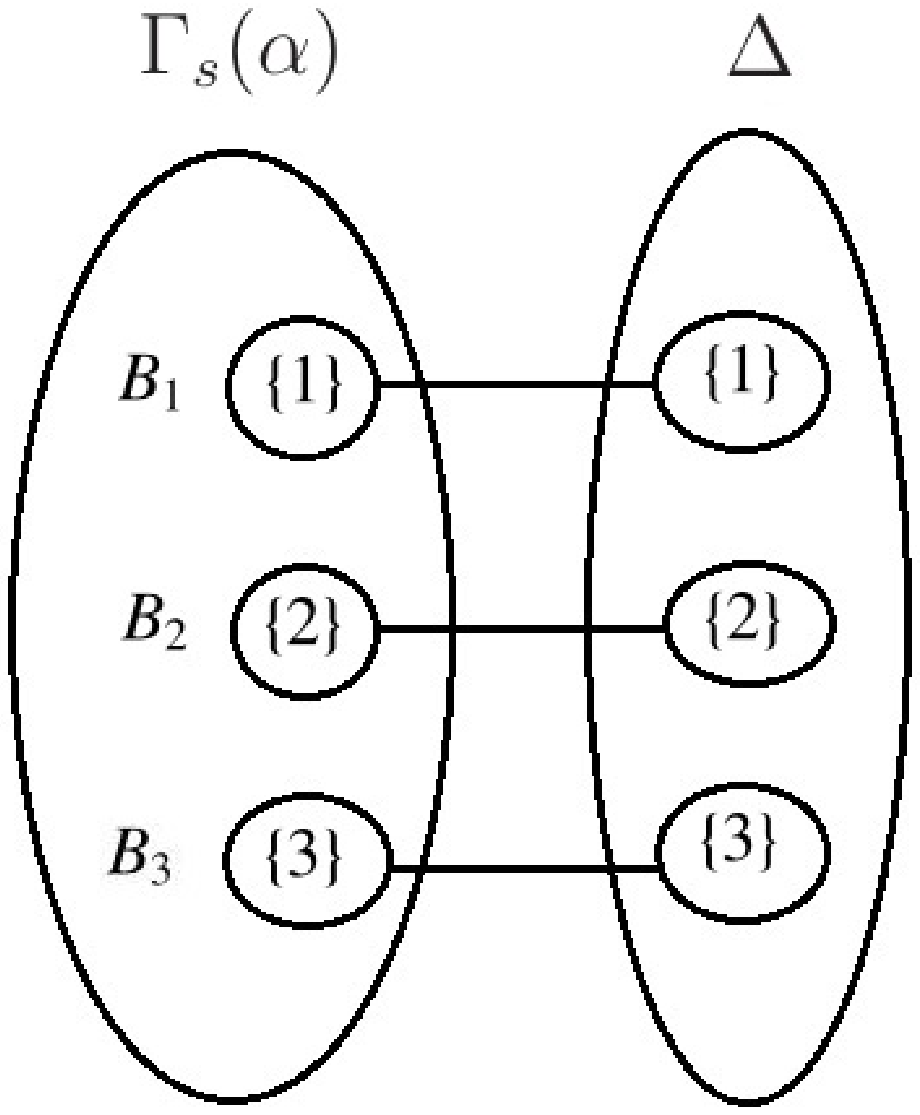}}
\subfigure[$c_s'=2$]{
\label{fig: k3bi'1ci'2}
\includegraphics[width=0.40\textwidth]{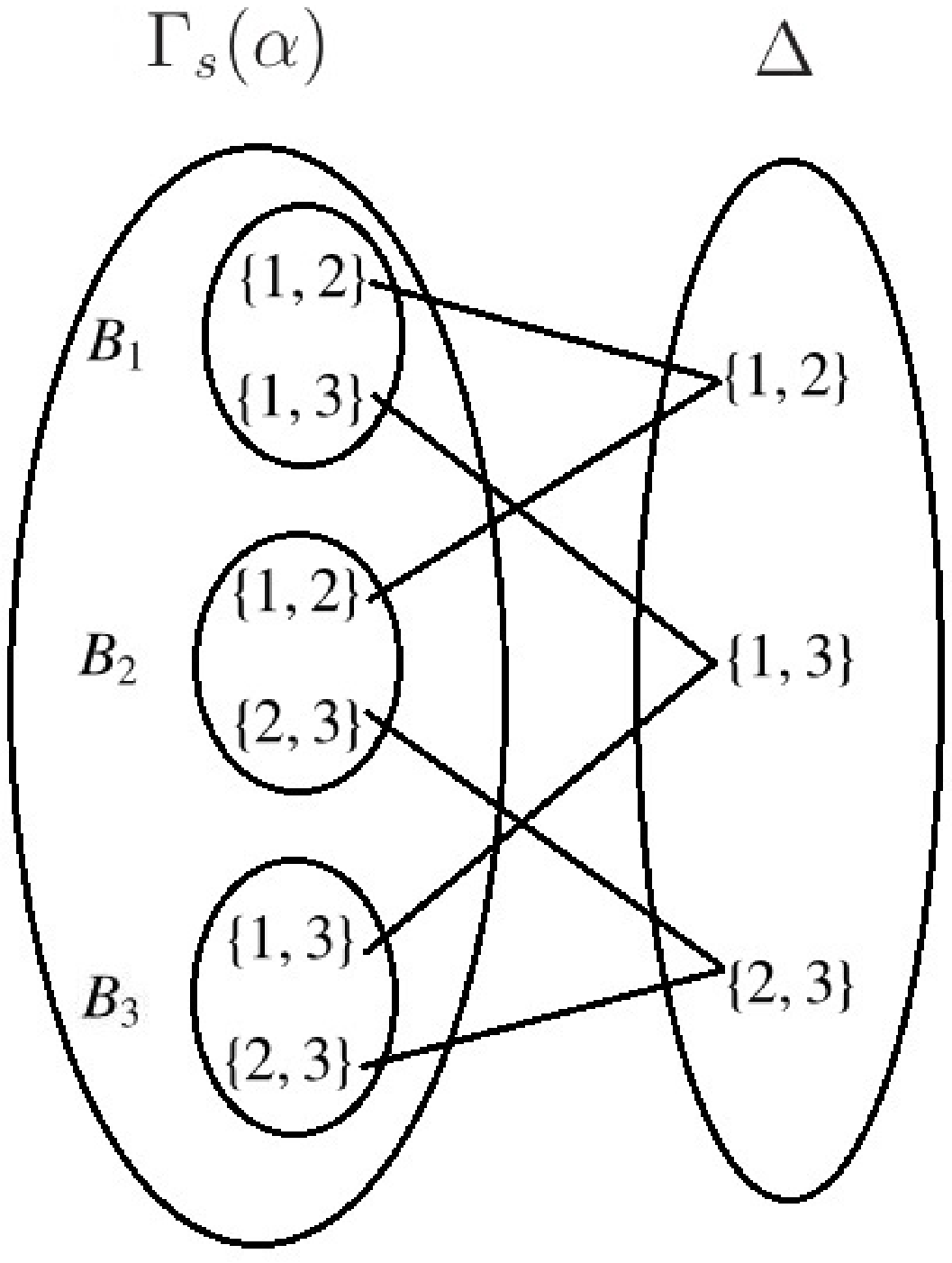}}
\subfigure[$c_s'=3$]{
\label{fig: k3bi'1ci'3}
\includegraphics[width=0.40\textwidth]{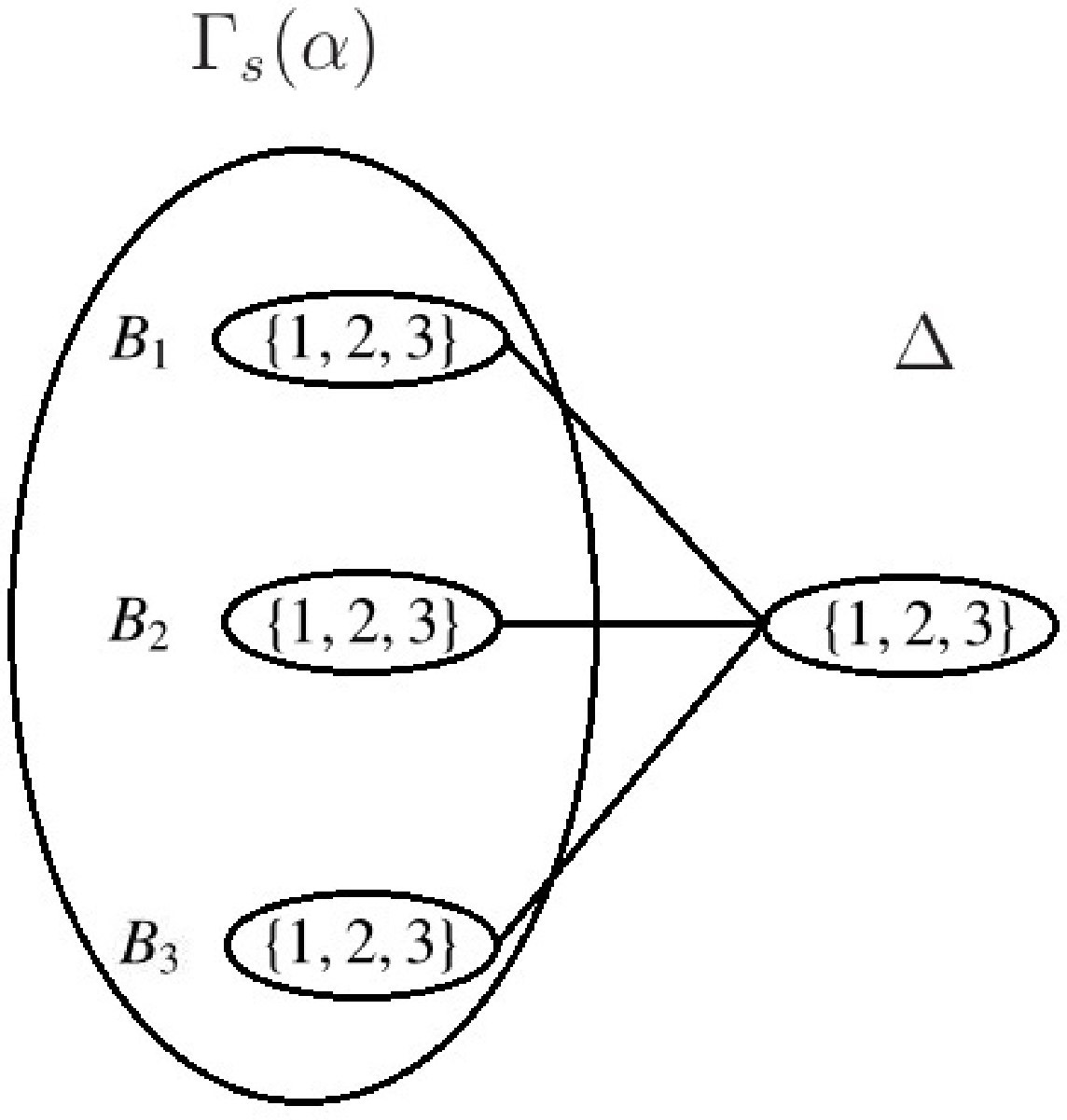}}
\caption{Adjacency relations when $b_s'=1$ and $k=3$}
\label{fig: Adjacency relations k3bi'1}
\end{figure}

\begin{figure}[htb]
\centering
\subfigure[$c_s'=1$]{
\label{fig: k4bi'1ci'1}
\includegraphics[width=0.31\textwidth]{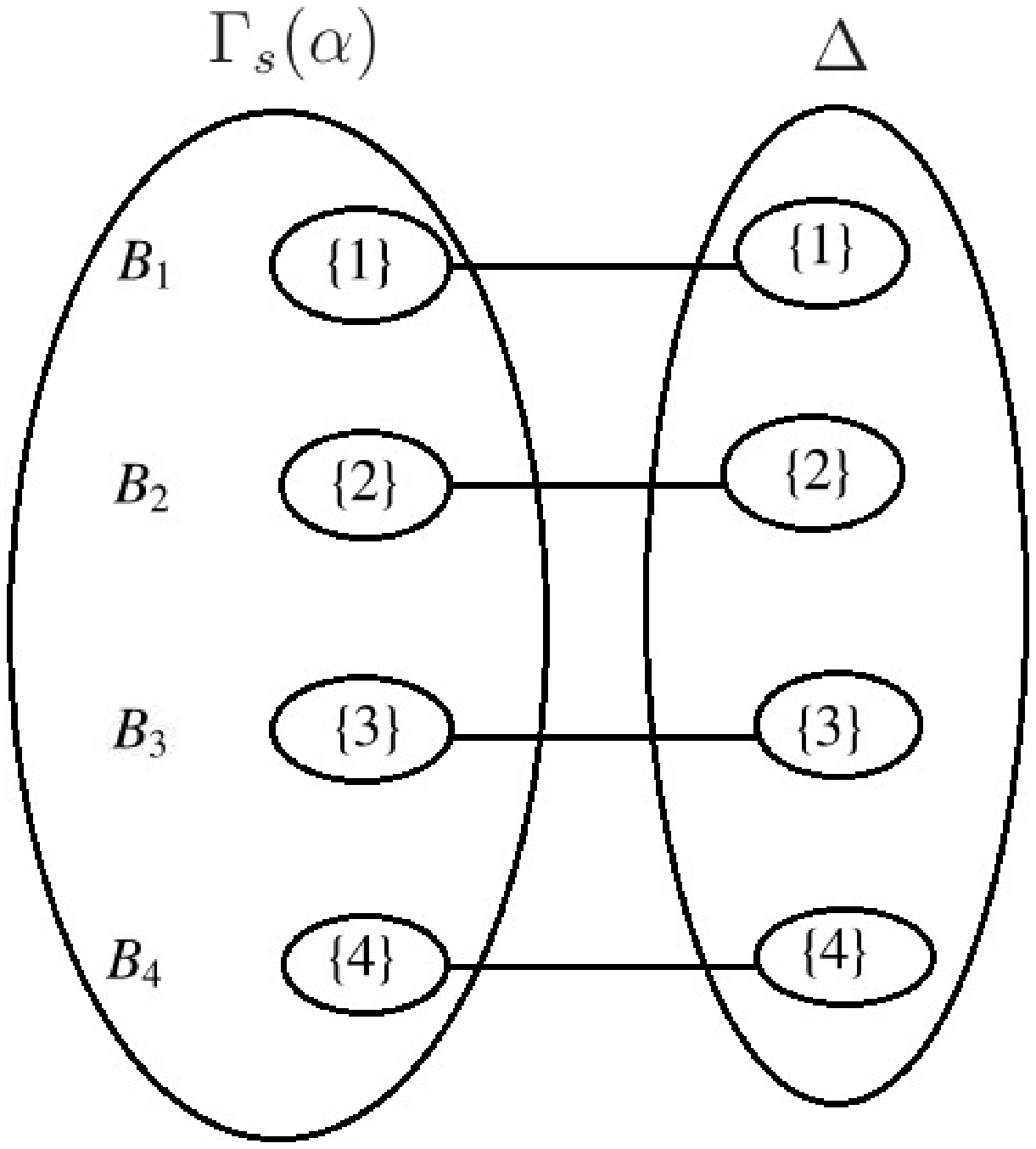}}
\subfigure[$c_s'=2$ and $\lambda_1=1$]{
\label{fig: k4bi'1ci'21}
\includegraphics[width=0.31\textwidth]{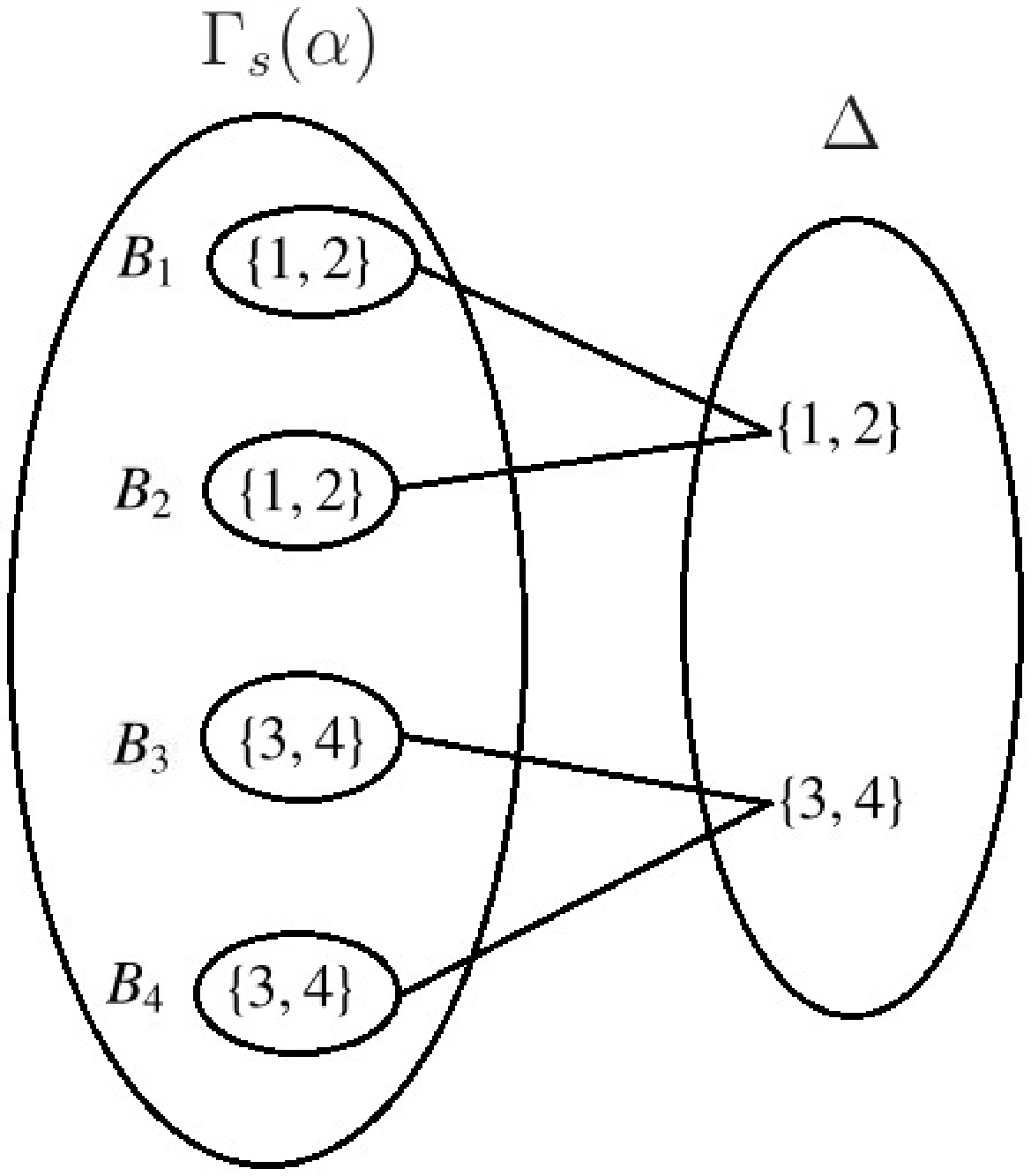}}
\subfigure[$c_s'=2$ and $\lambda_1=3$]{
\label{fig: k4bi'1ci'23}
\includegraphics[width=0.31\textwidth]{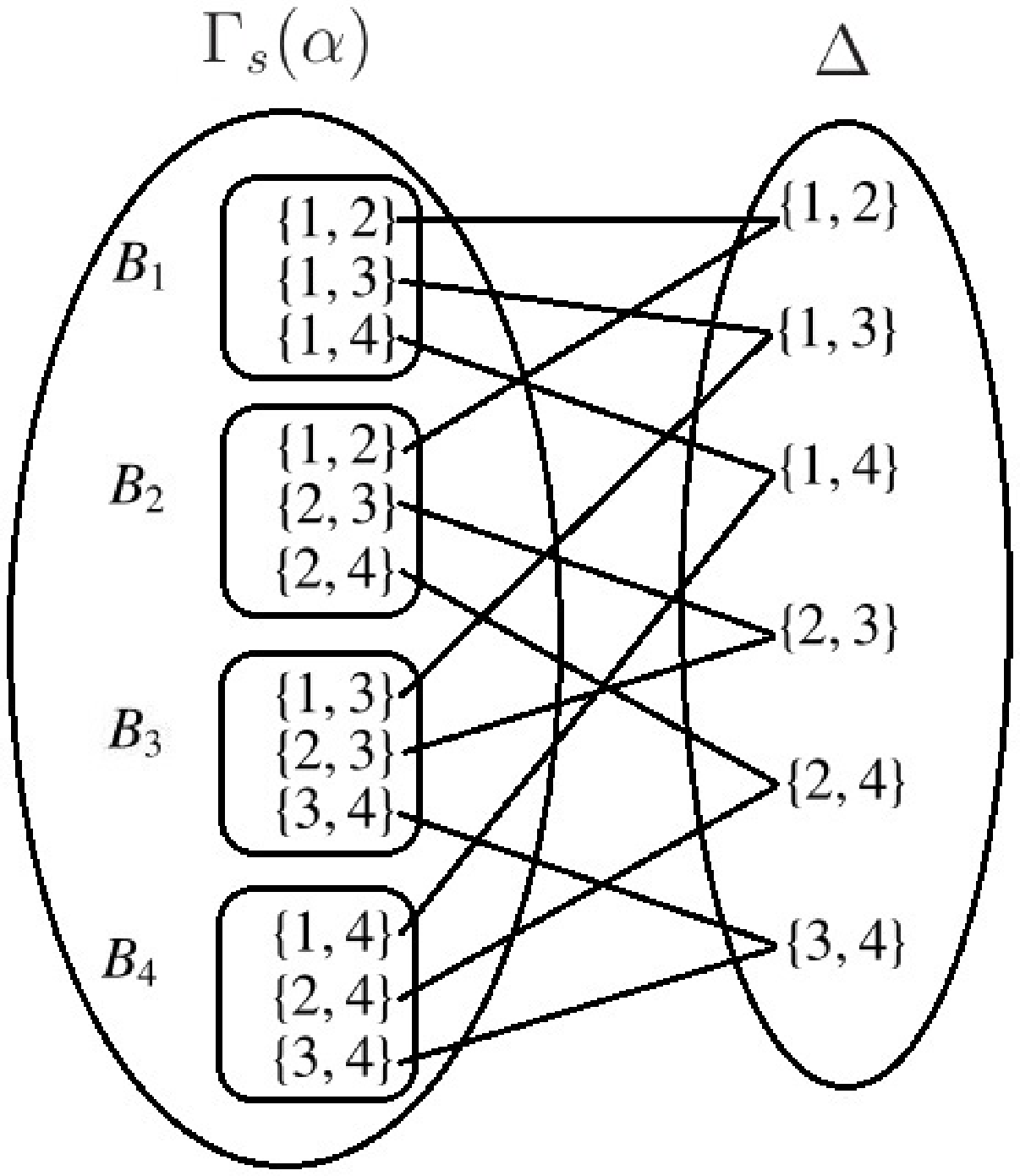}}
\subfigure[$c_s'=3$]{
\label{fig: k4bi'1ci'3}
\includegraphics[width=0.31\textwidth]{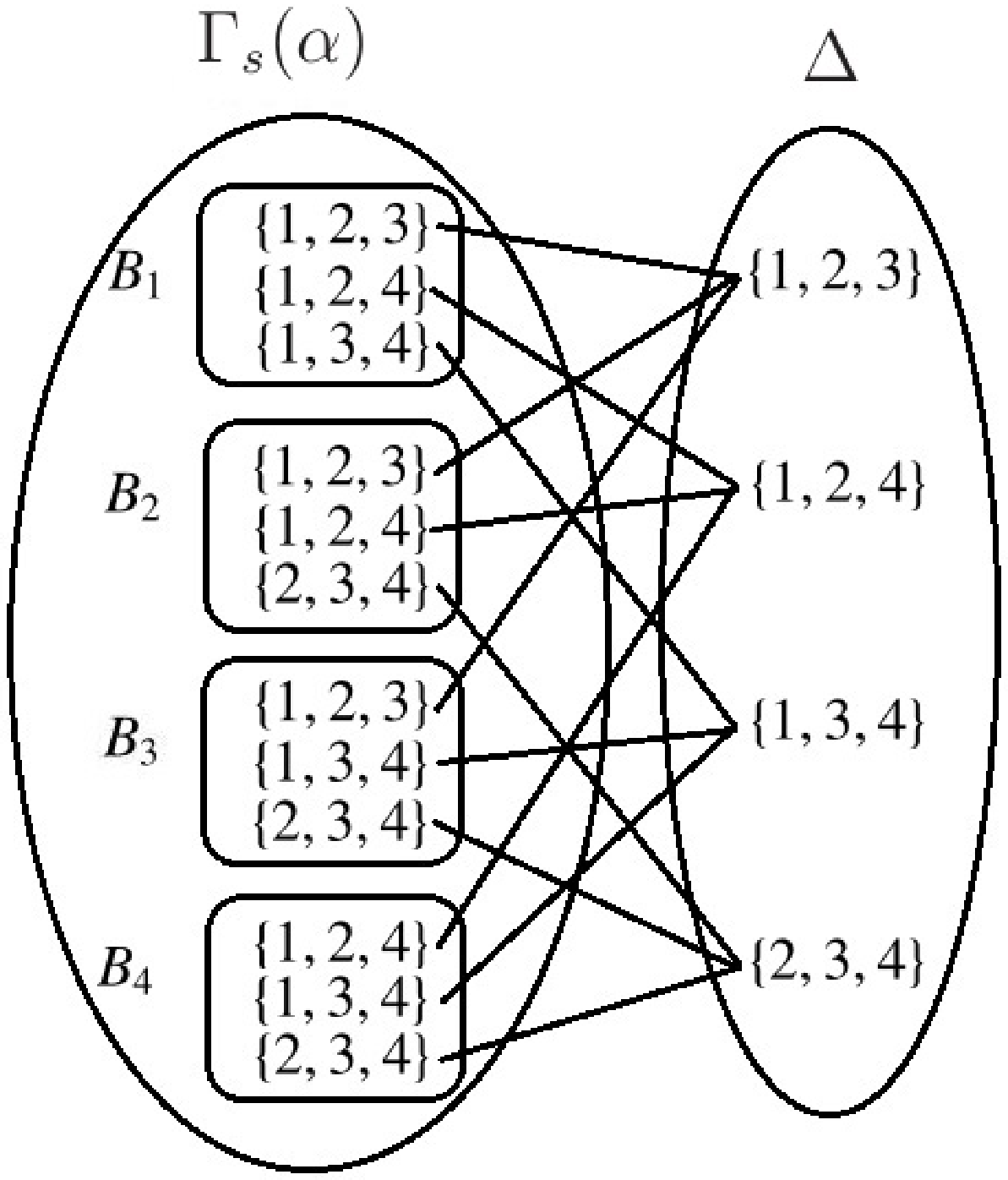}}
\subfigure[$c_s'=4$]{
\label{fig: k4bi'1ci'4}
\includegraphics[width=0.31\textwidth]{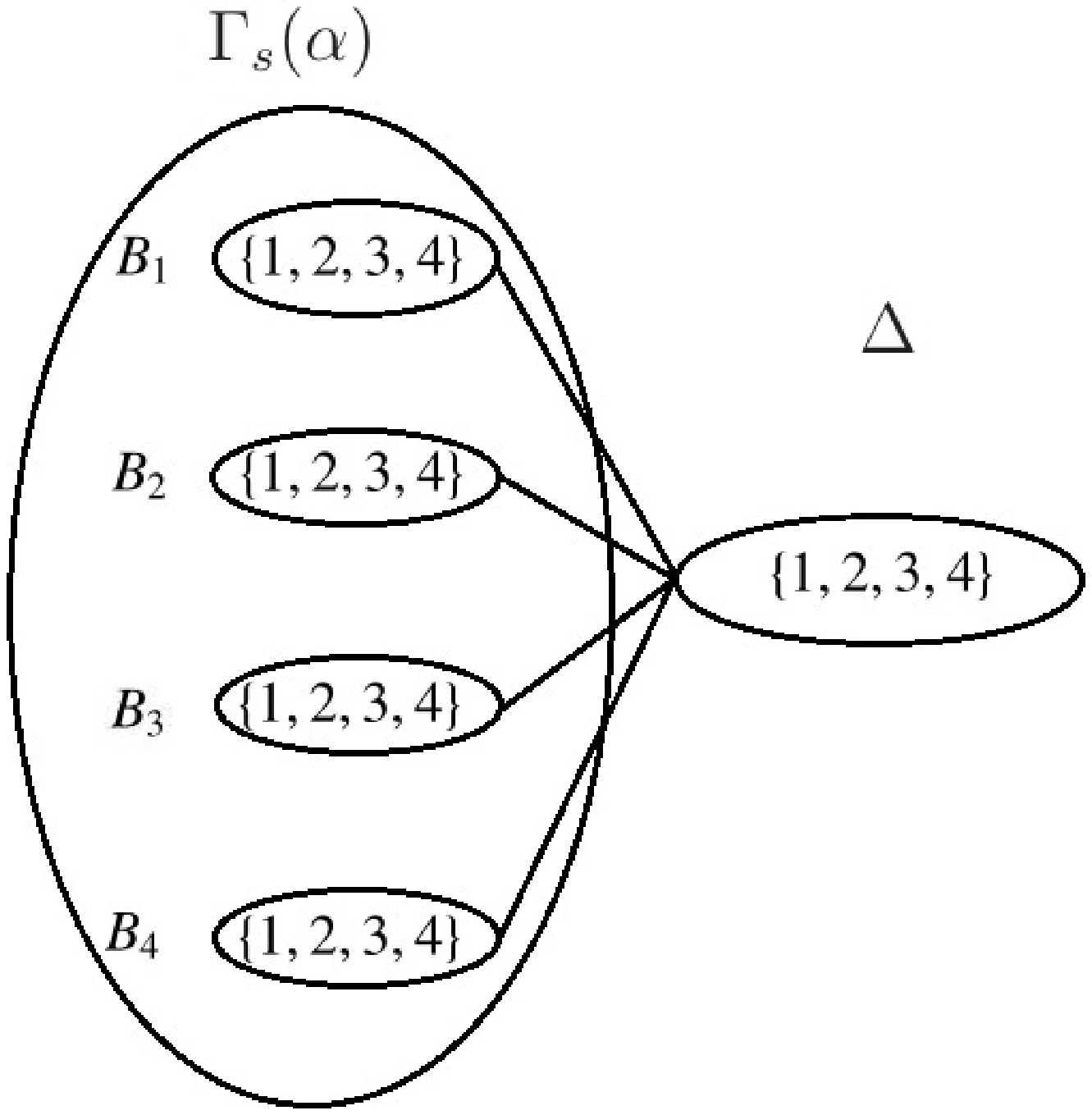}}
\caption{Adjacency relations when $b_s'=1$ and $k=4$}
\label{fig: Adjacency relations k4bi'1}
\end{figure}

%\newpage
%\vspace{6em}

Similarly, for the general case of $P_{s+1}(\alpha,\Delta)=[b_s']$ where $b_s'\geqslant 1$, we have the following lemma.

\begin{lemma}\label{lem structural information P_s+1=[bs']}
Suppose $P_{s+1}(\alpha,\Delta)=[b_s']$. Let $j\in X$ and let $S,T\in \mathcal{B}(c_s',j)$ with $S\neq T$. Then $B_j(S)\cap B_j(T)=\emptyset$. Hence
\begin{enumerate}
\item the edges between $B_j$ and $\Delta_j$ is a disjoint union of $K_{1,b_s'}$'s,
\item the edges between $B_j(S)$ and $\Delta(S)$ is a disjoint union of $K_{1,b_s'}$'s,
\item $B_j^{\mathcal{P}(c_s')}$ forms a partition of $B_j$, and
\item $\mathcal{P}(c_s',j)$ is a partition of $\Delta_j$.
\end{enumerate}
\end{lemma}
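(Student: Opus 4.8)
The plan is to reduce the whole lemma to a single reading of the hypothesis $P_{s+1}(\alpha,\Delta)=[b_s']$ and then recover the four conclusions by the same bipartite degree-count used for Lemma~\ref{lem structural information between Gammasalpha and Delta}. First I would unpack the notation. Fix $\gamma\in B_j$. By definition $P_{s+1}(\alpha,\Delta)$ is the multiset $[\,|\Gamma_1(\gamma)\cap\Delta(S)| : S\in B_j^\gamma\,]$, and by Equation~(\ref{eqn bs'= sum of Gamma1gamma cap DeltaS}) its entries sum to $b_s'$; it is well defined (independent of $\gamma$ and $j$) by Lemma~\ref{lem Delta j gamma 1 to 1 Delta h varepsilon}. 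Saying this multiset equals $[b_s']$ is exactly saying it consists of a single part, hence $|B_j^\gamma|=1$ for every $\gamma\in\Gamma_s(\alpha)$. Equivalently: for each $\gamma\in B_j$ there is a unique $S\in\mathcal{B}(c_s',j)$ with $\Gamma_1(\gamma)\cap\Delta(S)\neq\emptyset$, and all $b_s'$ neighbours of $\gamma$ in $\Delta$ lie in that one $\Delta(S)$.

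Granting this, the disjointness $B_j(S)\cap B_j(T)=\emptyset$ is immediate. Recall from the definition of $B_j(S)$ that $\gamma\in B_j(S)$ precisely when $\gamma\in B_j$ and $\Gamma_1(\gamma)\cap\Delta(S)\neq\emptyset$. If some $\gamma$ lay in both $B_j(S)$ and $B_j(T)$, then $\gamma$ would have a neighbour in $\Delta(S)$ and a neighbour in $\Delta(T)$, so both $S$ and $T$ would belong to $B_j^\gamma$; since $|B_j^\gamma|=1$ this forces $S=T$, contradicting $S\neq T$. This is the analogue of Equation~(\ref{eqn BjS cap BjT empty}) from the $b_s'=1$ case, the only difference being that the input is now the hypothesis on $P_{s+1}$ rather than the sharper bound $|\Gamma_1(\gamma)\cap\Delta_j|=1$ available there.

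For the four itemised conclusions I would count bipartite degrees exactly as in Lemma~\ref{lem structural information between Gammasalpha and Delta}. For (1): each $\gamma\in B_j$ has all $b_s'$ of its $\Delta$-neighbours inside $\Delta_j$, so its degree into $\Delta_j$ is $b_s'$, while each $\delta\in\Delta_j$ has $|\Gamma_1(\delta)\cap B_j|=1$ by Equation~(\ref{eqn Gamma1delta cap Bj <=1}) together with $\Delta_j=\bigcup_{\gamma\in B_j}\Delta\cap\Gamma_1(\gamma)$; a bipartite graph with these two degree profiles is a disjoint union of stars $K_{1,b_s'}$ centred on the $B_j$ side. For (2) I would restrict this picture to the pair $(B_j(S),\Delta(S))$: using $|B_j^\gamma|=1$, every $\gamma\in B_j(S)$ has all $b_s'$ of its $\Delta$-neighbours in $\Delta(S)$, and every $\delta\in\Delta(S)$ keeps its unique $B_j$-neighbour inside $B_j(S)$ (directly from $B_j(S)=\bigcup_{\delta\in\Delta(S)}B_j\cap\Gamma_1(\delta)$), so the identical degree argument yields another disjoint union of $K_{1,b_s'}$'s. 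Conclusion (3) then follows by combining the just-proved disjointness with the covering Equation~(\ref{eqn Bj union of BjS over S in Buj}) and the fact that each $B_j(S)$ is nonempty (any $\delta\in\Delta(S)$ contributes a neighbour to it); conclusion (4) is already Corollary~\ref{cor P(cs') is a partition of Delta P(cs',j) is a partition of Deltaj}.

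I expect the only real obstacle to be interpretive rather than computational: the entire argument hinges on correctly translating $P_{s+1}(\alpha,\Delta)=[b_s']$ into the combinatorial statement $|B_j^\gamma|=1$, after which the disjointness and both star decompositions are forced. The residual care is bookkeeping, namely verifying that the unique $B_j$-neighbour of a vertex $\delta\in\Delta(S)$ lands in $B_j(S)$ and not merely in $B_j$, which is handled directly by the defining equations.
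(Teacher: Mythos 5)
Your proposal is correct and takes essentially the same route as the paper: the core step is exactly the paper's contradiction argument, namely that a common vertex $\gamma\in B_j(S)\cap B_j(T)$ would give the multiset $P_{s+1}(\alpha,\Delta)$ at least two parts, contradicting $P_{s+1}(\alpha,\Delta)=[b_s']$ (your translation of the hypothesis into $|B_j^\gamma|=1$ is an equivalent reading of the same fact). The paper dispatches the four itemized conclusions with a bare ``Hence''; your degree-counting derivations merely fill in that routine detail, in the same spirit as the $b_s'=1$ case.
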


\begin{proof}
Suppose $B_j(S)\cap B_j(T)\neq\emptyset$ and let $\gamma\in B_j(S)\cap B_j(T)\subseteq B_j$. Then $\Gamma_1(\gamma)\cap \Delta(S)\neq \emptyset$ and $\Gamma_1(\gamma)\cap \Delta(T)\neq \emptyset$. This means $|\Gamma_1(\gamma)\cap \Delta(S)|<b_s'$. This contradicts $m_{s+1}(\alpha,\Delta)=[b_s']$.
\end{proof}

%\vspace{3em}
%\newpage
%~
%\newpage
%~
%\newpage
%~
%
%\newpage

%\clearpage

\subsection{$P_{s+1}(\alpha,\Delta)=[b,b,\ldots,b]$ where $b_s'=mb$}\label{subsec Ps+1alphaDeltaismb}

Let $b_s'=mb$ where $m\geqslant 1$ and $b\geqslant 1$, and suppose
\begin{center}$P_{s+1}(\alpha,\Delta)=[p_1,p_2,\ldots,p_m]$ where $p_1=p_2=\cdots =p_m=b$.\end{center} Let $j\in X$ and let $\gamma\in B_j$. Then $|B_j^\gamma|=m$. Let $S\in B_j^\gamma\subseteq \mathcal{B}(c_s',j)$. Then $|\Gamma_1(\gamma)\cap \Delta(S)|=b$ and $\gamma\in B_j(S)$. Hence $\gamma\in \bigcap\limits_{S\in B_j^\gamma}B_j(S)\neq\emptyset$. Let $S_1,S_2,\ldots,S_m\in \mathcal{B}(c_s',j)$ be pairwise distinct. Define \begin{center}$B_j(S_1,S_2,\ldots,S_m)=\bigcap\limits_{r=1}^m B_j(S_r)$,\end{center}
\begin{center}$\Delta_j(S_1,S_2,\ldots,S_m)=\bigcup\limits_{\gamma\in B_j(S_1,S_2,\ldots,S_m)} \Gamma_1(\gamma)\cap \Delta_j\subseteq \Delta_j$\end{center} and
\begin{center}$\mathcal{B}_m(c_s',j)=\bigl\{\{S_1,S_2,\ldots,S_m\}\mid B_j(S_1,S_2,\ldots,S_m)\neq\emptyset\text{ where }\newline S_1,S_2,\ldots,S_m\in \mathcal{B}(c_s',j)\text{ are pairwise distinct}\bigr\}$.\end{center} So $B_j(S_1,S_2,\ldots,S_m)\subseteq B_j(S_r)\subseteq B_j$ where $1\leqslant r\leqslant m$, and
\begin{center}$|\mathcal{B}_m(c_s',j)|\leqslant \binom{|\mathcal{B}(c_s',j)|}{m}$.\end{center} Suppose $B_j^\gamma=\{S_1,S_2,\ldots,S_m\}$, then by definition $\gamma\in B_j(S_1,S_2,\ldots,S_m)\neq\emptyset$. So we have $B_j^\gamma\in \mathcal{B}_m(c_s',j)$. For any $\tilde{T}=\{T_1,T_2,\ldots,T_m\}\in \mathcal{B}_m(c_s',j)$, we have $B_j(\tilde{T})=B_j(T_1,T_2,\ldots,T_m)\neq\emptyset$. Take $\gamma\in B_j(\tilde{T})$. Then for $1\leqslant r\leqslant m$, we have $\gamma\in B_j(T_r)$ i.e. $\Gamma_1(\gamma)\cap \Delta(T_r)\neq\emptyset$, and so $|\Gamma_1(\gamma)\cap \Delta(T_r)|=b$. Thus $B_j^\gamma=\{T_1,T_2,\ldots,T_m\}$. Thus we have
\begin{center}$\mathcal{B}_m(c_s',j)=\{B_j^\gamma\mid \gamma\in B_j\}$.\end{center}
If $\{S_1,S_2,\ldots,S_m\}$ and $\{T_1,T_2,\ldots,T_m\}$ are distinct in $\mathcal{B}_m(c_s',j)$, then
\begin{center}$B_j(S_1,S_2,\ldots,S_m)\cap B_j(T_1,T_2\ldots,T_m)=\emptyset$.\end{center} If $S\in \mathcal{B}(c_s',j)$, then
\begin{equation}\label{eqn BjS} B_j(S)=\bigcup\limits_{\{S,T_1,\ldots,T_{m-1}\}\in \mathcal{B}_m(c_s',j)}B_j(S,T_1,\ldots,T_{m-1})\end{equation} and
\begin{center}$B_j=\bigcup\limits_{\{S_1,S_2,\ldots,S_m\}\in \mathcal{B}_m(c_s',j)}B_j(S_1,S_2,\ldots,S_m)$.\end{center} Let $S\in \mathcal{B}(c_s',j)$ and
\begin{center}$\mathcal{B}_S(c_s',j)=\bigl\{\{S,T_1,T_2,\ldots,T_{m-1}\}\in \mathcal{B}_m(c_s',j)\bigr\}$.\end{center} Thus
\begin{center}$B_j(S)^{m,\mathcal{P}(c_s')}=\bigl\{B_j(S,T_1,T_2,\ldots,T_{m-1})\mid\{S,T_1,T_2,\ldots,T_{m-1}\}\in \mathcal{B}_S(c_s',j)\bigr\}$\end{center} forms a partition of $B_j(S)$ and
\begin{center}$B_j^{m,\mathcal{P}(c_s')}=\bigl\{B_j(S_1,S_2,\ldots,S_m)\mid\{S_1,S_2,\ldots,S_m\}\in \mathcal{B}_m(c_s',j)\bigr\}$\end{center} forms a partition of $B_j$. Let $\{S,T_1,T_2,\ldots,T_{m-1}\}\in \mathcal{B}_m(c_s',j)$, we define
\begin{center}$\Delta_S(S,T_1,T_2,\ldots,T_{m-1})=\bigcup\limits_{\gamma\in B_j(S,T_1,T_2,\ldots,T_{m-1})}\Gamma_1(\gamma)\cap \Delta(S)\subseteq \Delta(S)$.\end{center} Then $\Delta_S(S,T_1,T_2,\ldots,T_{m-1})\subseteq \Delta_j(S,T_1,T_2,\ldots,T_{m-1})\subseteq \Delta_j$,
\begin{center}$\Delta_j(S_1,S_2,\ldots,S_m)=\bigcup\limits_{r=1}^m\Delta_{S_r}(S_1,S_2,\ldots,S_m)$,\end{center}
\begin{center}$\Delta(S)=\bigcup\limits_{\{S,T_1,\ldots,T_{m-1}\}\in \mathcal{B}_m(c_s',j)}\Delta_S(S,T_1,\ldots,T_{m-1})$\end{center} and \begin{center}$\Delta_j=\bigcup\limits_{\{S_1,S_2,\ldots,S_m\}\in \mathcal{B}_m(c_s',j)}\Delta_j(S_1,S_2,\ldots,S_m)$.\end{center}
Thus
\begin{center}$\mathcal{P}_S(c_s',j)=\bigl\{\Delta_S(S,T_1,T_2,\ldots,T_{m-1})\mid\{S,T_1,T_2,\ldots,T_{m-1}\}\in \mathcal{B}_S(c_s',j)\bigr\}$\end{center} forms a partition of $\Delta(S)$ and
\begin{center}$\mathcal{P}_m(c_s',j)=\bigl\{\Delta_j(S_1,S_2,\ldots,S_m)\mid\{S_1,S_2,\ldots,S_m\}\in \mathcal{B}_m(c_s',j)\bigr\}$\end{center} forms a partition of $\Delta_j$.

\begin{lemma}\label{lem structural information P_s+1=[bb...b] where bs'=mb}
Let $b_s'=mb$ where $m\geqslant 1$ and $b\geqslant 1$, and suppose $P_{s+1}(\alpha,\Delta)=[p_1,p_2,\ldots,p_m]$ where $p_1=p_2=\cdots =p_m=b$. Let $j\in X$ and let $S\in \mathcal{B}(c_s',j)$.
\begin{enumerate}
\item The edges between $B_j$ and $\Delta_j$ is a disjoint union of $K_{1,b_s'}$'s.
\item Let $\{S_1,S_2,\ldots,S_m\}\in \mathcal{B}_m(c_s',j)$. Then the edges between \newline $B_j(S_1,S_2,\ldots,S_m)$ and $\Delta_j(S_1,S_2,\ldots,S_m)$ is a disjoint union of $K_{1,b_s'}$'s.
\item Let $\{S,T_1,T_2,\ldots,T_{m-1}\}\in \mathcal{B}_m(c_s',j)$. Then the edges between \newline $B_j(S,T_1,T_2,\ldots,T_{m-1})$ and $\Delta_S(S,T_1,T_2,\ldots,T_{m-1})$ is a disjoint union of $K_{1,b}$'s.
\item $B_j(S)^{m,\mathcal{P}(c_s')}$ is a partition of $B_j(S)$ and $B_j^{m,\mathcal{P}(c_s')}$ is a partition of $B_j$.
\item $\mathcal{P}_S(c_s',j)$ is a partition of $\Delta(S)$ and $\mathcal{P}_m(c_s',j)$ is a partition of $\Delta_j$.
\end{enumerate}
\end{lemma}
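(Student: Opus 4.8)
The plan is to build everything on a single structural fact that the large local girth provides: \emph{every vertex of $\Delta_j$ has a unique neighbour in $B_j$}. Indeed, Equation~(\ref{eqn Gamma1delta cap Bj <=1}) gives $|\Gamma_1(\delta)\cap B_j|\leqslant 1$ for $\delta\in\Delta_j$, while the membership $\delta\in\Delta_j=\Delta\cap\Gamma_s(\beta_j)$ forces a neighbour of $\delta$ on a shortest $\beta_j$--$\delta$ path to lie in $\Gamma_{s-1}(\beta_j)\cap\Gamma_s(\alpha)=B_j$, so in fact $|\Gamma_1(\delta)\cap B_j|=1$. Since at the other side $|\Gamma_1(\gamma)\cap\Delta_j|=b_s'$ for every $\gamma\in B_j$, the bipartite graph induced between $B_j$ and $\Delta_j$ is right-regular of degree $1$ and left-regular of degree $b_s'$; hence each $\gamma\in B_j$ together with its $b_s'$ neighbours spans a copy of $K_{1,b_s'}$, and these stars are vertex-disjoint because their leaves have degree $1$. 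This is precisely part~(1), and it is the template for parts~(2) and~(3).

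Next I would record the combinatorial backbone already assembled just before the statement, namely $\mathcal{B}_m(c_s',j)=\{B_j^\gamma\mid\gamma\in B_j\}$ together with the disjointness $B_j(S_1,\ldots,S_m)\cap B_j(T_1,\ldots,T_m)=\emptyset$ whenever $\{S_1,\ldots,S_m\}\neq\{T_1,\ldots,T_m\}$ in $\mathcal{B}_m(c_s',j)$. For part~(4) I map each $\gamma\in B_j$ to the set $B_j(B_j^\gamma)$: this is the unique member of $B_j^{m,\mathcal{P}(c_s')}$ containing $\gamma$ (it contains $\gamma$ since $\gamma\in B_j(S)$ for every $S\in B_j^\gamma$, and it is unique by the displayed disjointness), so $B_j^{m,\mathcal{P}(c_s')}$ is a partition of $B_j$. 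Intersecting with the decomposition of $B_j(S)$ in Equation~(\ref{eqn BjS}) then yields that $B_j(S)^{m,\mathcal{P}(c_s')}$ partitions $B_j(S)$.

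To obtain part~(5) I transport the partition of $B_j$ across the unique-neighbour correspondence of the first paragraph. Each $\delta\in\Delta_j$ is sent to its unique $B_j$-neighbour $\gamma$, and then to the block $B_j(B_j^\gamma)$; by definition of $\Delta_j(S_1,\ldots,S_m)$ this places $\delta$ in $\Delta_j(B_j^\gamma)$, so $\mathcal{P}_m(c_s',j)$ covers $\Delta_j$. Disjointness is immediate: if $\delta$ lay in two blocks $\Delta_j(S_1,\ldots,S_m)$ and $\Delta_j(T_1,\ldots,T_m)$, then its unique neighbour $\gamma$ would lie in both $B_j(S_1,\ldots,S_m)$ and $B_j(T_1,\ldots,T_m)$, contradicting disjointness. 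The same argument restricted to $\Delta(S)$, now using that $\gamma\in B_j(S)$ has exactly $|\Gamma_1(\gamma)\cap\Delta(S)|=b$ neighbours in $\Delta(S)$, shows $\mathcal{P}_S(c_s',j)$ is a partition of $\Delta(S)$.

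Finally, parts~(2) and~(3) follow by restricting the star decomposition of part~(1) to the blocks furnished by parts~(4) and~(5). Over a block $B_j(S_1,\ldots,S_m)$ every centre $\gamma$ keeps all $b_s'$ of its $\Delta_j$-neighbours, which lie in $\Delta_j(S_1,\ldots,S_m)$ by definition, and every vertex of $\Delta_j(S_1,\ldots,S_m)$ has its unique neighbour inside the block, so the induced graph is again a disjoint union of $K_{1,b_s'}$'s. Restricting instead to $\Delta_S(S,T_1,\ldots,T_{m-1})$, each centre $\gamma\in B_j(S,T_1,\ldots,T_{m-1})$ contributes exactly $b$ neighbours, yielding $K_{1,b}$'s. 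The only genuinely delicate ingredient is the uniqueness of the $B_j$-neighbour established in the first paragraph; everything else is bookkeeping with the disjointness already recorded and the identity $\mathcal{B}_m(c_s',j)=\{B_j^\gamma\mid\gamma\in B_j\}$. The hypothesis $P_{s+1}(\alpha,\Delta)=[b,b,\ldots,b]$ enters only in this last step, where the uniform value $b$ (rather than an arbitrary partition of $b_s'$) is exactly what upgrades the restricted stars to $K_{1,b}$'s.
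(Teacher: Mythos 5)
Your proposal is correct and takes essentially the same approach as the paper, which states this lemma without a separate proof as a summary of the construction developed immediately before it: you assemble exactly those ingredients --- the identity $\mathcal{B}_m(c_s',j)=\{B_j^\gamma\mid \gamma\in B_j\}$, the disjointness of distinct $B_j$-blocks, Equation~(\ref{eqn BjS}), and degree counting --- with the helpful extra step of making explicit that each $\delta\in\Delta_j$ has exactly one neighbour in $B_j$ (in the paper this is Equation~(\ref{eqn Gamma1x cap Bj})) and transporting the $B_j$-side partition across that correspondence to obtain the $\Delta_j$-side partitions, a detail the paper leaves implicit. The only inaccuracy is your closing remark that the hypothesis $P_{s+1}(\alpha,\Delta)=[b,b,\ldots,b]$ enters only in part~(3): it is also what forces $|B_j^\gamma|=m$ for every $\gamma\in B_j$, and hence underlies the backbone identity and disjointness that you cite.
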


\begin{figure}[htb]
\centering
\includegraphics[width=0.70\textwidth]{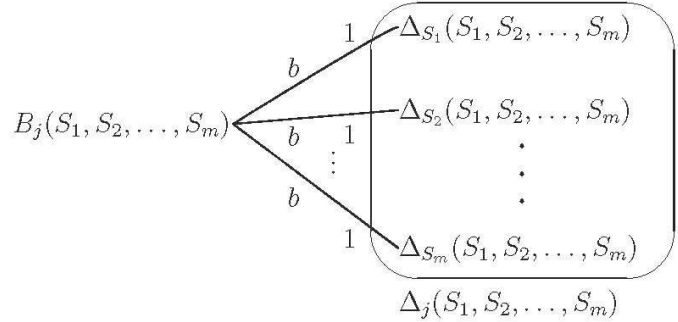}
\caption{The adjacency relations and intersection numbers between $B_j(S_1,S_2,\ldots,S_m)$ and $\Delta_j(S_1,S_2,\ldots,S_m)$.}
\label{fig: partitionofDeltajSonetom}
\end{figure}

%
%%partition of DeltajS1...Sm
%\newsavebox{\partitionofDeltajSonetom}
%\savebox{\partitionofDeltajSonetom}{
%\setlength{\unitlength}{1em}
%\begin{picture}(28,15)\label{pic partitionofDeltajSonetom}
%%\multiput(0,0)(1,0){13}{\line(0,1){9}}
%%\multiput(0,0)(0,1){10}{\line(1,0){12}}
%\put(1,6.5){$B_j(S_1,S_2,\ldots,S_m)$}
%%\put(9,7){\circle*{0.25}}
%\put(15,10){$\Delta_{S_1}(S_1,S_2,\ldots,S_m)$}
%\put(15,7){$\Delta_{S_2}(S_1,S_2,\ldots,S_m)$}
%\put(15,2){$\Delta_{S_m}(S_1,S_2,\ldots,S_m)$}
%%\put(24,2){\circle*{0.25}}
%\put(19.5,6.2){\oval(11,10)}
%\put(15,0){$\Delta_j(S_1,S_2,\ldots,S_m)$}
%\multiput(19.5,4)(0,1){3}{\circle*{0.13}}
%\qbezier(9,6.8)(14.99,10.51)(15,10.3)
%\qbezier(9,6.8)(14.99,7.3)(15,7.3)
%\qbezier(9,6.8)(14.99,2.29)(15,2.3)
%\put(12.5,5){$\vdots$}
%\put(11,8.5){$b$}
%\put(13,9.8){$1$}
%\put(11,6){$b$}
%\put(13,6.1){$1$}
%\put(11,3.8){$b$}
%\put(13,2.3){$1$}
%\end{picture}}
%
%\begin{figure}[ht]
%\centering
%\usebox{\partitionofDeltajSonetom}
%\caption{The adjacency relations and intersection numbers between $B_j(S_1,S_2,\ldots,S_m)$ and $\Delta_j(S_1,S_2,\ldots,S_m)$.}
%\label{fig: partitionofDeltajSonetom}
%\end{figure}

%By Lemma~\ref{lem Delta j gamma 1 to 1 Delta h varepsilon}, $G_{\alpha\beta_j}$ induces a transitive group on $\mathcal{B}_m(c_s',j)$, and $G_\alpha$ induces a transitive group on $\displaystyle \mathcal{B}_m(c_s')=\bigcup\limits_{j\in X}\mathcal{B}_m(c_s',j)$. So $G_{\alpha\beta_j}$ induces a transitive group on $\mathcal{B}_j^{m,\mathcal{P}(c_s')}$.

By Lemma~\ref{lem Delta j gamma 1 to 1 Delta h varepsilon}, the stabilizer $G_{\alpha\beta_j}$ induces a transitive group on $\mathcal{B}_m(c_s',j)$, and so $G_{\alpha\beta_j}$ induces a transitive group on $\mathcal{B}_j^{m,\mathcal{P}(c_s')}$. Then \begin{equation}\label{eqn fmj}\f_j(m)=|B_j(S_1,S_2,\ldots,S_m)|\end{equation} is independent of $\{S_1,S_2,\ldots,S_m\}\in \mathcal{B}_m(c_s',j)$. Let $S\in \mathcal{B}(c_s',j)$. We define $\mathcal{B}_m^S(c_s',j)=\Bigl\{\{S,T_1,T_2,\ldots,T_{m-1}\}\in \mathcal{B}_m(c_s',j)\Bigr\}$.

Recall that
\begin{eqnarray*}
\mathcal{B}(c_s',j)&=&\{S\in \mathcal{B}(c_s')\mid j\in S\},\\
\mathcal{P}(c_s',j)&=&\{\Delta(S)\mid S\in \mathcal{B}(c_s',j)\},\\
B_j^{\mathcal{P}(c_s')}&=&\{B_j(S)\mid S\in \mathcal{B}(c_s',j)\}.
\end{eqnarray*}
By Lemma~\ref{lem DeltaiS DeltaiT}, the stabilizer $G_{\alpha\beta_j}$ induces a transitive group on $\mathcal{P}(c_s',j)$, and so $G_{\alpha\beta_j}$ induces a transitive group on $B_j^{\mathcal{P}(c_s')}$. Then \begin{equation}\label{eqn fjS}\f_j=|B_j(S)|\end{equation} is independent of $S\in \mathcal{B}(c_s',j)$.

\begin{lemma}\label{lem Bm(cs',j) is a 1-design}
Let $j\in X$ and $\mu=|\mathcal{B}(c_s',j)|$. Then $\lambda=|\mathcal{B}_m^S(c_s',j)|$ is independent of $S\in \mathcal{B}(c_s',j)$, and so $(\mathcal{B}(c_s',j),\mathcal{B}_m(c_s',j))$ is a $1-(\mu,m,\lambda)$ design.
\end{lemma}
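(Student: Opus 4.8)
The plan is to run a standard symmetry argument: exhibit a group that acts on the incidence structure $(\mathcal{B}(c_s',j),\mathcal{B}_m(c_s',j))$ preserving point--block membership and acting transitively on the points, which forces the replication number to be constant. Observe first that, by the definition of $\mathcal{B}_m^S(c_s',j)$ given just before the lemma, the quantity $\lambda=|\mathcal{B}_m^S(c_s',j)|$ is exactly the number of blocks $\{S,T_1,\ldots,T_{m-1}\}\in\mathcal{B}_m(c_s',j)$ that contain the point $S$. Here the point set $\mathcal{B}(c_s',j)$ has $\mu$ elements, and each block of $\mathcal{B}_m(c_s',j)$ has exactly $m$ (pairwise distinct) points. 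Thus, granting that $\lambda$ is independent of $S$, the defining condition of a design from the start of Section~\ref{sec Local actions and designs} (case $t=1$) is met, and $(\mathcal{B}(c_s',j),\mathcal{B}_m(c_s',j))$ is a $1$-$(\mu,m,\lambda)$ design. So the entire content of the lemma is the $S$-independence of $\lambda$.

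For this I would use the action of the stabilizer $G_{\alpha\beta_j}$. Since any $g\in G_{\alpha\beta_j}$ fixes $\alpha$ and $\beta_j$, it induces a permutation of $X=\{1,\ldots,k\}$ fixing $j$, it fixes $\Delta$ setwise, and it fixes $B_j$ setwise. A short check gives $\Delta(S)^g=\Delta(S^g)$ and $B_j(S)^g=B_j(S^g)$ for $S\in\mathcal{B}(c_s',j)$, where $S^g$ denotes the image of $S$ under the induced permutation of $X$; since these operations preserve nonemptiness, $g$ permutes the points $\mathcal{B}(c_s',j)$ and, applying this coordinatewise to $m$-subsets, permutes the blocks $\mathcal{B}_m(c_s',j)$ via $\{S_1,\ldots,S_m\}\mapsto\{S_1^g,\ldots,S_m^g\}$, using the identity $B_j(S_1,\ldots,S_m)^g=B_j(S_1^g,\ldots,S_m^g)$. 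This action plainly respects set-membership, so $g$ carries the blocks through $S$ bijectively onto the blocks through $S^g$; that is, $\bigl(\mathcal{B}_m^S(c_s',j)\bigr)^g=\mathcal{B}_m^{S^g}(c_s',j)$.

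To finish, I would invoke the transitivity already recorded before the lemma: by Lemma~\ref{lem DeltaiS DeltaiT} the group $G_{\alpha\beta_j}$ is transitive on $\mathcal{P}(c_s',j)=\{\Delta(S)\mid S\in\mathcal{B}(c_s',j)\}$, and since $S\mapsto\Delta(S)$ is injective (distinct $\Delta(S)$ are disjoint by Lemma~\ref{lem DeltaiS DeltaiT}), this transitivity transfers to the point set $\mathcal{B}(c_s',j)$. Hence for any two points $S,S'$ there is $g\in G_{\alpha\beta_j}$ with $S^g=S'$, and the bijection above yields $|\mathcal{B}_m^S(c_s',j)|=|\mathcal{B}_m^{S'}(c_s',j)|$. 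Therefore $\lambda$ is independent of $S$, and the $1$-design conclusion follows.

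I do not expect a genuine obstacle here: the real work is carried by the earlier lemmas, Lemma~\ref{lem DeltaiS DeltaiT} and Lemma~\ref{lem Delta j gamma 1 to 1 Delta h varepsilon}, which already establish that $G_{\alpha\beta_j}$ acts transitively on $\mathcal{P}(c_s',j)$ and on $\mathcal{B}_m(c_s',j)$. The only care needed is the routine bookkeeping verifying that the induced $X$-action sends points to points and blocks to blocks while preserving incidence, in particular the identities $\Delta(S)^g=\Delta(S^g)$, $B_j(S)^g=B_j(S^g)$, and the fact that $g$ fixes $B_j$ setwise. Once these are in place, the $1$-design conclusion is immediate from the definition of a design.
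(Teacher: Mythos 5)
Your proof is correct, but it is not the argument the paper gives. The paper's own proof is a two-line counting argument: by Equation~(\ref{eqn BjS}) the set $B_j(S)$ is the disjoint union of the sets $B_j(S,T_1,\ldots,T_{m-1})$ over the blocks through $S$, so $\f_j=|B_j(S)|=|\mathcal{B}_m^S(c_s',j)|\,\f_j(m)$, and since $\f_j$ (Equation~(\ref{eqn fjS})) and $\f_j(m)$ (Equation~(\ref{eqn fmj})) have already been shown to be independent of all choices, $\lambda=\f_j/\f_j(m)$ is constant. You instead run the standard symmetry argument entirely at the level of the abstract incidence structure $(\mathcal{B}(c_s',j),\mathcal{B}_m(c_s',j))$: the induced $G_{\alpha\beta_j}$-action permutes points and blocks, preserves incidence, and is transitive on points, which forces the replication number to be constant. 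The two routes rest on overlapping but not identical foundations: the paper needs \emph{both} transitivity statements recorded before the lemma (transitivity on $\mathcal{P}(c_s',j)$, which gives the constancy of $\f_j$, and transitivity on $\mathcal{B}_m(c_s',j)$ via Lemma~\ref{lem Delta j gamma 1 to 1 Delta h varepsilon}, which gives the constancy of $\f_j(m)$), together with the disjoint-union decomposition~(\ref{eqn BjS}); your argument needs only the point-transitivity (transferred from $\mathcal{P}(c_s',j)$ to $\mathcal{B}(c_s',j)$ by the injectivity of $S\mapsto\Delta(S)$, which you correctly justify from the disjointness in Lemma~\ref{lem DeltaiS DeltaiT}) plus the routine equivariance identities $\Delta(S)^g=\Delta(S^g)$ and $B_j(S_1,\ldots,S_m)^g=B_j(S_1^g,\ldots,S_m^g)$. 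What your approach buys is economy of hypotheses and a purely design-theoretic proof; what the paper's approach buys is the explicit numerical relation $\f_j=\lambda\,\f_j(m)$ between the design parameter and the fibre sizes in the graph, and this same division trick is reused verbatim in the proof of Lemma~\ref{lem Bm(cs',j) is a tau-design}, so the paper's choice makes the two lemmas uniform. Note also that both your proof and the paper's ultimately lean on the assertion, stated without detailed proof before the lemma, that $G_{\alpha\beta_j}$ induces a transitive group on $\mathcal{P}(c_s',j)$; neither argument is more exposed on that point than the other.
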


\begin{proof}
By Equation~(\ref{eqn BjS}), we know that $\f_j=|B_j(S)|=|\mathcal{B}_m^S(c_s',j)|\f_j(m)$. So $|\mathcal{B}_m^S(c_s',j)|$ is independent of $S$.
\end{proof}

Now we give an example of Lemma~\ref{lem structural information P_s+1=[bb...b] where bs'=mb} and Lemma~\ref{lem Bm(cs',j) is a 1-design}. Let $k=4$, $X=\{1,2,3,4\}$, $c_s'=2$, $\mathcal{B}(c_s')=\{\{1,2\},\{1,3\},\{1,4\},\{2,3\},\{2,4\},\{3,4\}\}$, $m=2$, $b=1$ and $P_{s+1}(\alpha,\Delta)=[1,1]$. We first consider the adjacency relations between $B_1$ and $\Delta_1$. Then $\mathcal{B}(c_s',1)=\{\{1,2\},\{1,3\},\{1,4\}\}$. For $S=\{j,h\}\in \mathcal{B}(c_s')$, we use $j_h$ or $h_j$ to denote the set $S$, i.e. \begin{center}$j_h=h_j=\{j,h\}$.\end{center}
%For $2\leqslant j\leqslant 4$, we use $\displaystyle \hat{j}$ to denote $\{1,j\}$.   $2_1$ $3_1$ $4_1$ $3_4$ $4_2$
Then $\mathcal{B}(c_s',1)=\{2_1,3_1,4_1\}$. By Table~\ref{table 1-design for valency 3}, there is only one $1$-$(3,2,x)$ design on three points. Hence \begin{equation*}
\mathcal{B}_m(c_s',1)=\Bigl\{ \{2_1,3_1\}, \{2_1,4_1\}, \{3_1,4_1\}   \Bigr\}.
\end{equation*}
Note that
\begin{eqnarray*}
B_1(2_1)&=&B_1(2_1,3_1)\cup B_1(2_1,4_1),\\
B_1(3_1)&=&B_1(2_1,3_1)\cup B_1(3_1,4_1),\\
B_1(4_1)&=&B_1(2_1,4_1)\cup B_1(3_1,4_1);
\end{eqnarray*}
and
\begin{eqnarray*}
\Delta(2_1)&=&\Delta_{2_1}(2_1,3_1)\cup \Delta_{2_1}(2_1,4_1)=\Delta(1_2)=\Delta_{1_2}(1_2,3_2)\cup \Delta_{1_2}(1_2,4_2),\\
\Delta(3_1)&=&\Delta_{3_1}(2_1,3_1)\cup \Delta_{3_1}(3_1,4_1)=\Delta(1_3)=\Delta_{1_3}(1_3,2_3)\cup \Delta_{1_3}(1_3,4_3),\\
\Delta(4_1)&=&\Delta_{4_1}(2_1,4_1)\cup \Delta_{4_1}(3_1,4_1)=\Delta(1_4)=\Delta_{1_4}(1_4,2_4)\cup \Delta_{1_4}(1_4,3_4).
\end{eqnarray*}
The sets $B_1(2_1,3_1)$, $B_1(2_1,4_1)$ and $B_1(3_1,4_1)$ form a partition of $B_1$, and the sets $\Delta_{2_1}(2_1,3_1)$, $\Delta_{2_1}(2_1,4_1)$, $\Delta_{3_1}(2_1,3_1)$, $\Delta_{3_1}(3_1,4_1)$, $\Delta_{4_1}(2_1,4_1)$, $\Delta_{4_1}(3_1,4_1)$ form a partition of $\Delta_1=\Delta(2_1)\cup \Delta(3_1)\cup \Delta(4_1)$. Let $S,T\in \mathcal{B}(c_s',1)$. Since $b=1$, the edges between $B_1(S,T)$ and $\Delta_S(S,T)$ is a matching. So we get the adjacency relations between $B_1$ and $\Delta_1$ (see Figure~\ref{fig: B1}). The adjacency relations between $B_j$ and $\Delta_j$ for $2\leqslant j\leqslant 4$ are similarly got (see Figure~\ref{fig: adjacencyrelationskfourPoneone}). The edges in Figure~\ref{fig: adjacencyrelationskfourPoneone} represents the matching between $B_j(S,T)$ and $\Delta_S(S,T)$ for $S,T\in \mathcal{B}(c_s',j)$ and $1\leqslant j\leqslant 4$.

\begin{figure}[htb]
\centering
\subfigure[$j=1$]{
\label{fig: B1}
\includegraphics[width=0.48\textwidth]{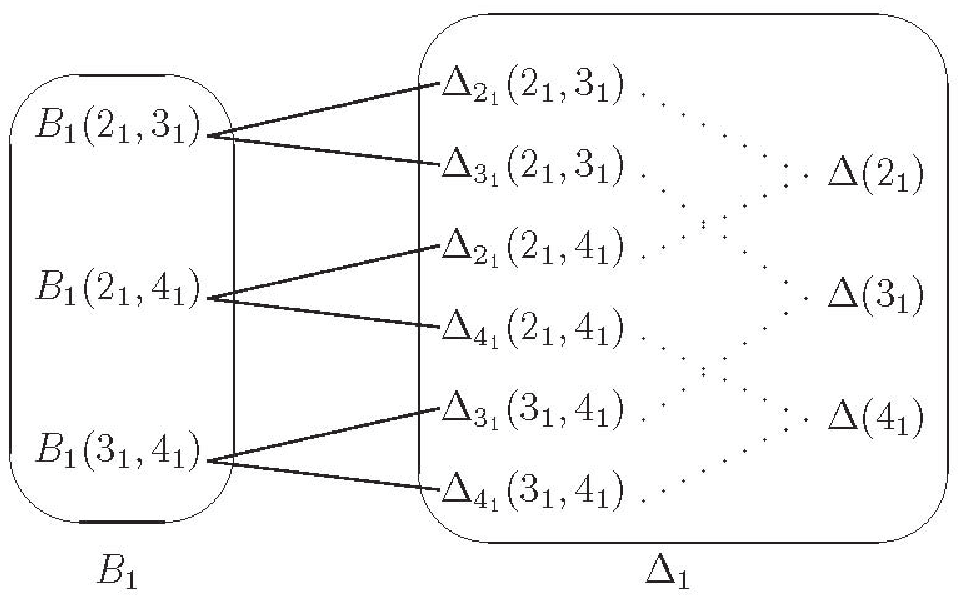}}
\subfigure[$j=2$]{
\label{fig: B2}
\includegraphics[width=0.48\textwidth]{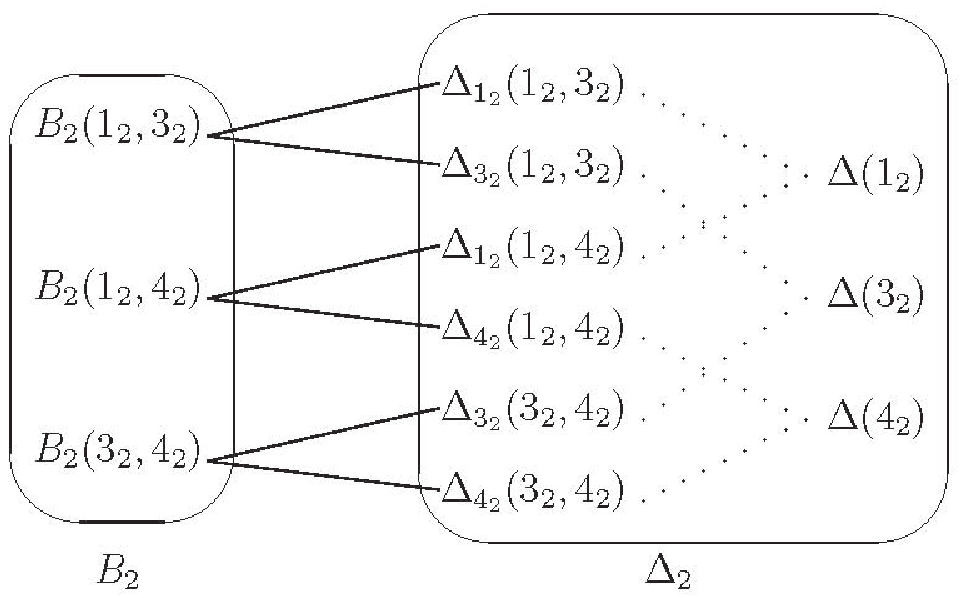}}
\subfigure[$j=3$]{
\label{fig: B3}
\includegraphics[width=0.48\textwidth]{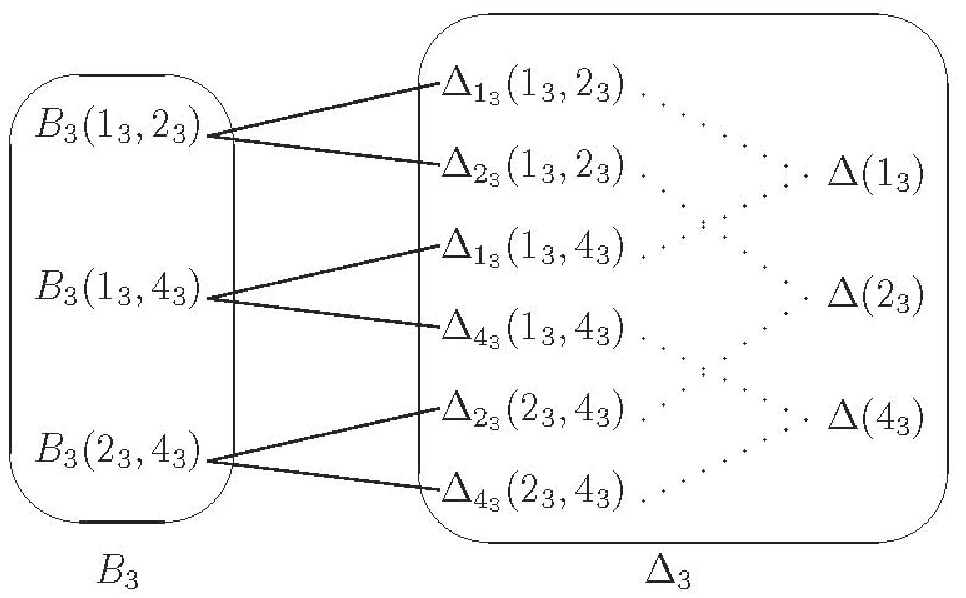}}
\subfigure[$j=4$]{
\label{fig: B4}
\includegraphics[width=0.48\textwidth]{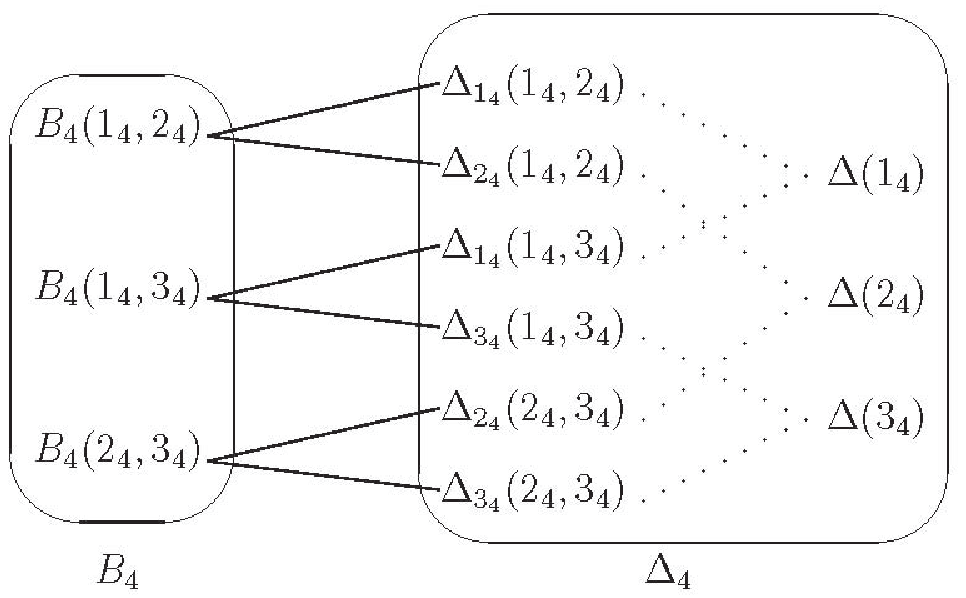}}
\caption{The adjacency relations between $B_j$ and $\Delta_j$ for $1\leqslant j\leqslant 4$.}
\label{fig: adjacencyrelationskfourPoneone}
\end{figure}

Since $G_{\alpha\beta_j}$ induces a transitive group on $B_j^{\mathcal{P}(c_s')}$, there exists an integer $\tau\geqslant 1$ such that the induced action of $G_{\alpha\beta_j}$ on $B_j^{\mathcal{P}(c_s')}$ is $\tau$-homogeneous. If $\tau\geqslant m$, then $\mathcal{B}_m(c_s',j)$ consists of all $m$-subsets of $\mathcal{B}(c_s',j)$. In the following, we may assume $\tau<m$. Let $\theta\geqslant 1$ such that $\tau+\theta=m$. Let $S_1,S_2,\ldots,S_\tau\in \mathcal{B}(c_s',j)$ be pairwise distinct. We define
\begin{eqnarray*}
\mathcal{B}_m^{S_1,S_2,\ldots,S_\tau}(c_s',j)&=&\Bigl\{\{S_1,S_2,\ldots,S_\tau,T_1,T_2,\ldots,T_\theta\}\in \mathcal{B}_m(c_s',j)\Bigr\},\\
B_j(S_1,S_2,\ldots,S_\tau)&=&\bigcap\limits_{r=1}^\tau B_j(S_r).
\end{eqnarray*}
Then \begin{equation}\label{eqn BjS1S2...Stau}B_j(S_1,S_2,\ldots,S_\tau)=\bigcap\limits_{\{S_1,S_2,\ldots,S_m\}\in \mathcal{B}_m^{S_1,S_2,\ldots,S_\tau}(c_s',j)}B_j(S_1,S_2,\ldots,S_m)\end{equation} and
\begin{equation}\label{eqn fjtau}\f_j(\tau)=|B_j(S_1,S_2,\ldots,S_\tau)|\end{equation} is independent of $S_1,S_2,\ldots,S_\tau$.

\begin{lemma}\label{lem Bm(cs',j) is a tau-design}
Let $j\in X$ and $\mu=|\mathcal{B}(c_s',j)|$. Let $S_1,S_2,\ldots,S_\tau\in \mathcal{B}(c_s',j)$ be pairwise distinct. Then the size $\lambda_\tau=|\mathcal{B}_m^{S_1,S_2,\ldots,S_\tau}(c_s',j)|$ is independent of $S_1,S_2,\ldots,S_\tau$, and so $(\mathcal{B}(c_s',j),\mathcal{B}_m(c_s',j))$ is a $\tau-(\mu,m,\lambda_\tau)$ design.
\end{lemma}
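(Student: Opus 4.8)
The plan is to imitate, essentially verbatim, the $\tau=1$ argument used in Lemma~\ref{lem Bm(cs',j) is a 1-design}, with a single point $S$ replaced by a $\tau$-subset $\{S_1,\ldots,S_\tau\}$ and a vertex-count of $B_j$ carried out through the finer partition $B_j^{m,\mathcal{P}(c_s')}$. The two facts that drive the argument are already available: $\f_j(\tau)=|B_j(S_1,\ldots,S_\tau)|$ is independent of the chosen $\tau$-subset by Equation~(\ref{eqn fjtau}), and every block-piece $B_j(S_1,\ldots,S_m)$ has the common size $\f_j(m)$ by Equation~(\ref{eqn fmj}); both rest on the $\tau$-homogeneity of the induced $G_{\alpha\beta_j}$-action on $B_j^{\mathcal{P}(c_s')}$ together with Lemma~\ref{lem Delta j gamma 1 to 1 Delta h varepsilon}. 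So the design property is really just the ratio $\lambda_\tau=\f_j(\tau)/\f_j(m)$ being choice-free.

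First I would record the decomposition behind Equation~(\ref{eqn BjS1S2...Stau}): a vertex $\gamma\in B_j(S_1,\ldots,S_\tau)=\bigcap_{r=1}^{\tau}B_j(S_r)$ exactly when its block $B_j^\gamma$ is an $m$-set containing $\{S_1,\ldots,S_\tau\}$, so $B_j(S_1,\ldots,S_\tau)$ is the union of the pieces $B_j(S_1,\ldots,S_m)$ over all $\{S_1,\ldots,S_m\}\in\mathcal{B}_m^{S_1,\ldots,S_\tau}(c_s',j)$. These pieces are pairwise disjoint since $B_j^{m,\mathcal{P}(c_s')}$ partitions $B_j$, hence counting vertices gives $\f_j(\tau)=|\mathcal{B}_m^{S_1,\ldots,S_\tau}(c_s',j)|\,\f_j(m)=\lambda_\tau\,\f_j(m)$. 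As $\f_j(\tau)$ and $\f_j(m)$ are both independent of all choices, $\lambda_\tau=\f_j(\tau)/\f_j(m)$ is independent of the choice of $\{S_1,\ldots,S_\tau\}$, which is precisely the $\tau$-design condition.

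To finish I would check admissibility. The point set $\mathcal{B}(c_s',j)$ has $\mu$ elements, every block in $\mathcal{B}_m(c_s',j)$ is an $m$-subset of it, and the standing assumption $\tau<m$ together with $m\leqslant\mu$ yields $1\leqslant\tau\leqslant m\leqslant\mu$. For positivity note $B_j\neq\emptyset$, so $\mathcal{B}_m(c_s',j)\neq\emptyset$; double counting incidences of $\tau$-subsets in blocks then gives $\binom{\mu}{\tau}\lambda_\tau=\binom{m}{\tau}|\mathcal{B}_m(c_s',j)|>0$, whence $\lambda_\tau\geqslant 1$. Thus $(\mathcal{B}(c_s',j),\mathcal{B}_m(c_s',j))$ is a $\tau$-$(\mu,m,\lambda_\tau)$ design.

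The only delicate point is the constancy of $\f_j(\tau)$, i.e. that $|B_j(S_1,\ldots,S_\tau)|$ does not depend on the chosen $\tau$-subset; this is exactly where $\tau$-homogeneity is indispensable. Given two $\tau$-subsets, one applies an element $g\in G_{\alpha\beta_j}$ realising the induced $\tau$-homogeneous action to carry $\bigcap_{r}B_j(S_r)$ bijectively onto the other such intersection, using that $g$ fixes $\beta_j$ and permutes the pieces $B_j(S)$ according to Lemma~\ref{lem DeltaiS DeltaiT}. Beyond securing that input, the proof is the word-for-word $\tau$-analogue of Lemma~\ref{lem Bm(cs',j) is a 1-design} and introduces no new ideas.
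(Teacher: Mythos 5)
Your proof is correct and follows essentially the same route as the paper: both reduce the claim to the count $\f_j(\tau)=|\mathcal{B}_m^{S_1,S_2,\ldots,S_\tau}(c_s',j)|\,\f_j(m)$ obtained by decomposing $B_j(S_1,\ldots,S_\tau)$ into the pieces $B_j(S_1,\ldots,S_m)$ over $\mathcal{B}_m^{S_1,\ldots,S_\tau}(c_s',j)$, and then invoke the choice-independence of $\f_j(\tau)$ and $\f_j(m)$ (Equations~(\ref{eqn fjtau}) and~(\ref{eqn fmj})), which rests on the $\tau$-homogeneity of the induced action of $G_{\alpha\beta_j}$. Your reading of that decomposition as a disjoint union (the paper's Equation~(\ref{eqn BjS1S2...Stau}) writes an intersection, evidently a typo) and your admissibility and positivity checks are refinements of, not departures from, the paper's argument.
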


\begin{proof}
By Equation~(\ref{eqn BjS1S2...Stau}), we know that \begin{center}$\f_j(\tau)=|B_j(S)|=|\mathcal{B}_m^{S_1,S_2,\ldots,S_\tau}(c_s',j)|\f_j(m)$.\end{center} So $|\mathcal{B}_m^{S_1,S_2,\ldots,S_\tau}(c_s',j)|$ is independent of $S_1,S_2,\ldots,S_\tau$.
\end{proof}

\section{Cubic $s$-Distance-transitive graphs}\label{sec cubic s-Distance-transitive graphs}

%\begin{theorem}\label{theorem valency 3 d-1 DT is DR}
%Let $\Gamma$ be a $(G,d-1)$-distance transitive graph with valency three where $G\leqslant \Aut(\Gamma)$ and $d\geqslant 2$ is the diameter of $\Gamma$. Then $\Gamma$ is distance regular.
%\end{theorem}
%
%\begin{proof}
%
%\end{proof}

\begin{lemma}\label{lem 3ndt is DR}
Let $\Gamma$ be a cubic graph with diameter $d\geqslant 2$. Let $s=d-1$ and $G\leqslant \Aut(\Gamma)$. If $\Gamma$ is $(G,s)$-distance-transitive, then $\Gamma$ is distance-regular.
\end{lemma}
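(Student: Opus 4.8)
The plan is to establish distance-regularity by verifying that the intersection numbers depend only on the distance. Since $\Gamma$ is $(G,s)$-distance-transitive with $s=d-1$, Lemma~\ref{lem s-dt properties of intersection numbers} already provides well-defined constants $c_i(\Gamma),a_i(\Gamma),b_i(\Gamma)$ for $1\leqslant i\leqslant d-1$, and $\Gamma$ is $3$-regular by hypothesis. Because $b_d=0$ (the diameter is $d$) and $a_d=k-c_d$, the graph is distance-regular as soon as $c_d(\delta)=|\Gamma_1(\delta)\cap\Gamma_{d-1}(\alpha)|$ is shown to be independent of $\delta\in\Gamma_d(\alpha)$. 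By $G$-vertex-transitivity it suffices to fix one vertex $\alpha$ and prove that this value is the same across all $G_\alpha$-orbits in $\Gamma_d(\alpha)$.

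First I would record the dichotomy coming from $b_{d-1}=b_s$, which is a positive constant by Lemma~\ref{lem s-dt properties of intersection numbers} and satisfies $b_{d-1}\leqslant k-1=2$, so $b_{d-1}\in\{1,2\}$. If $b_{d-1}=2$ then $c_{d-1}+a_{d-1}=1$ forces $c_i=1$ and $a_i=0$ for all $i\leqslant d-1$, i.e. the partial intersection array at $\alpha$ is tree-like and $g\geqslant 2d$; if $b_{d-1}=1$ then $c_{d-1}+a_{d-1}=2$ produces a short cycle through $\alpha$ and $g\leqslant 2d-1$.

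The easy case is $b_{d-1}=1$. Then every $\beta\in\Gamma_{d-1}(\alpha)$ has a unique neighbour in $\Gamma_d(\alpha)$, so $\beta\mapsto \Gamma_1(\beta)\cap\Gamma_d(\alpha)$ defines a map $f\colon \Gamma_{d-1}(\alpha)\to\Gamma_d(\alpha)$. It is surjective, since the penultimate vertex of a geodesic from $\alpha$ to any $\delta\in\Gamma_d(\alpha)$ is sent to $\delta$, and it is $G_\alpha$-equivariant; as $G_\alpha$ is transitive on the domain $\Gamma_{d-1}(\alpha)$, it is then transitive on the image $\Gamma_d(\alpha)$. Hence $c_d$ is constant (indeed $\Gamma$ is $G$-distance-transitive in this case), and distance-regularity follows.

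The substantial case is $b_{d-1}=2$, where $g\geqslant 2d=2s+2$ and the machinery of Section~\ref{sec Local actions and designs} applies. The crucial simplification for valency $3$ is that any transitive subgroup of $\mathrm{Sym}(3)$ is $t$-homogeneous for every $t\leqslant 3$, so the local action of $G_\alpha$ on $\Gamma_1(\alpha)$ is $c_s'$-homogeneous and Theorem~\ref{theorem t-design} (taking $t=c_s'$) shows that, for each $G_\alpha$-orbit $\Delta\subseteq\Gamma_d(\alpha)$, the block set $\mathcal{B}(c_s'(\Delta))$ consists of all $c_s'(\Delta)$-subsets of $\{1,2,3\}$. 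By Corollary~\ref{cor s+1 orbit} we have $\sum_{\Delta}b_s'(\Delta)=b_s=2$, so there are at most two orbits: either a single orbit with $b_s'=2$, where $c_d=c_s'$ is immediately constant, or two orbits $\Delta_1,\Delta_2$ each with $b_s'=1$. I expect the two-orbit subcase to be the main obstacle, since there one must exclude $c_s'(\Delta_1)\neq c_s'(\Delta_2)$. I would attack it with the fine adjacency description of Section~\ref{sec Adjacency relations} (Lemma~\ref{lem structural information between Gammasalpha and Delta}, as $b_s'=1$ forces matchings between the branches $B_j$ and each $\Delta_i$), together with the edge count $c_s'(\Delta_i)\,|\Delta_i|=|\Gamma_{d-1}(\alpha)|=3\cdot 2^{d-2}$ and a comparison, via $G$-vertex-transitivity, of the orbit data seen from $\alpha$ with that seen from a vertex $\delta\in\Gamma_d(\alpha)$ (for which $\alpha\in\Gamma_d(\delta)$); the rigidity of the valency-$3$ designs listed in Table~\ref{table 1-design for valency 3} should then force the two parameters to coincide, giving a constant $c_d$ and hence distance-regularity.
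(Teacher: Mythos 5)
Your framework is the same as the paper's, and two of your three cases are handled correctly: the reduction to showing $c_d$ is constant across the $G_\alpha$-orbits in $\Gamma_d(\alpha)$, the equivariant-surjection argument when $b_{d-1}=1$ (which indeed gives $G$-distance-transitivity outright), and the single-orbit subcase are all sound. Your entry into the design machinery is also legitimate and in one respect slicker than the paper's: you get homogeneity of the local action for free, since every transitive subgroup of $\mathrm{Sym}(3)$ is $t$-homogeneous for all $t\leqslant 3$, whereas the paper must first prove $d\geqslant 3$ and then invoke $(G,2)$-arc-transitivity to obtain $2$-homogeneity.

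The genuine gap is exactly at the point you flag as ``the main obstacle'': the two-orbit case $b_1'=b_2'=1$ with $c_1'\neq c_2'$ is never actually refuted. You list tools and say the rigidity of Table~\ref{table 1-design for valency 3} ``should'' force $c_1'=c_2'$, but it does not: the configuration $(c_1',c_2')=(2,3)$ is perfectly consistent with everything you invoke — each orbit separately carries a legitimate design (the $2$-$(3,2,1)$ and the $3$-$(3,3,1)$ designs), and the edge counts are satisfied by $\delta_1=3\cdot 2^{d-3}$, $\delta_2=2^{d-2}$. Something sharper is needed, and the paper supplies it in three steps: (i) eliminate $d=2$ for each of $(c_1',c_2')\in\{(1,2),(1,3),(2,3)\}$, using integrality of $\delta_i$ and, for $(1,3)$, the fact that $a_{1,1}=2$ would make $\Delta_1$ a triangle, violating girth $\geqslant 4$; (ii) use the $2$-design constraint from Theorem~\ref{theorem t-design} together with Table~\ref{table 1-design for valency 3} to reduce to $(c_1',c_2')=(2,3)$; (iii) compute $a_{1,1}=1$ and $a_{1,2}=a_{2,1}=a_{2,2}=0$, so every vertex of $\Gamma_d(\alpha)$ has at most one neighbour inside $\Gamma_d(\alpha)$, and then change basepoint to the neighbour $\beta_3$ of $\alpha$: one checks $\Gamma_d(\beta_3)=B_1\cup B_2\cup\Delta_1(\{1,2\})$, and every vertex of $\Delta_1(\{1,2\})$ has one neighbour in $B_1$ and one in $B_2$, hence at least two neighbours inside $\Gamma_d(\beta_3)$ — contradicting vertex-transitivity, which forces the maximum induced degree on $\Gamma_d(\cdot)$ to be a constant. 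Note also that your proposed comparison takes $\delta\in\Gamma_d(\alpha)$ as the second basepoint, but nothing in Sections~\ref{sec Local actions and designs} and~\ref{sec Adjacency relations} gives a usable decomposition of $\Gamma_d(\delta)$; the paper's choice of a basepoint \emph{adjacent} to $\alpha$ is what makes the decomposition, and hence the contradiction, available. Without step (iii), or some substitute invariant, the lemma is not proved.
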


\begin{proof}
Suppose $\Gamma$ is $(G,s)$-distance-transitive. There are two cases we need to consider. If $\Gamma$ is $G$-distance-transitive, then $\Gamma$ is distance-regular. Now we assume that $\Gamma$ is not $G$-distance-transitive. Take any vertex $\alpha\in V(\Gamma)$. Then the vertex stabilizer $G_\alpha$ is transitive on $\Gamma_i(\alpha)$ for $1\leqslant i\leqslant s$ but not on $\Gamma_d(\alpha)$. By Corollary~\ref{cor s+1 orbit}, there has exactly two $G_\alpha$-orbits $\Delta_1$ and $\Delta_2$ in $\Gamma_d(\alpha)$ and $b_s(\Gamma,\alpha)=2$. By Lemma~\ref{lem s-dt properties of intersection numbers}, we have $(c_i(\Gamma,\alpha),a_i(\Gamma,\alpha),b_i(\Gamma,\alpha))=(1,0,2)$ for $1\leqslant i\leqslant s$. This implies that $\Gamma$ is $(G,s)$-arc-transitive and the girth of $\Gamma$ is at least $2d$. Let $\delta_i=|\Delta_i|$, $b_i'=b_s(\Gamma,G_\alpha,\Delta_i)$, $c_i'=c_s'(\Gamma,G_\alpha,\Delta_i)$, and $a_{i,j}=\kappa(\Gamma,G_\alpha;\Delta_i,\Delta_j)$ for $1\leqslant i,j\leqslant 2$. In the following, we will show $c_1'=c_2'$ which implies that $\Gamma$ is distance-regular. We prove this by contrary.

From now on, we suppose $c_1'\neq c_2'$. Without loss of generality, we may assume $c_1' < c_2'$. We first show that the diameter $d\geqslant 3$. We need consider three cases $(c_1',c_2')=(1,2),(1,3)$ or $(2,3)$. We first suppose $c_i'=2$ for some $1\leqslant i\leqslant 2$. Then $\delta_i=k_{d-1}/2=|\Gamma_{d-1}(\alpha)|/2=3\cdot 2^{d-3}$. Since $\delta_i$ is an integer, we have $d\geqslant 3$. Now let $(c_1',c_2')=(1,3)$. Note that $k_s=|\Gamma_s(\alpha)|=3\cdot 2^{d-2}$. Then $\delta_1=k_s=3\cdot 2^{d-2}$, $\delta_2=k_s/3=2^{d-2}$ and $3a_{1,2}=a_{2,1}$. By valency restriction, we have $a_{2,2}=0$ and $a_{1,2}=a_{2,1}=0$. So $a_{1,1}=2$. If $d=2$, then $\delta_1=3$ and the induced subgraph on $\Delta_1$ is a triangle. This contradicts the girth of $\Gamma$ is at least $4$. So $d\geqslant 3$.

Now $\Gamma$ is $(G,2)$-arc transitive, and so $G_\alpha$ is $2$-transitive on $\Gamma_1(\alpha)$ which implies $G_\alpha$ is $2$-homogeneous on $\Gamma_1(\alpha)$. By Theorem~\ref{theorem t-design}, for $1\leqslant i\leqslant 2$, there is a $2$-$(3,c_i',x_i)$ design $D(\Delta_i)$ relative to the $G_\alpha$-orbit $\Delta_i$. There are exactly two $2$-designs on $3$ points (see Table~\ref{table 1-design for valency 3}). Hence $(c_1',c_2')=(2,3)$. Then $\delta_1=k_s/2=3\cdot 2^{d-3}$, $\delta_2=k_s/3=2^{d-2}$, $3a_{1,2}=2a_{2,1}$. By valency restriction, we have $a_{2,2}=0$, $a_{1,2}=a_{2,1}=0$. So $a_{1,1}=1$.
%Since the induced subgraph on $\Delta_1$ is $1$-regular, then $2 ~\bigl|~ \delta_1$, and so $d\geqslant 4$.
We use notations as in Section~\ref{sec Local actions and designs}, and we suggest the reader to draw a picture as in Figure~\ref{fig: Adjacency relations k3bi'1}. Take $\beta=\beta_3$. Then $\Delta_2=(\Delta_{2})_3\subseteq \Gamma_s(\beta)$, $(\Delta_{1})_3=\Delta_1(\{1,3\})\cup \Delta_1(\{2,3\})\subseteq \Gamma_s(\beta)$, and the sets $B_1$, $B_2$ and $(\Delta_{1})_3'=\Delta_1(\{1,2\})$ form a partition of $\Gamma_d(\beta)$. Then for any vertex in $(\Delta_{1})_3'$, it has exactly two neighbors in $B_1\cup B_2$. Thus any vertex in $(\Delta_{1})_3'$ has degree at least two in the induced subgraph on $\Gamma_d(\beta)$. While the induced subgraph on $\Gamma_d(\alpha)$ contains no vertex of degree greater than one. A contradiction.
\end{proof}

Cubic distance-regular graphs were classified in~\cite{Cubic distance-regular graphs}, and these graphs are either distance-transitive~\cite{Biggs and Smith On trivalent graphs} or the Tutte's $12$-cage which is not vertex transitive. So a vertex-transitive and distance-regular graph with valency three must be distance-transitive.

\begin{corollary}\label{cor 3ndt is DT}
Let $\Gamma$ be a cubic graph with diameter $d\geqslant 2$. Let $s=d-1$ and $G\leqslant \Aut(\Gamma)$. If $\Gamma$ is $(G,s)$-distance-transitive, then $\Gamma$ is distance-transitive.
\end{corollary}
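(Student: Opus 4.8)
The plan is to derive this as a short corollary of Lemma~\ref{lem 3ndt is DR} together with the classification of cubic distance-regular graphs recalled in the paragraph immediately above the statement. First I would apply Lemma~\ref{lem 3ndt is DR}: under exactly the present hypotheses (cubic, diameter $d\geqslant 2$, $(G,s)$-distance-transitive with $s=d-1$) that lemma already yields that $\Gamma$ is distance-regular. Thus the task reduces to upgrading \emph{distance-regular} to \emph{distance-transitive}.

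The bridge is vertex-transitivity. Since $s=d-1\geqslant 1$, the definition of $(G,s)$-distance-transitivity gives in particular that $G$ is transitive on the ordered pairs of vertices at distance $1$, i.e.\ on the arc-set $A\Gamma$; as $\Gamma$ is connected, arc-transitivity forces $G$ to be transitive on $V\Gamma$, so $\Gamma$ is vertex-transitive as an abstract graph.

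Finally I would invoke the classification in~\cite{Cubic distance-regular graphs} (together with~\cite{Biggs and Smith On trivalent graphs}), as recalled above: every cubic distance-regular graph is either distance-transitive or the Tutte $12$-cage, and the Tutte $12$-cage fails to be vertex-transitive. Because $\Gamma$ has just been shown to be vertex-transitive, it cannot be the Tutte $12$-cage; hence $\Gamma$ is distance-transitive, which completes the argument. I expect no genuine obstacle here: essentially all of the content sits in the already-proved Lemma~\ref{lem 3ndt is DR} and the cited external theorem, and the only point requiring (minor) care is the elementary deduction of $G$-vertex-transitivity from the $s\geqslant 1$ case of the hypothesis, which is precisely what rules out the single non-vertex-transitive exception appearing in the classification.
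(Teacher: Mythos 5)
Your proposal is correct and follows essentially the same route as the paper: Lemma~\ref{lem 3ndt is DR} gives distance-regularity, and the classification of cubic distance-regular graphs (distance-transitive or the non-vertex-transitive Tutte $12$-cage) combined with the vertex-transitivity of $\Gamma$ rules out the exceptional case. Your explicit derivation of vertex-transitivity from transitivity on distance-$1$ pairs is a point the paper leaves implicit, but it is the same argument.
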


Magma codes of cubic distance-transitive graphs can be found on Marston Conder's homepage~\cite{MarstonHomepage}. Let $\Gamma$ be a cubic distance-transitive graph with diameter $d\geqslant 2$ such that there is an automorphism group $G\leqslant \Aut(\Gamma)$ satisfying $\Gamma$ is $(G,s)$-distance-transitive but not $G$-distance-transitive where $s=d-1$. By Magma~\cite{Magma}, we can check that the only possible of such a cubic distance-transitive graph is $\Gamma=K_{3,3}$ with automorphism group $G=C_3\wr C_2<\Aut(\Gamma)=\S_3\wr C_2$. Here $C_n$ is a cyclic group of order $n$, $\S_n$ is the symmetric group on $n$ letters and the symbol $\wr$ means the wreath product. By the proof of Lemma~\ref{lem 3ndt is DR} and this discussion, we have the following result.

%A graph $\Gamma$ with diameter $d\geqslant 2$ is called a $G$-near distance-transitive graph if $\Gamma$ is a $(G,d-1)$-distance transitive graph where $G\leqslant \Aut(\Gamma)$. By the above theorem, we have the following corollary.

\begin{theorem}\label{thm 3ndt girth at most 2d-1}
Let $\Gamma$ be a cubic graph with diameter $d\geqslant 3$ and girth $g$. Let $s=d-1$ and let $G\leqslant \Aut(\Gamma)$ such that $\Gamma$ is $(G,s)$-distance-transitive but not $G$-distance-transitive. Then $g\leqslant 2d-1$.
\end{theorem}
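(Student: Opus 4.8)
The plan is to prove the contrapositive statement, or rather to argue by contradiction: assume the girth satisfies $g\geqslant 2d$, and derive a contradiction with the hypothesis that $\Gamma$ is not $G$-distance-transitive. The key observation is that when $g\geqslant 2d$, the $\alpha$-girth $\lambda_\alpha(\Gamma)\geqslant 2d=2(s+1)=2s+2$ at every vertex $\alpha$, so the machinery of Section~\ref{sec Local actions and designs} applies directly. In particular, Corollary~\ref{cor s+1 orbit} and Lemma~\ref{lem s-dt properties of intersection numbers} give us full control of the partial intersection array up through distance $s$, and the local action of $G_\alpha$ on $\Gamma_1(\alpha)$ is well-behaved.

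First I would invoke Lemma~\ref{lem 3ndt is DR} to conclude that $\Gamma$ is already distance-regular, and then Corollary~\ref{cor 3ndt is DT} to conclude it is in fact distance-transitive. This is the crucial reduction: the hypothesis ``$(G,s)$-distance-transitive but not $G$-distance-transitive'' now says that $\Gamma$ is a distance-transitive cubic graph on which some proper subgroup $G\leqslant\Aut(\Gamma)$ acts $(d-1)$-distance-transitively but not distance-transitively. Next I would appeal to the classification of cubic distance-regular graphs from~\cite{Cubic distance-regular graphs} together with the explicit computational check (via Magma~\cite{Magma} and Conder's data~\cite{MarstonHomepage}) described in the paragraph preceding the theorem: the only cubic distance-transitive graph admitting such a group $G$ is $K_{3,3}$, with $G=C_3\wr C_2$.

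The contradiction then comes from the diameter hypothesis $d\geqslant 3$. The graph $K_{3,3}$ is the complete bipartite graph, which has diameter $d=2$, not $d\geqslant 3$. Hence no cubic graph of diameter $d\geqslant 3$ and girth $g\geqslant 2d$ can be $(G,s)$-distance-transitive without being $G$-distance-transitive. Equivalently, if $\Gamma$ has diameter $d\geqslant 3$ and is $(G,d-1)$-distance-transitive but not $G$-distance-transitive, then its girth must satisfy $g\leqslant 2d-1$, which is exactly the assertion.

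The main obstacle, and the step requiring the most care, is the transition from the abstract hypothesis to the concrete classification: one must be certain that the computational enumeration genuinely exhausts all cubic distance-transitive graphs of diameter at least two and verifies that $K_{3,3}$ is the unique one carrying a proper subgroup $G$ witnessing $(d-1)$-distance-transitivity without full distance-transitivity. Since this enumeration is finite (the list of cubic distance-transitive graphs is finite and explicitly tabulated) and is carried out in the cited discussion, the remaining argument is purely a matter of reading off the diameter of the single exceptional graph and comparing it with the standing assumption $d\geqslant 3$. No girth estimate needs to be proved by hand; the inequality $g\leqslant 2d-1$ falls out as the logical negation of the excluded regime $g\geqslant 2d$.
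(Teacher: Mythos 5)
Your proposal is correct and takes essentially the same route as the paper's own proof: assume $g\geqslant 2d$, invoke Lemma~\ref{lem 3ndt is DR} (and Corollary~\ref{cor 3ndt is DT}) to conclude $\Gamma$ is distance-regular and hence distance-transitive, then use the cited Magma/classification check that $K_{3,3}$ is the only cubic distance-transitive graph admitting such a subgroup $G$, and derive the contradiction from $K_{3,3}$ having diameter $2<3$. Nothing essential is missing.
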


\begin{proof}
Suppose $g\geqslant 2d$. By the proof of Lemma~\ref{lem 3ndt is DR}, the graph $\Gamma$ is distance-regular and so distance-transitive. The only possible graph is $K_{3,3}$ with diameter $d=2<3$. This is a contradiction.
\end{proof}

\section{Tetravalent $s$-Distance-transitive graphs\newline with large girth}\label{sec tetravalent s-Distance-transitive graphs}

%girth $g\geqslant 2d$:
%\begin{enumerate}
%\item $3$ orbits in $\Gamma_d(\alpha)$, this case implies distance-regular by the relation between the adjacency relations and designs;
%\item $2$ orbits in $\Gamma_d(\alpha)$, this case needs some work.
%\end{enumerate}

\begin{lemma}\label{lem 4ndt girth at least 2d is DR}
Let $\Gamma$ be a tetravalent graph with diameter $d\geqslant 3$ and girth $g\geqslant 2d$. Let $s=d-1$ and let $G\leqslant \Aut(\Gamma)$ such that $\Gamma$ is $(G,s)$-distance-transitive. Then $\Gamma$ is distance-regular.
\end{lemma}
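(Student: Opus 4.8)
The plan is to mirror the cubic argument of Lemma~\ref{lem 3ndt is DR}, upgraded to valency $k=4$. If $\Gamma$ is $G$-distance-transitive we are done, so assume not, and fix $\alpha\in V\Gamma$. Then $G_\alpha$ is transitive on $\Gamma_i(\alpha)$ for $1\leqslant i\leqslant s$ but not on $\Gamma_d(\alpha)$. Since $g\geqslant 2d=2s+2$ we have $\lambda_\alpha(\Gamma)\geqslant 2s+2$, so Lemma~\ref{lem alpha girth >=2s+2} gives $(c_i(\Gamma,\alpha),a_i(\Gamma,\alpha),b_i(\Gamma,\alpha))=(1,0,3)$ for $1\leqslant i\leqslant s$ together with unique shortest paths up to distance $s$; as in Lemma~\ref{lem 3ndt is DR} this upgrades $(G,s)$-distance-transitivity to $(G,s)$-arc-transitivity, and since $s=d-1\geqslant 2$ the local action of $G_\alpha$ on $\Gamma_1(\alpha)$ is $2$-transitive, hence one of $\A_4$, $\S_4$, and in particular $2$-homogeneous. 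By Corollary~\ref{cor s+1 orbit} the $G_\alpha$-orbits $\Delta_1,\ldots,\Delta_r$ in $\Gamma_d(\alpha)$ satisfy $\sum_i b_i'=3$ with $b_i':=b_s'(\Gamma,G_\alpha,\Delta_i)\geqslant 1$; as $\Gamma$ is not $G$-distance-transitive, $r\in\{2,3\}$, with $(b_1',b_2')=(1,2)$ when $r=2$ and $(b_1',b_2',b_3')=(1,1,1)$ when $r=3$. Writing $c_i'=c_s'(\Gamma,G_\alpha,\Delta_i)$, distance-regularity is equivalent to $c_1'=\cdots=c_r'$ (the parameters below $d$ are already constant, and $c_d$ on the pair $(\alpha,\delta)$ equals $c_i'$ for $\delta\in\Delta_i$), so the goal is exactly to prove all $c_i'$ equal.

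Next I would set up the combinatorial constraints. Since $\lambda(\Gamma,\alpha)\geqslant 2s+2$ and the local action is $2$-homogeneous, Theorem~\ref{theorem t-design} applies to each $\Delta_i$ (with $t=2$ when $c_i'\geqslant 2$, and $t=1$ when $c_i'=1$); as $\A_4$ and $\S_4$ are transitive on the $m$-subsets of $X=\{1,2,3,4\}$ for every $m$, the block set $\mathcal{B}(c_i')$ consists of all $c_i'$-subsets of $X$ (cf.\ Table~\ref{table 1-design for valency 4}). Counting edges between $\Gamma_{d-1}(\alpha)$ and $\Delta_i$ gives $|\Delta_i|c_i'=k_sb_i'$ with $k_s=4\cdot 3^{d-2}$, so $|\Delta_i|=4\cdot 3^{d-2}b_i'/c_i'$ must be a positive integer. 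Put $a_{i,j}=\kappa(\Gamma,G_\alpha;\Delta_i,\Delta_j)$; then $\sum_j a_{i,j}=4-c_i'$ and $|\Delta_i|a_{i,j}=|\Delta_j|a_{j,i}$. The crucial girth input is that adjacent vertices $\delta\in\Delta_i$, $\delta'\in\Delta_{i'}$ of $\Gamma_d(\alpha)$ have $S(\delta)\cap S(\delta')=\emptyset$: a vertex shared by a geodesic $\alpha\to\delta$ and a geodesic $\alpha\to\delta'$ at distance $p\geqslant 1$ would produce a cycle of length at most $2(d-p)+1\leqslant 2d-1<g$. Hence $a_{i,i'}>0$ forces $c_i'+c_{i'}'\leqslant 4$, and in particular $a_{i,i}>0$ forces $c_i'\leqslant 2$.

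The engine of the contradiction is a comparison of induced subgraphs. By vertex-transitivity any $g\in G$ with $\alpha^g=\beta_j$ carries $\Gamma_d(\alpha)$ isomorphically onto $\Gamma_d(\beta_j)$, so these two induced subgraphs have the same degree sequence. In $\Gamma_d(\alpha)$ a vertex of $\Delta_i$ has degree $4-c_i'$, so the maximum degree is $M=4-\min_i c_i'$. On the other hand $\Gamma_d(\beta_j)$ is the disjoint union of $\{\gamma\in B_h\mid h\neq j\}$, which spans no edges because $a_s(\Gamma,\alpha)=0$, and $\{\delta\in\Gamma_d(\alpha)\mid j\notin S(\delta)\}$; moreover a vertex $\delta\in\Delta_i$ with $j\notin S(\delta)$ retains all $c_i'$ of its neighbours in $\Gamma_{d-1}(\alpha)$ inside $\Gamma_d(\beta_j)$, so its degree there is at least $c_i'$. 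Choosing, for each orbit with $c_i'\leqslant 3$, a vertex $\delta$ and an index $j\notin S(\delta)$ and comparing with $M$ yields $c_i'\leqslant 4-\min_i c_i'$; together with the assumption that the $c_i'$ are not all equal, this already forces $\min_i c_i'=1$ or $\max_i c_i'=4$, cutting the possibilities down to a short explicit list.

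The last step, and the main obstacle, is to eliminate this residual list (the analogues of the cubic $(1,2)$ and $(1,3)$ cases). Here the crude maximum-degree bound no longer suffices—typically when some orbit has $c_i'=4$, so that $\Delta_i$ is isolated in $\Gamma_d(\alpha)$, or when some orbit has $c_i'=1$—and one must instead match the full degree sequences. The decisive refinement is to compute in $\Gamma_d(\beta_j)$ the exact degrees of both the $B_h$-vertices (each $\gamma\in B_h$ has $b_i'$ children in $\Delta_i$, and such a child survives iff $j\notin S$) and the surviving $\Delta$-vertices (whose extra neighbours are pinned down by $S(\delta)\cap S(\delta')=\emptyset$), and then to count how many vertices of each degree occur. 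Using the design-forced distribution of the sets $S(\delta)$, the equations $|\Delta_i|a_{i,j}=|\Delta_j|a_{j,i}$, integrality, and $c_i'+c_{i'}'\leqslant 4$, the number of top-degree vertices in $\Gamma_d(\beta_j)$ should come out strictly smaller than in $\Gamma_d(\alpha)$ in every configuration with unequal $c_i'$, contradicting the isomorphism. I expect the bookkeeping of how the $\Delta$-internal edges and the $B$-to-$\Delta$ edges redistribute when the base vertex moves from $\alpha$ to $\beta_j$ to be the most delicate part, exactly as the passage to $\Gamma_d(\beta)$ is in the cubic proof.
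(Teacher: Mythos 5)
Your first three steps are essentially the paper's own argument: the reduction to the non-$G$-distance-transitive case, the upgrade to $(G,s)$-arc-transitivity and a $2$-transitive (hence $2$-homogeneous) local action, the application of Theorem~\ref{theorem t-design}, the edge-counting identities $|\Delta_i|c_i'=k_sb_i'$ and $|\Delta_i|a_{i,j}=|\Delta_j|a_{j,i}$, and above all the comparison of induced degrees in $[\Gamma_d(\alpha)]$ and $[\Gamma_d(\beta_j)]$, which is exactly the engine of the paper's Case~\ref{case n=3} and Case~\ref{case n=2}. Your girth lemma, that adjacent $\delta,\delta'\in\Gamma_d(\alpha)$ satisfy $S(\delta)\cap S(\delta')=\emptyset$, is a clean sharpening of what the paper extracts from valency counting. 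One divergence: by applying Theorem~\ref{theorem t-design} with $t=2$ the paper excludes $c_i'=1$ at the outset, whereas your reduction leaves configurations such as $(c_1',c_2')=(1,2),(1,3),(1,4)$ alive, so your residual list is strictly longer than the paper's and each of those extra cases would still need its own elimination.

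The genuine gap is your final step. The claim that the number of top-degree vertices of $[\Gamma_d(\beta_j)]$ comes out strictly smaller than that of $[\Gamma_d(\alpha)]$ ``in every configuration with unequal $c_i'$'' is false, and the configurations where it fails are precisely those the paper also cannot kill locally. Take $n=3$, $b_1'=b_2'=b_3'=1$, $(c_1',c_2',c_3')=(2,2,4)$. Then $[\Gamma_d(\alpha)]$ has $4\cdot3^{d-2}$ vertices of degree $2$ and $3^{d-2}$ of degree $0$. In $[\Gamma_d(\beta_1)]$ the surviving $\Delta$-vertices (those $\delta$ with $1\notin S(\delta)$, and there are $2\cdot3^{d-2}$ of them) have degree exactly $2$, because any neighbour $\delta'$ of $\delta$ inside $\Gamma_d(\alpha)$ has $S(\delta')=X\setminus S(\delta)\ni 1$ and so does not survive; and a vertex $\gamma\in B_h$ ($h\neq1$) has degree equal to the number of indices $i\in\{1,2\}$ such that $1\notin S_i(\gamma)$, where $S_i(\gamma)$ is the block of its unique $\Delta_i$-neighbour. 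Nothing in your apparatus (designs, double counting, integrality, the disjointness lemma) forbids the aligned situation $S_1(\gamma)=S_2(\gamma)$ for all $\gamma$, and in that situation $[\Gamma_d(\beta_1)]$ has exactly $3^{d-2}$ vertices of degree $0$ and $4\cdot3^{d-2}$ of degree $2$: the degree sequences coincide and no contradiction arises. The same alignment phenomenon occurs for $n=2$, $(b_1',b_2')=(1,2)$, $(c_1',c_2')=(4,2)$ when $P_d(\alpha,\Delta_2)=[2]$; tellingly, the paper's remark after the proof disposes of the subcase $P_d(\alpha,\Delta_2)=[1,1]$ by precisely your kind of bookkeeping, but must leave the subcase $[2]$ to a different argument. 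For these residual cases the paper abandons local counting entirely: it computes $|V\Gamma|=11\cdot3^{d-2}-1$, invokes Lemma~2.6 of \cite{LiLuWangTetraSquarefreeOrder} to force $s\in\{2,3,4,7\}$, hence $|V\Gamma|\in\{32,98,296,8018\}$, and then quotes the census of tetravalent $2$-arc-transitive graphs on at most $512$ vertices \cite{4valent2at} and the classification of tetravalent arc-transitive graphs of order $2pq$ \cite{at-order-2pq} to conclude that no such graph exists (Case~\ref{case rest cases}). Your proposal contains no substitute for this global, classification-based input, so it is incomplete exactly where the real difficulty of the lemma lies.
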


\begin{proof}
If $\Gamma$ is $G$-distance-transitive, then $\Gamma$ is distance-regular. Suppose $\Gamma$ is not $G$-distance-transitive. Take any vertex $\alpha\in V(\Gamma)$. Then the vertex stabilizer $G_\alpha$ is transitive on $\Gamma_i(\alpha)$ for $1\leqslant i\leqslant s$ but not on $\Gamma_d(\alpha)$. Let $\Delta_1$, $\Delta_2$, $\ldots$, $\Delta_n$ be all the $G_\alpha$-orbits in $\Gamma_d(\alpha)$. By Corollary~\ref{cor s+1 orbit}, we have $b_s(\Gamma,\alpha)=3$ and $2\leqslant n\leqslant 3$. Since $g\geqslant 2d=2s+2$, by Lemma~\ref{lem alpha girth >=2s+2}, we have $(c_i(\Gamma,\alpha),a_i(\Gamma,\alpha),b_i(\Gamma,\alpha))=(1,0,3)$ for $1\leqslant i\leqslant s$. This implies that $\Gamma$ is $(G,s)$-arc-transitive, and so $\Gamma$ is $(G,2)$-arc-transitive since $s\geqslant 2$. Hence $G_\alpha$ is $2$-transitive on $\Gamma(\alpha)$ which implies $G_\alpha$ is $2$-homogeneous on $\Gamma(\alpha)$. By Theorem~\ref{theorem t-design}, for $1\leqslant i\leqslant n$, there is a $2$-$(4,c_i',x_i)$ design $D(\Delta_i)$ relative to the $G_\alpha$-orbit $\Delta_i$. There are exactly three $2$-designs on $4$ points (see Table~\ref{table 1-design for valency 4}): the $2$-$(4,2,1)$ design, the $3$-$(4,3,1)$ design and the $4$-$(4,4,1)$ design. Then $2\leqslant c_i'\leqslant 4$ for $1\leqslant i\leqslant n$. Let $\delta_i=|\Delta_i|$, $b_i'=b_s'(\Gamma,G_\alpha,\Delta_i)$, $c_i'=c_s'(\Gamma,G_\alpha,\Delta_i)$, and $a_{i,j}=\kappa(\Gamma,G_\alpha;\Delta_i,\Delta_j)$ for $1\leqslant i,j\leqslant n$.

Let $\kappa_s=|\Gamma_s(\alpha)|=4\cdot 3^{d-2}$ and let $1\leqslant i,j\leqslant n$ with $i\neq j$. Then by counting edges between $\Gamma_s(\alpha)$ and $\Delta_i$ we have $\kappa_s b_i'=\delta_i c_i'$, and by counting edges between $\Delta_i$ and $\Delta_j$ we have $\delta_i a_{i,j}=\delta_j a_{j,i}$.

In the following, we will show $c_1'=c_2'=\cdots=c_n'$ which implies that $\Gamma$ is distance-regular. We prove this by contrary. From now on, we suppose $|\{c_1',c_2',\ldots,c_n'\}|>1$. We use notations as in Section~\ref{sec Local actions and designs} and Section~\ref{sec Adjacency relations}, and we suggest the reader to draw pictures as in Figure~\ref{fig: Adjacency relations k4bi'1}.

Take $\beta=\beta_1$. Note that $\Gamma_d(\alpha)=\Delta_1\cup\Delta_2\cup\Delta_3$. So for any vertex $x\in \Gamma_d(\alpha)$, the number of neighbors of $x$ in $\Gamma_d(\alpha)$ is at most two, i.e. the induced degree $\deg_{[\Gamma_d(\alpha)]}(x)\leqslant 2$. So the induced degree on $[\Gamma_d(\beta)]$ is also $\leqslant 2$. Suppose for some $1\leqslant i\leqslant 3$, the intersection number $c_i'=3$. Then $B_2\cup B_3\cup B_4\cup (\Delta_i)_1'\subseteq \Gamma_d(\beta)$ where $(\Delta_i)_1'=\Delta_i\setminus (\Delta_i)_1=\Delta_i(\{2,3,4\})$. Let $y\in (\Delta_i)_1'$. Then for $2\leqslant j\leqslant 4$, the vertex $y$ has one neighbour in $B_j$. Thus $\deg_{[\Gamma_d(\beta)]}(y)\geqslant 3$, a contradiction. Hence $c_i'\in \{2,4\}$ for $1\leqslant i\leqslant n$.

\begin{case}\label{case n=3}The case $n=3$.\end{case}

\begin{figure}[htb]
\centering
\includegraphics[width=0.70\textwidth]{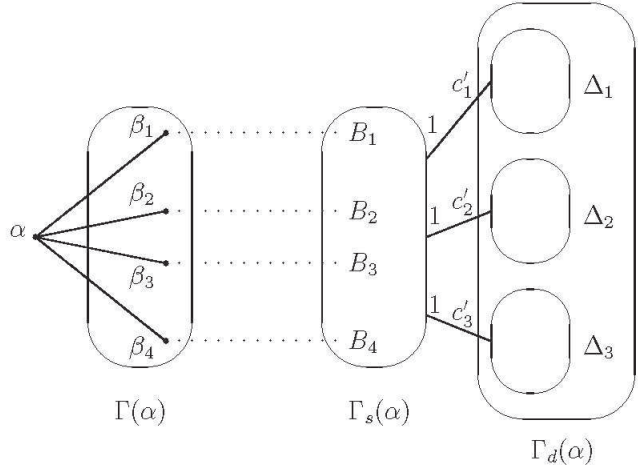}
\caption{Three $G_\alpha$-orbits $\Delta_1$, $\Delta_2$ and $\Delta_3$ in $\Gamma_d(\alpha)$.}
\label{fig: threeorbitsvalencyfour}
\end{figure}

Suppose $n=3$. Then $b_1'=b_2'=b_3'=1$ (see Figure~\ref{fig: threeorbitsvalencyfour}). Without loss of generality, we may assume $c_1'\leqslant c_2'\leqslant c_3'$. We only need to consider the following two cases $(c_1',c_2',c_3')=(2,4,4)$ or $(2,2,4)$.

Let $(c_1',c_2',c_3')=(2,4,4)$. Then $a_{3,1}=a_{2,1}=a_{1,2}=a_{1,3}=a_{2,3}=a_{3,2}=a_{2,2}=a_{3,3}=0$ and $a_{1,1}=2$. For any vertex $u\in V\Gamma$, let \begin{equation*}\partial_\Gamma^u(2)=\{x\in \Gamma_d(u)\mid \deg_{[\Gamma_d(u)]}(x)=2\}.\end{equation*} Since $\Gamma$ is vertex-transitive, the size $\partial_\Gamma(2)=|\partial_\Gamma^u(2)|$ is independent of $u$. We know $\partial_\Gamma^\alpha(2)=\Delta_1$. Now
\begin{eqnarray*}
\Gamma_d(\beta)&=&B_2\cup B_3\cup B_4\cup (\Delta_1)_1',\\
(\Delta_1)_1'&=&\Delta_1\setminus (\Delta_1)_1=\Delta_1(\{2,3\})\cup \Delta_1(\{2,4\})\cup \Delta_1(\{3,4\}),\\
(\Delta_1)_1&=&\Delta_1(\{1,2\})\cup \Delta_1(\{1,3\})\cup \Delta_1(\{1,4\}).
\end{eqnarray*}
Then $\partial_\Gamma^\beta(2)=(\Delta_1)_1'\subsetneqq \Delta_1$, which implies $\partial_\Gamma(2)=|\partial_\Gamma^\beta(2)|<|\partial_\Gamma^\alpha(2)|=\partial_\Gamma(2)$. This is a contradiction.

Let $(c_1',c_2',c_3')=(2,2,4)$. Then \begin{equation*}|V\Gamma|=11\cdot 3^{d-2}-1.\end{equation*} This case is left in Case~\ref{case rest cases}.

\begin{case}\label{case n=2}The case $n=2$.\end{case}

\begin{figure}[htb]
\centering
\includegraphics[width=0.80\textwidth]{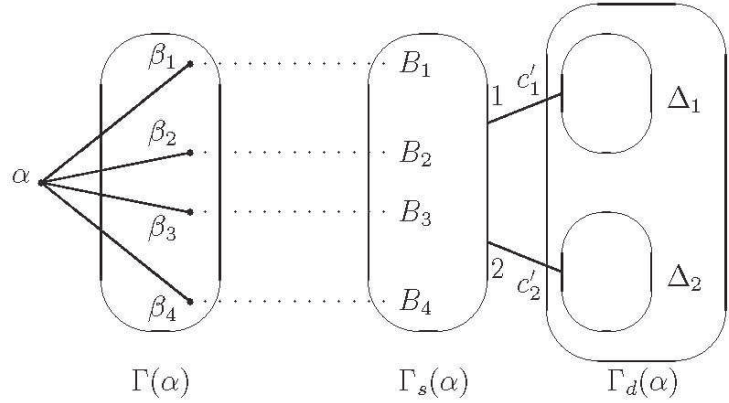}
\caption{Two $G_\alpha$-orbits $\Delta_1$ and $\Delta_2$ in $\Gamma_d(\alpha)$.}
\label{fig: twoorbitsvalencyfour}
\end{figure}

%
%%twoorbitsvalencyfour
%\newsavebox{\twoorbitsvalencyfour}
%\savebox{\twoorbitsvalencyfour}{
%\setlength{\unitlength}{1em}
%\begin{picture}(30,16)\label{pic twoorbitsvalencyfour}
%%\multiput(0,0)(1,0){13}{\line(0,1){9}}
%%\multiput(0,0)(0,1){10}{\line(1,0){12}}
%
%\put(3,8){\circle*{0.25}}
%\put(7,8){\oval(4,10)}
%\put(8,12){\circle*{0.25}}
%\put(8,9){\circle*{0.25}}
%\put(8,7){\circle*{0.25}}
%\put(8,4){\circle*{0.25}}
%\qbezier(3,8)(7.99,12.01)(8,12)
%\qbezier(3,8)(7.99,9.01)(8,9)
%\qbezier(3,8)(7.99,6.99)(8,7)
%\qbezier(3,8)(7.99,3.99)(8,4)
%\put(2,8){$\alpha$}
%\put(6.6,12){$\beta_1$}
%\put(6.6,9.4){$\beta_2$}
%\put(6.6,6.2){$\beta_3$}
%\put(6.6,3.5){$\beta_4$}
%\put(6,1){$\Gamma(\alpha)$}
%\put(16,8){\oval(4,10)}
%\put(15,1){$\Gamma_s(\alpha)$}
%\put(15,11.7){$B_1$}
%\put(15,8.7){$B_2$}
%\put(15,6.7){$B_3$}
%\put(15,3.7){$B_4$}
%\put(22,5){\oval(3,4)}
%\put(22,11){\oval(3,4)}
%\put(24,4.5){$\Delta_2$}
%\put(24,10.5){$\Delta_1$}
%\put(23,8){\oval(6,12)}
%\put(22,1){$\Gamma_d(\alpha)$}
%\multiput(8.5,12)(0.5,0){13}{\circle*{0.10}}
%\multiput(8.5,9)(0.5,0){13}{\circle*{0.10}}
%\multiput(8.5,7)(0.5,0){13}{\circle*{0.10}}
%\multiput(8.5,4)(0.5,0){13}{\circle*{0.10}}
%\qbezier(18,10)(20.49,10.99)(20.5,11)
%\qbezier(18,6)(20.49,4.99)(20.5,5)
%\put(18.1,10.6){$1$}
%\put(18.1,4.8){$2$}
%\put(19,11.2){$c_1'$}
%\put(19,4.3){$c_2'$}
%\end{picture}}
%
%\begin{figure}[ht]
%\centering
%\usebox{\twoorbitsvalencyfour}
%\caption{Two $G_\alpha$-orbits $\Delta_1$ and $\Delta_2$ in $\Gamma_d(\alpha)$.}
%\label{fig: twoorbitsvalencyfour}
%\end{figure}

Suppose $n=2$. Then $\{b_1',b_2'\}=\{1,2\}$. Without loss of generality, we may assume $b_1'=1$ and $b_2'=2$ (see Figure~\ref{fig: twoorbitsvalencyfour}). We only need to consider the following two cases $(c_1',c_2')=(2,4)$ or $(4,2)$.

Let $(c_1',c_2')=(2,4)$. Then $a_{2,2}=a_{2,1}=a_{1,2}=0$ and $a_{1,1}=2$. For any vertex $x\in \Gamma_d(\alpha)$, the induced degree $\deg_{[\Gamma_d(\alpha)]}(x)=0$ or $2$. So the induced degree on $[\Gamma_d(\beta)]$ is also $0$ or $2$. Now
\begin{eqnarray*}
\Gamma_d(\beta)&=&B_2\cup B_3\cup B_4\cup (\Delta_1)_1',\\
(\Delta_1)_1'&=&\Delta_1\setminus (\Delta_1)_1=\Delta_1(\{2,3\})\cup \Delta_1(\{2,4\})\cup \Delta_1(\{3,4\}).
\end{eqnarray*}
Let $S\in \{\{2,3\},\{2,4\},\{3,4\}\}$ and let $j\in S$ with $2\leqslant j\leqslant 4$. Take a vertex $y\in B_j(S)_{\Delta_1}$. Then $y$ has only one neighbor in $\Gamma_d(\beta)$ (more precisely, the neighbor is in $\Delta_1(S)$). So $\deg_{[\Gamma_d(\beta)]}(y)=1$, a contradiction.

Let $(c_1',c_2')=(4,2)$. Then \begin{center}$|V\Gamma|=11\cdot 3^{d-2}-1$.\end{center} This case is left in Case~\ref{case rest cases}.

%Let $(c_1',c_2')=(4,2)$. Then $a_{1,1}=a_{1,2}=a_{2,1}=0$ and $a_{2,2}=2$. For any vertex $x\in \Gamma_d(\alpha)$, the induced degree $\deg_{[\Gamma_d(\alpha)]}(x)=0$ or $2$. So the induced degree on $[\Gamma_d(\beta)]$ is also $0$ or $2$. Remember that $P_d(\alpha,\Delta_2)=[1,1]$ or $[2]$. First we suppose $P_d(\alpha,\Delta_2)=[1,1]$. The adjacency relations between $\Gamma_s(\alpha)$ and $\Delta_2$ are in Figure~\ref{fig: adjacencyrelationskfourPoneone}. Now
%\begin{eqnarray*}
%\Gamma_d(\beta)&=&B_2\cup B_3\cup B_4\cup (\Delta_2)_1',\\
%(\Delta_2)_1'&=&\Delta_2\setminus (\Delta_2)_1=\Delta_2(\{2,3\})\cup \Delta_2(\{2,4\})\cup \Delta_2(\{3,4\}).
%\end{eqnarray*}
%We use notations as in the discussion before Figure~\ref{fig: adjacencyrelationskfourPoneone}. Let $\{j,D,E\}=\{2,3,4\}$. Let $y\in B_j(D_j,E_j)$. Then $y$ has two neighbors in $\Gamma_d(\beta)$ with one in $\Delta_{D_j}(D_j,E_j)$ and the other in $\Delta_{E_j}(D_j,E_j)$. So the induced degree $\deg_{[\Gamma_d(\beta)]}(y)=2$. Let $z\in B_j(1_j,D_j)$ (or $B_j(1_j,E_j)$). Then $z$ has only one neighbor in $\Gamma_d(\beta)$ (more precisely, the neighbor is in $\Delta_{D_j}(1_j,D_j)$ (or $\Delta_{E_j}(1_j,E_j)$, respectively)). So the induced degree $\deg_{[\Gamma_d(\beta)]}(z)=1$. This is a contradiction. Now we suppose $P_d(\alpha,\Delta_2)=[2]$. Then $|V\Gamma|=11\cdot 3^{d-2}-1$. This subcase is left in Case~\ref{case rest cases}.

\begin{case}\label{case rest cases}The case $|V\Gamma|=11\cdot 3^{d-2}-1$.\end{case}

\begin{table}[!ht]
\centering
    \begin{tabular}{|c|c|c|c|c|}
        \hline
        $d$ & $3$ & $4$ & $5$ & $8$ \\
        \hline
        $|V\Gamma|$ & $32$ & $98$ & $296$ & $8018$ \\
        \hline
    \end{tabular}
    \caption{The diameter $d$ and the order of the graph $\Gamma$}\label{table diameter and order tetravalent 2AT}
\end{table}

The graph $\Gamma$ is $(G,s)$-arc-transitive with order $|V\Gamma|=11\cdot 3^{d-2}-1$. By Lemma~2.6 in~\cite{LiLuWangTetraSquarefreeOrder}, we have $s\in \{2,3,4,7\}$. The diameter $d$ and the order of $\Gamma$ are in Table~\ref{table diameter and order tetravalent 2AT}. By the list of $2$-arc-transitive tetravalent graphs with at most $512$ vertices~\cite{4valent2at}, the graph $\Gamma$ with order in $\{32,98,296\}$ doesn't exist. So we have $|V\Gamma|=8018=2\cdot 19\cdot 211$. By the list of arc-transitive tetravalent graphs of order $2pq$~\cite{at-order-2pq}, no such graphs exist.
\end{proof}

In the proof of Lemma~\ref{lem 4ndt girth at least 2d is DR}, the case $(c_1',c_2')=(4,2)$ in Case~\ref{case n=2} has two subcases: $P_d(\alpha,\Delta_2)=[1,1]$ or $[2]$. The subcase $P_d(\alpha,\Delta_2)=[1,1]$ can be dealt with in the following paragraph by the adjacency relations in Subsection~\ref{subsec Ps+1alphaDeltaismb}.

Suppose $(c_1',c_2')=(4,2)$ and $P_d(\alpha,\Delta_2)=[1,1]$. Then $a_{1,1}=a_{1,2}=a_{2,1}=0$ and $a_{2,2}=2$. For any vertex $x\in \Gamma_d(\alpha)$, the induced degree $\deg_{[\Gamma_d(\alpha)]}(x)=0$ or $2$. So the induced degree on $[\Gamma_d(\beta)]$ is also $0$ or $2$. The adjacency relations between $\Gamma_s(\alpha)$ and $\Delta_2$ are in Figure~\ref{fig: adjacencyrelationskfourPoneone}. Hence
\begin{eqnarray*}
\Gamma_d(\beta)&=&B_2\cup B_3\cup B_4\cup (\Delta_2)_1',\\
(\Delta_2)_1'&=&\Delta_2\setminus (\Delta_2)_1=\Delta_2(\{2,3\})\cup \Delta_2(\{2,4\})\cup \Delta_2(\{3,4\}).
\end{eqnarray*}
We use notations as in the discussion before Figure~\ref{fig: adjacencyrelationskfourPoneone}. Let $\{j,D,E\}=\{2,3,4\}$. Let $y\in B_j(D_j,E_j)$. Then $y$ has two neighbors in $\Gamma_d(\beta)$ with one in $\Delta_{D_j}(D_j,E_j)$ and the other in $\Delta_{E_j}(D_j,E_j)$. So the induced degree $\deg_{[\Gamma_d(\beta)]}(y)=2$. Let $z\in B_j(1_j,D_j)$. Then $z$ has only one neighbor in $\Gamma_d(\beta)$ (more precisely, the neighbor is in $\Delta_{D_j}(1_j,D_j)$. Let $w\in B_j(1_j,E_j)$. Then $w$ has only one neighbor in $\Gamma_d(\beta)$ (more precisely, the neighbor is in $\Delta_{E_j}(1_j,E_j)$). So the induced degree is $\deg_{[\Gamma_d(\beta)]}(z)=\deg_{[\Gamma_d(\beta)]}(w)=1$. This is a contradiction.

Tetravalent distance-regular graphs were classified in~\cite{4dr}, and these graphs are either distance-transitive~\cite{Smith Distance-transitive graphs of valency four,Smith On bipartite tetravalent graphs,Smith On tetravalent graphs} or the incidence graph of generalized quadrangle $GQ(3,3)$ and the flag graph of generalized hexagon $GH(2,2)$ which are not vertex transitive. So a vertex-transitive and distance-regular graph with valency four must be distance-transitive.

\begin{corollary}\label{cor 4ndt is DT}
Let $\Gamma$ be a tetravalent graph with diameter $d\geqslant 3$ and girth $g\geqslant 2d$. Let $s=d-1$ and let $G\leqslant \Aut(\Gamma)$ such that $\Gamma$ is $(G,s)$-distance-transitive. Then $\Gamma$ is distance-transitive.
\end{corollary}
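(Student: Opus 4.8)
The plan is to deduce this corollary directly from Lemma~\ref{lem 4ndt girth at least 2d is DR} together with the classification of tetravalent distance-regular graphs, in exact parallel with the cubic argument that gives Corollary~\ref{cor 3ndt is DT}. The essential point is that all of the combinatorial work has already been carried out in the lemma, so at the level of the corollary there is nothing substantial left to compute; the argument is a two-line combination of an internal result with an external classification.

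First I would observe that the hypotheses of the corollary (tetravalent, $d\geqslant 3$, $g\geqslant 2d$, $s=d-1$, and $\Gamma$ being $(G,s)$-distance-transitive) are verbatim those of Lemma~\ref{lem 4ndt girth at least 2d is DR}. Applying that lemma immediately yields that $\Gamma$ is distance-regular. Next I would note that $(G,s)$-distance-transitivity forces $G$ to be transitive on the ordered pairs at distance $1$, i.e. on the arcs, and since $\Gamma$ is connected this gives $G$-vertex-transitivity (this is the same vertex-transitivity already used freely in the proofs of Lemma~\ref{lem s-dt properties of intersection numbers} and Lemma~\ref{lem 4ndt girth at least 2d is DR}). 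Hence $\Gamma$ is a vertex-transitive, tetravalent, distance-regular graph.

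It then remains to invoke the classification of tetravalent distance-regular graphs from~\cite{4dr}, as recorded in the discussion immediately preceding this corollary: every such graph is either distance-transitive~\cite{Smith Distance-transitive graphs of valency four,Smith On bipartite tetravalent graphs,Smith On tetravalent graphs} or is one of the two exceptions, namely the incidence graph of the generalized quadrangle $GQ(3,3)$ and the flag graph of the generalized hexagon $GH(2,2)$, and these two exceptions are not vertex-transitive. Since we have just shown $\Gamma$ is vertex-transitive, it cannot be either exception, and therefore $\Gamma$ is distance-transitive, as required.

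The only place that calls for care is the appeal to the external classification: one must be certain that the two exceptional distance-regular graphs genuinely fail to be vertex-transitive, so that vertex-transitivity truly forces distance-transitivity. This is precisely the statement extracted in the paragraph just above the corollary, so no new obstacle arises here; all the genuine difficulty has been absorbed into Lemma~\ref{lem 4ndt girth at least 2d is DR} and into the cited classifications, and the corollary itself follows at once.
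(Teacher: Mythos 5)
Your proposal is correct and follows exactly the paper's route: the paper derives this corollary by combining Lemma~\ref{lem 4ndt girth at least 2d is DR} with the observation (stated in the paragraph immediately preceding the corollary) that the classification in~\cite{4dr} leaves only distance-transitive graphs once vertex-transitivity is known, since the two exceptions $GQ(3,3)$ and $GH(2,2)$ are not vertex-transitive. Your additional remark that transitivity on ordered pairs at distance~$1$ yields vertex-transitivity is a sound justification of a fact the paper uses implicitly.
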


%Magma codes of cubic distance-transitive graphs can be found on Marston Conder's homepage~\cite{MarstonHomepage}. Let $\Gamma$ be a cubic distance-transitive graph with diameter $d\geqslant 2$ such that there is an automorphism group $G\leqslant \Aut(\Gamma)$ satisfying $\Gamma$ is $(G,s)$-distance-transitive but not $G$-distance-transitive where $s=d-1$. By Magma~\cite{Magma}, we can check that the only possible of such a graph is $\Gamma=K_{3,3}$ with automorphism group $G=C_3\wr C_2<\Aut(\Gamma)=\S_3\wr C_2$. Here $C_n$ is a cyclic group of order $n$, $\S_n$ is the symmetric group on $n$ letters and the symbol $\wr$ means the wreath product. By the proof of Lemma~\ref{lem 3ndt is DR} and this discussion, we have the following result.
%
%
%%A graph $\Gamma$ with diameter $d\geqslant 2$ is called a $G$-near distance-transitive graph if $\Gamma$ is a $(G,d-1)$-distance transitive graph where $G\leqslant \Aut(\Gamma)$. By the above theorem, we have the following corollary.
%
%
%\begin{theorem}\label{thm 3ndt girth at most 2d-1}
%Let $\Gamma$ be a tetravalent graph with diameter $d\geqslant 3$ and girth $g$. Let $s=d-1$ and let $G\leqslant \Aut(\Gamma)$ such that $\Gamma$ is $(G,s)$-distance-transitive but not $G$-distance-transitive. Then $g\leqslant 2d-1$.
%\end{theorem}

\begin{theorem}\label{thm 4ndt girth at most 2d-1}
Let $\Gamma$ be a tetravalent graph with diameter $d\geqslant 3$ and girth $g$. Let $s=d-1$ and let $G\leqslant \Aut(\Gamma)$ such that $\Gamma$ is $(G,s)$-distance-transitive but not $G$-distance-transitive. Then $g\leqslant 2d-1$.
\end{theorem}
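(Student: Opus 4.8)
The plan is to mirror the proof of the cubic case (Theorem~\ref{thm 3ndt girth at most 2d-1}) and argue by contradiction. Suppose $g\geqslant 2d$. Then $\Gamma$ satisfies the hypotheses of Lemma~\ref{lem 4ndt girth at least 2d is DR} and of Corollary~\ref{cor 4ndt is DT}, so $\Gamma$ is distance-regular and in fact distance-transitive, meaning $\Aut(\Gamma)$ is transitive on pairs at each distance up to $d$. Thus $\Gamma$ is a tetravalent distance-transitive graph with $d\geqslant 3$ and $g\geqslant 2d$ carrying a vertex-transitive subgroup $G\leqslant \Aut(\Gamma)$ that is $(G,d-1)$-distance-transitive but not $G$-distance-transitive. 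I will show that no such graph exists, which forces $g\leqslant 2d-1$.

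The first step is to pin down the finite candidate list. Since $\Gamma$ is distance-regular with $g\geqslant 2d$, Lemma~\ref{lem alpha girth >=2s+2} gives $c_i=1$ and $a_i=0$ for all $i\leqslant d-1$, so the bipartite possibilities are exactly incidence graphs of thick generalized $n$-gons, while a non-bipartite (Moore-type) possibility would require a tetravalent Moore graph, which does not exist for $d\geqslant 3$. By the Feit--Higman theorem a thick generalized $n$-gon exists only for $n\in\{2,3,4,6,8\}$, and $4$-regularity of the incidence graph forces order $(s,t)=(3,3)$; since no generalized octagon has equal parameters, the surviving candidates with $d\geqslant 3$ are the incidence graphs of the projective plane $PG(2,3)$ (diameter $3$), of the generalized quadrangle $GQ(3,3)$ (diameter $4$), and of the split Cayley hexagon $GH(3,3)$ of order $3$ (diameter $6$). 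This list can equally be extracted from the classification of tetravalent distance-regular graphs in~\cite{4dr} together with the generalized-polygon background in~\cite{BCN}.

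The second step is to eliminate each candidate. The incidence graph of $GQ(3,3)$ is not vertex-transitive (as already noted before Corollary~\ref{cor 4ndt is DT}), so it cannot be $(G,s)$-distance-transitive and is discarded at once. For the incidence graph of $PG(2,3)$ with $d=3$ and $s=2$, being $(G,2)$-distance-transitive forces the bipartition-preserving subgroup $G\cap \PGL(3,3)$ to be $2$-transitive on the $13$ points; the only such subgroup is $\PGL(3,3)$ itself, which together with the duality gives the full automorphism group, and this is distance-transitive. Hence every vertex-transitive $(G,2)$-distance-transitive $G$ is already $G$-distance-transitive, contradicting the hypothesis. The remaining graph, the incidence graph of $GH(3,3)$, is handled on the same principle: one must verify that any vertex-transitive subgroup transitive on pairs at each distance up to $5$ is already transitive at distance $6$. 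The main obstacle is precisely this hexagon case, since it requires controlling the subgroups of $\Aut$ of the incidence graph of $GH(3,3)$ built from $G_2(3)$; establishing the candidate list is routine once the generalized-polygon characterization is in hand, and the projective-plane case is a short $2$-transitivity argument, but the hexagon elimination is most cleanly carried out by a Magma computation paralleling the cubic verification, after which the contradiction is complete.
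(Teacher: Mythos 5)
Your overall strategy (reduce via Lemma~\ref{lem 4ndt girth at least 2d is DR} and Corollary~\ref{cor 4ndt is DT} to a finite list of tetravalent distance-transitive graphs with $g\geqslant 2d$, then eliminate each candidate) is the same as the paper's first proof, but your derivation of the candidate list contains a genuine error that loses a candidate. You claim that a non-bipartite candidate would have to be a tetravalent Moore graph. That dichotomy is only valid when $g=2d+1$; since the hypothesis here is $g\geqslant 2d$, a non-bipartite distance-regular graph with girth exactly $2d$ is possible, and one actually exists: the odd graph $O_4$ (the Kneser graph on the $3$-subsets of a $7$-set), which is tetravalent, distance-transitive (hence vertex-transitive, with automorphism group $\S_7$), has diameter $d=3$, girth $6=2d$, intersection array $\{4,3,3;1,1,2\}$, and is not bipartite (its $a_3=2\neq 0$). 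It satisfies every constraint you derive, yet it is absent from your list of generalized-polygon incidence graphs. This is precisely why the paper's candidate list is $\{26,35,728\}$: order $35$ is $O_4$, while the order-$80$ incidence graph of $GQ(3,3)$ is the one dropped for failing vertex-transitivity, as you also note.

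Consequently your contradiction is incomplete: you must still show that $O_4$ admits no subgroup $G\leqslant \S_7$ that is $(G,2)$-distance-transitive but not $G$-distance-transitive. The paper handles this case (together with order $26$) by a Magma check against Poto\v{c}nik's census of tetravalent $2$-arc-transitive graphs of order at most $512$~\cite{4valent2at}; note that the proof of Lemma~\ref{lem 4ndt girth at least 2d is DR} guarantees that any such $G$ makes $\Gamma$ $(G,2)$-arc-transitive, so the census applies. If you repair the list (for instance by citing the classification of tetravalent distance-regular graphs~\cite{4dr} directly, as the paper does, rather than relying on the bipartite/Moore dichotomy) and then eliminate $O_4$ by such a check or by a direct argument about subgroups of $\S_7$, the remainder of your argument (the $2$-transitivity argument for the incidence graph of $PG(2,3)$ and the Magma elimination of the incidence graph of $GH(3,3)$) would go through and is in line with what the paper does.
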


%\begin{theorem}\label{thm 4ndt girth at most 2d-1}
%Let $d\geqslant 3$ and $s=d-1$. Let $\Gamma$ be a tetravalent $(G,s)$-distance-transitive graph with diameter $d$ and girth $g$. Then $g\leqslant 2d-1$.
%\end{theorem}

\begin{proof}
We prove this by contrary. Suppose $g\geqslant 2d$. By Lemma~\ref{lem 4ndt girth at least 2d is DR}, the graph $\Gamma$ is distance-regular with intersection array
\begin{equation*}
\iota(\Gamma)=\left\{\begin{array}{cccccc}
               *     & 1 & 1 & \cdots & 1 & c_d\\
               0 & 0 & 0 & \cdots & 0 & a_d\\
               4 & 3 & 3 & \cdots & 3 & *
               \end{array}\right\}.
\end{equation*}
By the classification of distance-regular graphs of valency four~\cite{4dr}, the order of $\Gamma$ is in $\{26,35,728\}$. Magma code of tetravalent $2$-arc-transitive graph of order $26$ or $35$ can be found on Primo\v{z} Poto\v{c}nik's homepage~\cite{PrimozPotocnikHomepage}.

The graph of order $728$ is denoted by $\Gamma_{728}$. The graph $\Gamma_{728}$ is $7$-arc-transitive, and it is the incidence graph of the known generalized hexagon $GH(3,3)$ with full automorphism group $\G_2(3).2$ and stabilizer of order $11664$. The almost simple group $\G_2(3).2$ with socle $\G_2(3)$ is in the database of almost simple groups of Magma. The group $\G_2(3).2$ has only one conjugate class of subgroups of order $11664$. The graph $\Gamma_{728}$ is isomorphic to the orbital graph with arc-transitive group $\G_2(3).2$ and an orbit of length four under the action of the stabilizer, and so it can be constructed by Magma.

By Magma~\cite{Magma}, we can check that the graph $\Gamma$ of order in $\{26,35,728\}$ doesn't possess an automorphism group $G\leqslant \Aut(\Gamma)$ such that $\Gamma$ is $(G,s)$-distance-transitive but not $G$-distance-transitive.
%
%Then
%\begin{equation*}
%|V\Gamma|=\left(6+\frac{12}{c_d}\right)\cdot 3^{d-2}-1=\left\{
%\begin{array}{ll}
%18\cdot 3^{d-2}-1, & \text{if $c_d=1$};\\
%12\cdot 3^{d-2}-1, & \text{if $c_d=2$};\\
%10\cdot 3^{d-2}-1, & \text{if $c_d=3$};\\
%9\cdot 3^{d-2}-1, & \text{if $c_d=4$}.\\
%\end{array}\right.
%\end{equation*}
%By Lemma~2.6 in~\cite{LiLuWangTetraSquarefreeOrder}, we have $s\in \{2,3,4,7\}$ and $d\in \{3,4,5,8\}$. The possible order of $\Gamma$ are in Table~\ref{table order of dr tetravalent 2AT}. By the classification of tetravalent distance-regular graphs~\cite{4dr}, the order $|V\Gamma|\leqslant 728$. Then by Table~\ref{table order of dr tetravalent 2AT}, we have $d\neq 8$, i.e. $d\in \{3,4,5\}$. Note that $\Gamma$ is $2$-arc-transitive. By the list of $2$-arc-transitive tetravalent graphs with at most $512$ vertices~\cite{4valent2at}, the graph $\Gamma$ with $d\in \{3,4,5\}$ and order in Table~\ref{table order of dr tetravalent 2AT} doesn't exist.
\end{proof}

%\paragraph{Another proof of Theorem~\ref{thm 4ndt girth at most 2d-1}.}

The following is another proof of Theorem~\ref{thm 4ndt girth at most 2d-1}.

\begin{proof}
We prove this by contrary. Suppose $g\geqslant 2d$. By Lemma~\ref{lem 4ndt girth at least 2d is DR}, the graph $\Gamma$ is $(G,s)$-arc-transitive, and it is distance-regular with intersection array
\begin{equation*}
\iota(\Gamma)=\left\{\begin{array}{cccccc}
               *     & 1 & 1 & \cdots & 1 & c_d\\
               0 & 0 & 0 & \cdots & 0 & a_d\\
               4 & 3 & 3 & \cdots & 3 & *
               \end{array}\right\}.
\end{equation*}
Then
\begin{eqnarray*}
|V\Gamma|&=&\sum\limits_{i=0}^d|\Gamma_i(\alpha)|\\
&=&1+4+4\cdot 3+\cdots + 4\cdot 3^{d-2}+\frac{3}{c_d}\cdot 4\cdot 3^{d-2}\\
&=&\left(6+\frac{12}{c_d}\right)\cdot 3^{d-2}-1\\
&=&\left\{
\begin{array}{ll}
18\cdot 3^{d-2}-1, & \text{if $c_d=1$};\\
12\cdot 3^{d-2}-1, & \text{if $c_d=2$};\\
10\cdot 3^{d-2}-1, & \text{if $c_d=3$};\\
9\cdot 3^{d-2}-1, & \text{if $c_d=4$}.\\
\end{array}\right.
\end{eqnarray*}
Since $\Gamma$ is not $G$-distance-transitive, it is not $(G,d)$-arc-transitive. So $\Gamma$ is $(G,s)$-transitive. By Lemma~2.6 in~\cite{LiLuWangTetraSquarefreeOrder}, we have $s\in \{2,3,4,7\}$ and $d=s+1\in \{3,4,5,8\}$. The possible order of $\Gamma$ are in Table~\ref{table order of dr tetravalent 2AT}. By the classification of tetravalent distance-regular graphs~\cite{4dr}, the order $|V\Gamma|\leqslant 728$. Then by Table~\ref{table order of dr tetravalent 2AT}, we have $d\neq 8$, i.e. $d\in \{3,4,5\}$. Note that $\Gamma$ is $2$-arc-transitive. By the list of $2$-arc-transitive tetravalent graphs with at most $512$ vertices~\cite[Table~3 on pages 1334-1335]{4valent2at}, the $(G,s)$-transitive graph $\Gamma$ with $d\in \{3,4,5\}$ and order in Table~\ref{table order of dr tetravalent 2AT} doesn't exist.
\end{proof}

\begin{table}[!ht]
\centering
    \begin{tabular}{|c|c|c|c|c|}
        \hline
         $|V\Gamma|$ & $d=3$ & $d=4$ & $d=5$ & $d=8$ \\
        \hline
        $c_d=1$ & $53$ & $161$ & $485$ & $13121$ \\
        \hline
        $c_d=2$ & $35$ & $107$ & $323$ & $8747$ \\
        \hline
        $c_d=3$ & $29$ & $89$ & $269$ & $7289$ \\
        \hline
        $c_d=4$ & $26$ & $80$ & $242$ & $6560$ \\
        \hline
    \end{tabular}
    \caption{The possible order of the graph $\Gamma$ with girth $g\geqslant 2d$.}\label{table order of dr tetravalent 2AT}
\end{table}


\begin{thebibliography}{99}


%\bibitem{a} Hui Zhou, a

%\bibitem{}



\bibitem{Cubic distance-regular graphs} N. L. Biggs, A. G. Boshier, J. Shawe-Taylor, Cubic distance-regular graphs, J. London Math. Soc. (2) 33 (1986), no. 3, 385-394.

\bibitem{Biggs and Smith On trivalent graphs} N.~L.~Biggs and D.~H.~Smith, On trivalent graphs, Bull. London Math. Soc., 3 (1971), 155-158.

\bibitem{Biggs-AGT} N.~L.~Biggs, Algebraic graph theory (Second Edition), Cambridge University Press, New York, (1993).

\bibitem{Magma} W.~Bosma, J.~Cannon and C.~Playoust, The MAGMA algebra system. I. The user language, J. Symb. Comput., 24 (1997) 235-265.

\bibitem{BCN} A.~E.~Brouwer, A.~M.~Cohen and A.~Neumaier, Distance-regular graphs, Springer Verlag, Berlin, Heidelberg, New York, (1989).


\bibitem{at-order-2pq} Katja Ber\v{c}i\v{c} and Mohsen Ghasemi, Tetravalent arc-transitive graphs of order twice a product of two primes, Discrete Math., 312 (2012), 3643-3648.

\bibitem{4dr} A.~E.~Brouwer and J. H. Koolen, The distance-regular graphs of valency four, J. Algebraic Combin., 10 (1995), 5-24.

\bibitem{MarstonHomepage} Marston~Conder, \newline {\small \url{https://www.math.auckland.ac.nz/~conder/symmcubic10000list.txt}}.

\bibitem{Lane t-designs and t-homogeneous groups} R.~N.~Lane, $t$-designs and $t$-ply homogeneous groups, J. Combin. Theory Ser. A, 10 (1971), 106-118.

\bibitem{LiLuWangTetraSquarefreeOrder} C.~H.~Li, Z.~P.~Lu, and G.~X.~Wang, The vertex-transitive and edge-transitive tetravalent graphs of square-free order, J. Algebraic Combin., 42 (2015) 25¨C50.

\bibitem{4valent2at} Primo\v{z} Poto\v{c}nik, A list of $4$-valent $2$-arc-transitive graphs and finite faithful amalgams of index $(4,2)$, European J. Combin., 30 (2009), 1323-1336.

\bibitem{PrimozPotocnikHomepage} Primo\v{z} Poto\v{c}nik,\newline \url{https://www.fmf.uni-lj.si/~potocnik/work.htm}.


\bibitem{Smith Distance-transitive graphs of valency four} D.~H.~Smith, Distance-transitive graphs of valency four, J. London Math. Soc., (2) 8 (1974), 377-384.

\bibitem{Smith On bipartite tetravalent graphs} D.~H.~Smith, On bipartite tetravalent graphs, Discrete Math., 10 (1974), 167-172.

\bibitem{Smith On tetravalent graphs} D.~H.~Smith, On tetravalent graphs, J. London Math. Soc., (2) 6 (1973), 659-662.


\bibitem{Combinatorial Designs Constructions and Analysis} D.~R.~Stinson, Combinatorial designs: constructions and analysis, Springer-Verlag, New York, 2004.



\end{thebibliography}
\end{document}